\date{}
\def\nd{\noindent}
\def\thend{\rule{3mm}{3mm}}
\newtheorem{theorem}{Theorem}[section]
\newtheorem{prop}{Proposition}[section]
\newtheorem{lem}{Lemma}[section]
\newtheorem{rmk}{Remark}[section]
\newcommand{\Int}{\displaystyle\int_{\Rn}}
\newcommand{\2}{2^*}
\newcommand{\Rn}{\mathbb{R}^N}
\begin{document}
\title[Choquard equations via nonlinear Rayleigh quotient] {Choquard equations via nonlinear Rayleigh quotient for concave-convex nonlinearities}
\vspace{1cm}

\author{M. L. M. Carvalho}
\address{M. L. M. Carvalho \newline Universidade Federal de Goias, IME, Goi\^ania-GO, Brazil }
\email{\tt marcos$\_$leandro$\_$carvalho@ufg.br}

\author{Edcarlos D. da Silva}
\address{Edcarlos D da Silva \newline  Universidade Federal de Goias, IME, Goi\^ania-GO, Brazil}
\email{\tt edcarlos@ufg.br}

\author{C. Goulart}
\address{C. Goulart \newline Universidade Federal de Jata\'\i, Jata\'\i-GO, Brazil }
\email{\tt claudiney@ufg.br}

\subjclass[2010]{35A01 ,35A15,35A23,35A25} 

\keywords{Choquard equation, concave-convex nonlinearities, Nehari method, Nonlinear Rayleigh quotient}
\thanks{The second author was partially supported by CNPq/Universal 2018 with grant 429955/2018-9}

\begin{abstract}
	It is established existence of ground and bound state solutions for Choquard equation considering concave-convex nonlinearities in the following form
	\begin{equation*}
	\left\{
	\begin{array}{rcl}
	-\Delta u  +V(x) u &=&  (I_\alpha* |u|^p)|u|^{p-2}u+ \lambda |u|^{q-2}u \, \mbox{ in }\,    \mathbb{R}^N,  \\
	u\in H^1(\Rn)&&
	\end{array}
	\right.
	\end{equation*}
	where $\lambda > 0, N \geq 3, \alpha \in (0, N)$. The potential $V$ is a continuous function and $I_\alpha$ denotes the standard Riesz potential. Assume also that $1 < q < 2$, $2_\alpha < p < 2^*_\alpha$ where $2_\alpha=(N+\alpha)/N$, $ \2_\alpha=(N+\alpha)/(N-2)$. Our main contribution is to consider a specific condition on the parameter $\lambda > 0$ taking into account the nonlinear Rayleigh quotient. More precisely, there exists $\lambda_n > 0$ such that our main problem admits at least two positive solutions for each $\lambda \in (0, \lambda_n]$. In order to do that we combine Nehari method with a fine analysis on the nonlinear Rayleigh quotient. The parameter $\lambda_n > 0$ is optimal in some sense which allow us to apply the Nehari method.  
\end{abstract}

\maketitle

\section{Introduction}

It is well known that existence, nonexistence and multiplicity of solutions for nonlocal elliptic problems are related with the behavior for nonlinear term at the origin and at infinity. In this work we shall consider semilinear elliptic problems driven by the Choquard equation described in the following form:
\begin{equation}\label{eq1}
\left\{
\begin{array}{rcl}
-\Delta u  +V(x) u &=&  (I_\alpha* |u|^p)|u|^{p-2}u+ \lambda |u|^{q-2}u \, \mbox{ in }\,    \mathbb{R}^N,  \\
u\in H^1(\Rn)&&
\end{array}
\right.
\end{equation}
where $\lambda > 0, N \geq 3, \alpha \in (0, N)$. The potential $V$ is a continuous function and $I_\alpha$ denotes the standard Riesz potential. Assume also that $1 < q < 2$, $2_\alpha < p < 2^*_\alpha$ where $2_\alpha=(N+\alpha)/N$, $ \2_\alpha=(N+\alpha)/(N-2)$. Later on, we shall consider hypotheses on $V$ and $\lambda$. 
Recall that the Riesz potential can be described in the following form
$$I_\alpha(x)=\dfrac{A_\alpha(N)}{|x|^{N-\alpha}}, x \in \mathbb{R}^N \,\, \mbox{and} \, \,A_\alpha(N)=\dfrac{\Gamma(\frac{N-\alpha}{2})}{\Gamma(\frac{\alpha}{2})\pi^{\frac{N}{2}}2^s},  $$
where $\Gamma$ denotes the Gamma function,  see \cite{Muroz0}. The Choquard equation has many physical applications. For example assuming that $N = 3, \alpha = 2, p = 2, \lambda = 0$ and
$V \equiv 0$, Problem \eqref{eq1} was investigated in \cite{Pekar} to study the quantum theory of a polaron at rest. It was
pointed in \cite{Lieb} that Choquard applied it as an approximation to Hartree–Fock theory of one component
plasma. It also arises in multiple particles systems \cite{Gross} and quantum mechanics \cite{Penrose}. Furthermore, 
for each solution $u$ of Problem \eqref{eq1} we obtain the wave-function $\Phi : \mathbb{R}\times \mathbb{R}^N \to \mathbb{C}$ defined by $\Phi(t,x)= e^{i t} u(x)$ where $i$ is the imaginary unit. Hence $\Phi$ is a solitary wave of the focusing time-dependent Hartree equation $$ - i\Phi_t -  \Delta \Phi + W(x) \Phi - (I_\alpha  * |\Phi|^p) |\Phi|^{p-2}\Phi - \lambda |\Phi|^{q-2}\Phi = 0, \,\, \mbox{in} \,\,\mathbb{R} \times \mathbb{R}^N$$ where $W(x) = V(x) - 1, x \in \mathbb{R}^N$. Hence Problem \eqref{eq1} can be understood as the stationary nonlinear Hartree equation.

It is important to emphasize that nonlocal elliptic problems involving Choquard equations have been studied in the last years taking into account several kinds of assumptions on the potential $V$. Here we refer the interested reader to the works \cite{Muroz0,Muroz1, Muroz2,Li} and references therein. In these works was considered existence, nonexistence and quality properties of weak solutions for Choquard equations assuming that $\lambda = 0$. In other words, semilinear elliptic problems involving the Choquard equation have been widely considered assuming that the nonlinearity is superlinear at infinity and at the origin. For further results on nonlocal elliptic problems involving the Choquard equation we refer to \cite{Chen, Li}.

Our main contribution is to consider existence and multiplicity of solutions for the Problem \eqref{eq1} where the nonlinearity is concave-convex. This kind of problem for the local case have been extensively considered in the last years. Here we cite the pioneer work \cite{ABC} where several results are proved on bounded domains $\Omega \subset \mathbb{R}^N$. In the whole space concave-convex nonlinearities have been considered assuming extra assumptions on the potential $V$, see \cite{Wu1, Wu2,Wu0}. Another contribution in this work is to consider the nonlinear Rayleigh quotient proving existence of a parameter $\lambda_n > 0$ such that Problem \eqref{eq1} admits at least two solutions for each $\lambda \in (0, \lambda_n]$. The main point here is to ensure that the Nehari method can be applied for each $\lambda \in (0, \lambda_n)$. In fact, to the best our knowledge, this is the first work considering the Choquard equation using the nonlinear Rayleigh quotient together with a fine analysis on the Nehari method.  Furthermore, by using a sequence procedure, we consider the behavior for these solutions when the parameter $\lambda$ goes to zero or $\lambda_n$. Finally, we also consider a regularity result for our main problem. More specifically, we show that any weak solution for the Problem \eqref{eq1} is in $C_{loc}^{1,\beta}(\mathbb{R}^N)$ for some $\beta \in (0,1)$, see Appendix ahead. Notice also that for $\lambda=\lambda_n$ the Choquard term brings us some difficulties. The first one is to ensure that minimizers on the Nehari manifold yields a critical point for the energy functional. The second one is to guarantee that any minimizer in the Nehari manifold is small in the set $B_R(0)^c = \{x \in \mathbb{R}^N, |x| > R \}$ for some $R > 0$ large enough, that is, for each minimizer $u \in X$ in the Nehari manifold we need to show that $|u(x)| \to 0$ as $|x| \to \infty$. In order to overcome these difficulties we prove a regularity result together with an appropriate behavior for the Choquard term, see Appendix. Hence we can prove our main results assuming that $\lambda \in (0, \lambda_n)$ finding existence of two positive solutions for Problem \eqref{eq1} for each $\lambda \in (0, \lambda_n]$.

It is important to worthwhile that nonlinear Rayleigh quotient have been studied in the last years, see \cite{marcos,yavdat0, yavdat1, yavdat2}. The main feature in these works is to guarantee that there exists an extreme value $\lambda_n > 0$ in such way that the Nehari method can be applied for each $\lambda  \in (0, \lambda_n)$. The basic idea is to ensure that the fibering map admits at least two critical points for each $\lambda \in (0, \lambda_n)$. The same can be done for our main problem taking into account the convolution term which bring us some difficulties. The first one is to control the behavior at infinity and at the origin for the fibering maps which is the key point for our arguments. The second difficulty arises from in order to classify the signal for energy functional. More specifically, we obtain existence of a positive ground state solution $u_\lambda$ such that $E_\lambda(u_\lambda) < 0$ for each $\lambda \in (0, \lambda_n]$ where $E_\lambda$ is the energy functional for our main problem. For the second positive solution $v_\lambda$ the problem is more involved. For this solution we need to consider another Rayleigh quotient in order to get extra informations for the signal of $E_\lambda(v_\lambda)$. This can be done proving existence of a parameter $\lambda_e > 0$ such that $E_\lambda(v_\lambda) > 0$ for each $\lambda \in (0, \lambda_e)$. In the same way, assuming that $\lambda = \lambda_e$ we obtain $E_\lambda(v_\lambda) = 0$, i.e, $v_\lambda$ is a positive solution with zero energy. For the case $\lambda \in (\lambda_e, \lambda_n)$ we obtain existence of two positive solutions with negative energy. Furthermore, we prove the same result using a sequence whenever $\lambda = \lambda_n$. More specifically, given any sequence $(\lambda_j)$ such that $\lambda_j \to \lambda_n$ as $j \to \infty$ with $\lambda_j < \lambda_n$ we ensure that Problem \eqref{eq1} admits at least weak solutions for $\lambda = \lambda_n$. Moreover, for each $i = 1, 2$ we consider the continuity for the function $\lambda\mapsto\mathcal{E}_\lambda^i$ where $\mathcal{E}_\lambda^i$ denotes the minimal energy level on the Nehari manifolds $\mathcal{N}_\lambda^-$ and $\mathcal{N}_\lambda^+$, see Theorems \ref{theorem2} and \ref{theorem3} ahead. Hence our main results complement the aforementioned works.

\subsection{Assumptions and main theorems}
As mentioned in the introduction, we are concerned with the existence of ground and bound states for Problem ~\eqref{eq1} involving concave-convex nonlinearities. In this case, we need to control the parameter $\lambda > 0$ getting our main results. In order to overcome this difficulty, we shall consider the nonlinear Rayleigh quotient showing that there exists $\lambda_n > 0$ such that the Nehari method can be applied for each $\lambda \in (0, \lambda_n]$. More specifically, we study the existence of ground and bound state solutions for our main problem taking advantage that the nonlinearity is concave-convex.

Throughout this work we assume the following assumptions:
\begin{itemize}
	\item[$(Q)$] It holds $1 < q < 2$ and  $p\in(2_\alpha,\2_\alpha)$ with $2_\alpha= (N+\alpha)/N$, $ \2_\alpha= (N+\alpha)/(N-2)$;
	\item [$(V_1)$] The function $V:\mathbb{R}^N\to \mathbb{R}$ is continuous and there exists a constant $V_0>0$ such that $$V(x)\geq V_0 \,\, \mbox{for all} \,\, x \in \mathbb{R}^N;$$
	\item [$(V_2)$] It holds $V^{-1}\in L^1(\mathbb{R^N})$, i.e., the function $V$ satisfies the following integrability condition $$\Int \dfrac{1}{V(x)}dx<+\infty.$$
\end{itemize}

Now we consider the working space for our problem defined by
$$X= \left\{v\in H^1(\mathbb{R}^N): \Int V(x)v^2dx<+\infty \right\}.$$ 
Notice that $X$ is a Banach space which is endowed with the norm
\begin{equation}
||v||:=\left(\Int (|\nabla u|^2 + V(x)u^2)dx\right)^{\frac12}.
\end{equation}
Under our hypotheses, the embedding $X\hookrightarrow L^r(\mathbb{R}^N)$ is continuous for each $r\in [1,2^*]$. Furthermore, the same embedding is compact for each $r \in [1,2^*)$, 
see for instance \cite{citub}. It is worthwhile to mention that the energy functional associated to Problem \eqref{eq1} is given by 
\begin{equation}\label{functional}
E_{\lambda}(u)=\dfrac{1}{2}||u||^2-\dfrac{1}{2p}\Int\left(I_\alpha*|u|^p\right)|u|^pdx-\dfrac{\lambda}{q}\Int |u|^qdx, \, u\in X.
\end{equation}
Now we define the inner product in $X$ as follows
\begin{equation}\label{normas}
\left<u,\psi\right>=\Int\left(\nabla u\nabla \psi+V(x) u\psi\right)dx, \, u, \psi \in X.
\end{equation}
It is worthwhile to mention that a function $u \in X$ is said to be a weak solution for Problem \eqref{eq1} whenever 
\begin{equation}
\left<u,\psi\right>-\Int\left(I_\alpha*|u|^p\right)|u|^{p-2}u\psi dx-\lambda\Int|u|^{q-2}u\psi dx = 0 \,\, \mbox{for every} \,\, \psi\in H^1(\mathbb{R}^N).
\end{equation}
Using the embedding $X\hookrightarrow L^r(\mathbb{R}^N)$ for each $r\in [1,2^*]$ it is well known that $E_\lambda \in C^1(X,\mathbb{R})$. Furthermore, the Gateaux derivative for $E_\lambda$ is given by 
\begin{equation}
E'_\lambda(u)\psi=\left<u,\psi\right>-\Int\left(I_\alpha*|u|^p\right)|u|^{p-2}u\psi dx-\lambda\Int|u|^{q-2}u\psi dx, \,\, \mbox{for every} \,\, u,\psi\in X.
\end{equation}
Hence, a function $u \in X$ is a weak solution to the elliptic Problem \eqref{eq1} if and only if $u$ is a critical point for the functional $E_\lambda$. Notice also that we consider the last identity for any testing function $\psi \in X$. However, using some estimates we can use any test function $\psi \in H^1(\mathbb{R}^N)$, see Proposition \ref{ahead} ahead. In this way, we can apply variational methods in order to ensure that the functional $E_\lambda$ admits critical points. It is important to recall that a nontrivial solution $u\in X$ is called a ground state solution for Problem \eqref{eq1} provided that $u$ satisfies the following identity
\begin{equation}\label{ground}
E_\lambda(u)=\inf\{E_\lambda(v): v\in X\setminus\{0\} \mbox{ and } E'_\lambda(u)=0 \}.
\end{equation}

It is important to emphasize that $\Lambda_e,\Lambda_n: X \setminus \{0\} \rightarrow \mathbb{R}$ are $C^1$ functions given by
\begin{equation}
\Lambda_e(u) = \max_{t>0}\dfrac{\frac{t^{2-q}}{2}||u||^2-\frac{t^{2p-q}}{2p}\int_{\Rn}\left(I_\alpha*|u|^p\right)|u|^pdx}{\frac{1}{q}\int_{\Rn}|u|^qdx}
\end{equation}
and
\begin{equation}
\Lambda_n(u) =\max_{t>0}\dfrac{t^{2-q}||u||^2-t^{2p-q}\int_{\Rn}\left(I_\alpha*|u|^p\right)|u|^pdx}{\int_{\Rn}|u|^qdx}.
\end{equation}
At this stage, we shall consider the nonlinear Rayleigh quotient as follows
\begin{eqnarray}
\lambda_e:=\inf_{u\in X\setminus\{0\}} \Lambda_e(u) \label{lambdae}
\end{eqnarray}
and
\begin{eqnarray}
\lambda_n:=\inf_{u\in X\setminus\{0\}} \Lambda_n(u).\label{lambdan}
\end{eqnarray}

In our main results we shall consider the Nehari set as follows
\begin{equation}
\begin{array}{rcl}
\mathcal{N_\lambda}&=&\left\{ u\in  X\setminus \{0\}:||u||^2-\Int\left(I_\alpha*|u|^p\right)|u|^pdx=\lambda{\Int|u|^qdx}\right\}, \lambda > 0.
\end{array}
\end{equation}
For the Nehari method we refer the reader to \cite{nehari1,nehari2}. The Nehari set can be separated in the following form:
\begin{eqnarray}
\mathcal{N_\lambda}^+&=&\{u\in\mathcal{N}_\lambda: E''_\lambda(u)(u,u)>0 \}\label{n+}, \nonumber \\ 
\mathcal{N_\lambda}^-&=&\{u\in\mathcal{N}_\lambda: E''_\lambda(u)(u,u)<0 \}\label{n-}, \nonumber \\
\mathcal{N_\lambda}^0&=&\{u\in\mathcal{N}_\lambda: E''_\lambda(u)(u,u)=0\}\label{n0}. \nonumber 
\end{eqnarray}
It is not hard to verify that the function $u \mapsto E''_\lambda(u)(u,u)$ is well defined for each $u \in X$. Namely, we can deduce the following expression 
\begin{equation}\label{segunda}
E''_\lambda(u)(u,u) = ||u||^2- (2 p -1)\Int\left(I_\alpha*|u|^p\right)|u|^pdx-\lambda (q - 1)\Int |u|^qdx, \, u\in X.
\end{equation}
The main objective here is to find solutions for the following minimization problems
\begin{equation}\label{ee1}
\mathcal{E}_\lambda^1:=\inf\{E_\lambda(u): u\in\mathcal{N_\lambda}^+\}
\end{equation}
\begin{equation}\label{ee2}
\mathcal{E}_\lambda^2:=\inf\{E_\lambda(u): u\in\mathcal{N_\lambda}^-\}.
\end{equation}
It is not hard to verify that any minimizer $u_\lambda \in \mathcal{N}_\lambda^+$ give us a ground state solution for the Problem \eqref{eq1}. This can be done comparing the energy levels given by \eqref{ground} and  \eqref{ee1}.
In order to find minimizer in $\mathcal{N}_\lambda^+$ and $\mathcal{N}_\lambda^-$ we need to consider some extra assumptions. Indeed, assuming that $\lambda \in (0, \lambda_n)$, we shall prove that $\mathcal{E}_\lambda^1$ and $\mathcal{E}_\lambda^1$ are attained. It is important to mention that $\mathcal{N}_{\lambda}^0$ is empty for each $\lambda \in (0, \lambda_n)$. This fact allows us to apply the Nehari method taking into account the uniqueness of the projections in $\mathcal{N}_\lambda^+$ and $\mathcal{N}_\lambda^-$. This is the main feature for the parameter $\lambda_n > 0$. In fact, the parameter $\lambda_n$ is the first positive number in such way that $\mathcal{N}_\lambda^0$ is not empty, see \cite{yavdat1}. In this way, we can state our first main result in the following way:
\begin{theorem}\label{theorm1}
	Suppose $(Q)$ and $(V_1)-(V_2)$. Then $0<\lambda_e<\lambda_n<\infty$ and for each $\lambda\in(0,\lambda_n)$ the Problem \eqref{eq1} admits at least two distinct positive solutions $u_\lambda, v_\lambda \in X$ satisfying the following statements:  $E''_\lambda(u_\lambda)(u_\lambda,u_\lambda)>0,\  E''_\lambda(v_\lambda)(v_\lambda,v_\lambda)<0, \, \ E_\lambda(u_\lambda)<0$ and  $u_\lambda \in \mathcal{N}_\lambda^+, v_\lambda \in \mathcal{N}_\lambda^-$.  Furthermore, $u_\lambda$ is a ground state solution and $v_\lambda$ satisfies the following statements: 
	\begin{itemize}
		\item[(i)] For each $\lambda\in (0,\lambda_e)$ we obtain that $E_\lambda(v_\lambda)>0$;
		\item[(ii)] For each $\lambda=\lambda_e$ we deduce that $E_\lambda(v_\lambda)=0$;
		\item[(iii)] For each $\lambda\in (\lambda_e, \lambda_n)$ we obtain also that $E_\lambda(v_\lambda)<0$.
	\end{itemize}
\end{theorem}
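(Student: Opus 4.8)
The plan is to analyze the fibering map $t\mapsto E_\lambda(tu)$ for fixed $u\in X\setminus\{0\}$ and to extract the two solutions by a careful study of its critical points, controlled through the nonlinear Rayleigh quotients $\Lambda_e$ and $\Lambda_n$. First I would establish the ordering $0<\lambda_e<\lambda_n<\infty$. Writing $a(u)=\|u\|^2$, $b(u)=\int_{\Rn}(I_\alpha*|u|^p)|u|^p\,dx$, $c(u)=\int_{\Rn}|u|^q\,dx$, the function $\varphi_u(t)=t^{2-q}a(u)-t^{2p-q}b(u)$ appearing in $\Lambda_n$ is, up to the positive factor $c(u)$, exactly the map whose zeros give the Nehari constraint after dividing by $t^{q-1}$; since $2-q>0$ and $2p-q>2-q$, $\varphi_u$ vanishes at $t=0$, is positive for small $t$, tends to $-\infty$, and has a unique interior maximum at an explicit $t_n(u)$. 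Plugging $t_n(u)$ back in gives a closed formula for $\Lambda_n(u)$ that is $0$-homogeneous in $u$; the same for $\Lambda_e(u)$, and a direct comparison of the two closed forms (they differ only by the factors $1/2$ and $1/2p$ in the definition of $\Lambda_e$) yields $\Lambda_e(u)<\Lambda_n(u)$ for every $u$, hence $\lambda_e\le\lambda_n$. Strict inequality and positivity of $\lambda_e$ follow from the Hardy–Littlewood–Sobolev inequality together with the compact embeddings $X\hookrightarrow L^r$, which give $b(u)\le C a(u)^p$ and $c(u)\le C a(u)^{q/2}$, so that $\Lambda_n(u)$ is bounded below by a positive constant; finiteness of $\lambda_n$ comes from testing with a single fixed function.

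Next, for $\lambda\in(0,\lambda_n)$ I would show $\mathcal{N}_\lambda^0=\emptyset$ and that $\mathcal{N}_\lambda=\mathcal{N}_\lambda^+\cup\mathcal{N}_\lambda^-$ splits into two pieces on which the fibering map has, respectively, a local minimum and a local maximum. The key point is the projection lemma: for each $u\in X\setminus\{0\}$ the equation $E_\lambda'(tu)(u)=0$, i.e. $t^{2-q}a(u)-t^{2p-q}b(u)=\lambda c(u)$, has \emph{exactly two} positive roots $t^+(u)<t^-(u)$ precisely when $\lambda<\Lambda_n(u)$, one root when $\lambda=\Lambda_n(u)$, and none when $\lambda>\Lambda_n(u)$; since $\lambda<\lambda_n\le\Lambda_n(u)$ for all $u$, we always land in the two-root regime, with $t^+(u)u\in\mathcal{N}_\lambda^+$ and $t^-(u)u\in\mathcal{N}_\lambda^-$, and the absence of a double root gives $\mathcal{N}_\lambda^0=\emptyset$. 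The implicit function theorem applied to $\varphi_u(t)=\lambda c(u)$ (using $\varphi_u'\neq0$ at simple roots) gives that $t^\pm$ are $C^1$ in $u$, hence $\mathcal{N}_\lambda^\pm$ are $C^1$ manifolds and minimizing $E_\lambda$ over each is meaningful. On $\mathcal{N}_\lambda^+$ one checks $E_\lambda<0$ directly from the constraint (the concave term dominates near the smaller root), while $E_\lambda$ is bounded below on all of $\mathcal{N}_\lambda$ by coercivity estimates coming from $(V_1)$ and HLS.

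Then I would run the minimization. For $\mathcal{E}_\lambda^1$: take a minimizing sequence in $\mathcal{N}_\lambda^+$, note it is bounded (coercivity), pass to a weak limit $u_\lambda$ in $X$ and a strong limit in $L^r$, $r<2^*$, for the lower-order and Choquard terms via the compact embedding; the limit is nonzero because $\mathcal{E}_\lambda^1<0$ is bounded away from $0$, and $u_\lambda\in\mathcal{N}_\lambda^+$ with $E_\lambda(u_\lambda)=\mathcal{E}_\lambda^1$. A standard argument with the projection map then shows $u_\lambda$ is a free critical point of $E_\lambda$ (if not, project a deformation to strictly lower the energy), and replacing $u_\lambda$ by $|u_\lambda|$ together with the strong maximum principle gives positivity; comparing \eqref{ground} with \eqref{ee1} identifies it as a ground state. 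For $\mathcal{E}_\lambda^2$ over $\mathcal{N}_\lambda^-$ the same scheme applies, the one subtlety being to rule out that a minimizing sequence has vanishing $L^r$-mass; here the characterization of $\mathcal{N}_\lambda^-$ via $t^-(u)$ and the strict inequality $\lambda<\lambda_n$ force a uniform lower bound $\|u\|\ge\rho>0$ on $\mathcal{N}_\lambda^-$, and combined with the boundedness of $c$ on minimizing sequences this prevents the weak limit from being trivial, yielding $v_\lambda\in\mathcal{N}_\lambda^-$ with $E_\lambda(v_\lambda)=\mathcal{E}_\lambda^2$, positive, and $u_\lambda\neq v_\lambda$ since they lie in disjoint sets.

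Finally, the sign of $E_\lambda(v_\lambda)$ is read off the second Rayleigh quotient $\Lambda_e$. Evaluating $E_\lambda(tu)$ at $t=t^-(u)$ and using the constraint to eliminate one term, one gets $E_\lambda(v_\lambda)$ has the sign of $\big(\tfrac{t^{2-q}}{2}a-\tfrac{t^{2p-q}}{2p}b\big)-\tfrac{\lambda}{q}c$ evaluated along $v_\lambda$, which is negative, zero, or positive according as $\lambda$ exceeds, equals, or is smaller than $\Lambda_e(v_\lambda)$; since $\lambda_e=\inf_u\Lambda_e(u)$, for $\lambda<\lambda_e$ we have $\lambda<\Lambda_e(v_\lambda)$ and (i) follows, at $\lambda=\lambda_e$ a compactness/attainment argument for $\lambda_e$ (the infimum defining $\lambda_e$ is achieved, by the same weak-limit machinery) places $v_\lambda$ at an extremizer giving (ii), and for $\lambda\in(\lambda_e,\lambda_n)$ we get (iii). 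I expect the main obstacle to be the last part — showing the minimizers of $E_\lambda$ on $\mathcal{N}_\lambda^\pm$ are genuine critical points and controlling the sign in the borderline regime $\lambda=\lambda_e$ — since this requires matching the \emph{minimizing} function for $\mathcal{E}_\lambda^2$ with the \emph{minimizing} function for the quotient $\lambda_e$, i.e. an interplay between two different variational problems, together with the nonlocal term obstructing a naive truncation; the Brezis–Lieb lemma for the Choquard nonlinearity and the strict monotonicity of $t\mapsto E_\lambda(tu)$ on the relevant intervals are the tools I would lean on there.
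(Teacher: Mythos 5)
Your strategy for the existence part coincides with the paper's: the closed forms of $\Lambda_n$ and $\Lambda_e$ together with $\Lambda_e=C\Lambda_n$, $C\in(0,1)$, give $0<\lambda_e<\lambda_n<\infty$ (Lemma \ref{Lambda}, Remark \ref{impor}); the two-root projection lemma gives $\mathcal{N}_\lambda^0=\emptyset$ and the splitting of $\mathcal{N}_\lambda$ (Proposition \ref{compar}); and coercive minimization on $\mathcal{N}_\lambda^\pm$, the natural-constraint argument, $|u_\lambda|$ plus regularity and the strong maximum principle produce the two positive solutions, as in Proposition \ref{exx}. One step you pass over too quickly: after extracting the weak limit of a minimizing sequence, weak lower semicontinuity of the norm may push the limit off the Nehari constraint, so ``$u_\lambda\in\mathcal{N}_\lambda^+$ and $E_\lambda(u_\lambda)=\mathcal{E}_\lambda^1$'' does not follow from the compact embeddings alone; the paper closes this by a projection--comparison contradiction (if $\|u_\lambda\|<\liminf\|u_k\|$ then $E_\lambda(t_\lambda^{n,\pm}(u_\lambda)u_\lambda)<\mathcal{E}_\lambda^{1,2}$, see Lemmas \ref{converg} and \ref{converg1}), which is exactly the ``strict monotonicity of $t\mapsto E_\lambda(tu)$'' tool you name but never actually deploy.

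The genuine gap is in the sign classification (i)--(iii). The algebra you invoke shows that the sign of $E_\lambda(v_\lambda)$ is that of $R_e(v_\lambda)-\lambda$, i.e.\ the quotient evaluated at $t=1$, not its maximum $\Lambda_e(v_\lambda)=\max_{t>0}R_e(tv_\lambda)$; your criterion ``negative, zero, or positive according as $\lambda$ exceeds, equals, or is smaller than $\Lambda_e(v_\lambda)$'' is therefore not what the formula gives. For (i) the conclusion can be rescued, but only through the fibering geometry the paper uses: since $v_\lambda\in\mathcal{N}_\lambda^-$ one has $t_\lambda^{n,-}(v_\lambda)=1$, and one must first show $1>t_e(v_\lambda)$ (using that $Q_n$ is strictly decreasing beyond $t_n$ and that $Q_n(t_e(v_\lambda))=\Lambda_e(v_\lambda)\geq\lambda_e>\lambda=Q_n(1)$, via Remark \ref{Qe=Qn}), whence $R_n(v_\lambda)<R_e(v_\lambda)$ and $E_\lambda(v_\lambda)>0$ by Remark \ref{rmk11}. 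For (iii) even the corrected pointwise criterion is useless, because nothing guarantees $\Lambda_e(v_\lambda)<\lambda$ for the minimizer itself; the paper instead chooses $u$ with $\Lambda_e(u)<\lambda$ (possible since $\lambda>\lambda_e$) and uses the competitor $t_\lambda^{n,-}(u)u\in\mathcal{N}_\lambda^-$, for which $R_e<R_n=\lambda$, to force $\mathcal{E}_\lambda^2<0$. Finally, (ii) needs a two-sided argument you do not indicate: the upper bound $\mathcal{E}_{\lambda_e}^2\leq 0$ comes from showing that a minimizer of $\Lambda_e$, rescaled by $t_e$, is a zero-energy element of $\mathcal{N}_{\lambda_e}^-$ (Proposition \ref{lambda-e} combined with $R_n=R_e$ exactly at $t_e$), while the reverse bound $E_{\lambda_e}(v_{\lambda_e})\geq 0$ again requires the $Q_n$/$Q_e$ comparison; ``the attainment of $\lambda_e$ places $v_\lambda$ at an extremizer'' is an assertion, not a proof, and $v_{\lambda_e}$ can be matched with that extremizer only after both inequalities are in hand.
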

\begin{figure}[!ht]
	\begin{minipage}[h]{0.49\linewidth}
		\center{\includegraphics[scale=0.7]{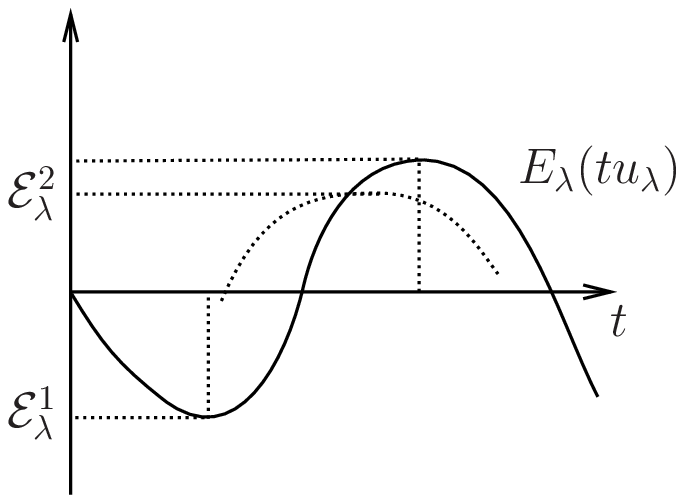}}
		\caption{$\lambda\in (0,\lambda_e)$}
		\label{figE}
	\end{minipage}
	\begin{minipage}[h]{0.49\linewidth}
		\center{\includegraphics[scale=0.7]{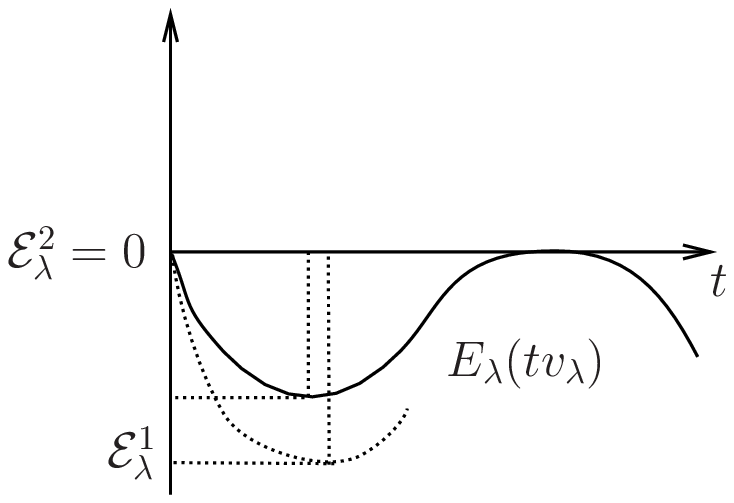}}
		\caption{$\lambda=\lambda_e$ }
		\label{figE1}
	\end{minipage}
	\begin{minipage}[h]{0.49\linewidth}
		\center{\includegraphics[scale=0.7]{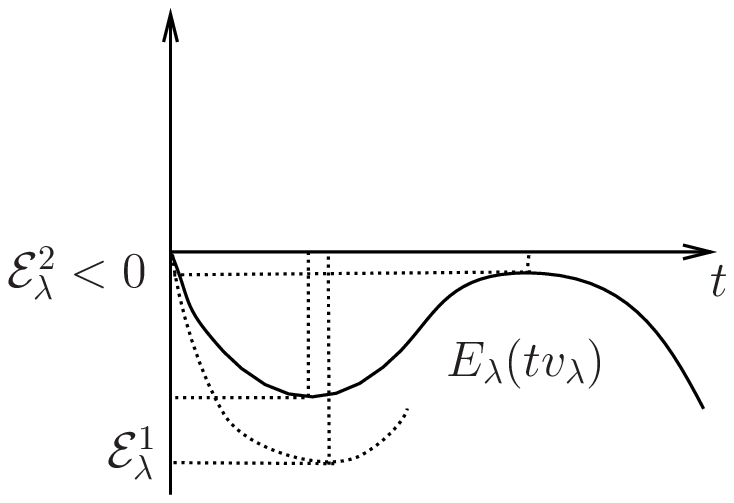}}
		\caption{$\lambda\in(\lambda_e,\lambda_n)$ }
		\label{figE2}
	\end{minipage}
\end{figure}

\begin{rmk}
	Under assumptions of Theorem \ref{theorm1} we obtain two positive solutions $u_\lambda$ and $v_\lambda$ for each $\lambda \in (0, \lambda_n)$. More specifically, $u_\lambda$  and $v_\lambda$ are solutions for the minimizations problems \eqref{ee1} and \eqref{ee2}, respectively. Furthermore, we classify the signal of $E_\lambda(v_\lambda)$ depending on the size of $\lambda$ where $\lambda \in (0, \lambda_n)$, see Figures \ref{figE}, \ref{figE1}, \ref{figE2}. 
\end{rmk}
 
It follows from Theorem \ref{theorm1} that $\mathcal{E}^1_{\lambda}:=E_\lambda(u_\lambda)$ and $\mathcal{E}^2_{\lambda}:=E_\lambda(v_\lambda)$. In other words, $\mathcal{E}^1_{\lambda}$ and $\mathcal{E}^2_{\lambda}$ are attained for each $\lambda \in (0, \lambda_n)$. 
Under these conditions we are able to state our second main result in the following form:
\begin{theorem}\label{theorem2}
Suppose $(Q)$ and $(V_1)-(V_2)$.	Let $\widetilde{\lambda}\in(0,\lambda_n)$ be fixed. Assume that $(\lambda_j)\subset (0,\lambda_n)$ such that  $\lambda_j\to\widetilde\lambda$. Then we obtain that the following assertions:
	\begin{itemize}
		\item[i)] The functions $\lambda\mapsto\mathcal{E}_\lambda^1$ and $\lambda\mapsto\mathcal{E}_\lambda^2$ are decreasing and $\mathcal{E}_\lambda^1<\mathcal{E}_\lambda^2$;
		\item[ii)] $u_{\lambda_j} \rightarrow u_{\widetilde\lambda}$ and $v_{\lambda_j} \rightarrow v_{\widetilde\lambda}$ in $X$ as $j \rightarrow \infty$;
		\item[iii)] $\mathcal{E}^i_{\lambda_j} \to \mathcal{E}^i_{\widetilde\lambda}$ as
		$j\to \infty$, that is, $\lambda \mapsto \mathcal{E}^i_{\lambda}$ are continuous functions for each $i=1,2$.
	\end{itemize}
\end{theorem}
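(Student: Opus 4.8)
The plan is to establish the three assertions of Theorem~\ref{theorem2} in order, relying on the variational structure coming from Theorem~\ref{theorm1} and on the sharp role of $\lambda_n$.

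\textbf{Monotonicity of the energy levels (item i).} First I would fix $\lambda_1 < \lambda_2$ in $(0,\lambda_n)$ and compare the Nehari fibering maps. For a fixed $u \in X\setminus\{0\}$ the map $t\mapsto E_\lambda(tu)$ has, for each $\lambda\in(0,\lambda_n)$, exactly two positive critical points $t^+_\lambda(u) < t^-_\lambda(u)$ (this is precisely the content of $\mathcal{N}_\lambda^0=\emptyset$ and the uniqueness of the projections mentioned before Theorem~\ref{theorm1}), with $t^+_\lambda(u)u\in\mathcal{N}_\lambda^+$ a local minimum of the fiber and $t^-_\lambda(u)u\in\mathcal{N}_\lambda^-$ a local maximum. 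Since the term $-\frac{\lambda}{q}\int|u|^q$ is strictly decreasing in $\lambda$, one has $E_{\lambda_2}(v) < E_{\lambda_1}(v)$ for every fixed $v\neq 0$; evaluating at the projection realizing $\mathcal{E}_{\lambda_1}^1$ and using that $\mathcal{E}_{\lambda_2}^1$ is an infimum over $\mathcal{N}_{\lambda_2}^+$ (after projecting that minimizer onto $\mathcal{N}_{\lambda_2}^+$) gives $\mathcal{E}_{\lambda_2}^1 < \mathcal{E}_{\lambda_1}^1$; the same argument with the $\mathcal{N}^-$ projection gives $\mathcal{E}_{\lambda_2}^2 < \mathcal{E}_{\lambda_1}^2$. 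The inequality $\mathcal{E}_\lambda^1 < \mathcal{E}_\lambda^2$ follows because $E_\lambda(t^+_\lambda(u)u) < E_\lambda(t^-_\lambda(u)u)$ for each $u$ (the fiber increases between its two critical points) together with the fact that $u_\lambda=t^+_\lambda(u_\lambda)u_\lambda$ and $v_\lambda = t^-_\lambda(v_\lambda)v_\lambda$ are the respective minimizers; a short argument comparing $v_\lambda$ projected to $\mathcal{N}_\lambda^+$ closes it.

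\textbf{Convergence of the minimizers (item ii).} Take $\lambda_j\to\widetilde\lambda$ in $(0,\lambda_n)$. The sequences $(u_{\lambda_j})$ and $(v_{\lambda_j})$ are bounded in $X$: boundedness follows from the Nehari identity combined with $1<q<2<2p$ and the fact that, since $\widetilde\lambda<\lambda_n$, the $\lambda_j$ stay in a compact subinterval of $(0,\lambda_n)$, so the energy levels $\mathcal{E}^i_{\lambda_j}$ stay bounded (by monotonicity they lie between $\mathcal{E}^i_{\lambda'}$ and $\mathcal{E}^i_{\lambda''}$ for $\lambda',\lambda''$ bracketing all the $\lambda_j$). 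Passing to a subsequence, $u_{\lambda_j}\rightharpoonup u_*$ weakly in $X$, and by the compact embedding $X\hookrightarrow L^r(\mathbb{R}^N)$ for $r\in[1,2^*)$ — together with the Hardy--Littlewood--Sobolev continuity of the Choquard term under such convergence — one passes to the limit in $E'_{\lambda_j}(u_{\lambda_j})=0$ to get $E'_{\widetilde\lambda}(u_*)=0$. The delicate point is to rule out $u_*=0$ and to upgrade to strong convergence: here I would use that a nontrivial critical point at level $\mathcal{E}^1_{\widetilde\lambda}$ exists and is $<0$ (Theorem~\ref{theorm1}), that $E_{\lambda_j}(u_{\lambda_j})\le E_{\lambda_j}(t^+_{\lambda_j}(w)w)$ for a fixed test function $w$, which bounds the levels away from $0$ uniformly, forcing $\int|u_*|^q>0$; then weak lower semicontinuity of the norm plus convergence of the lower-order terms forces $\|u_{\lambda_j}\|\to\|u_*\|$, hence strong convergence, and $u_*$ lands in $\mathcal{N}_{\widetilde\lambda}^+$ with $E_{\widetilde\lambda}(u_*)=\lim E_{\lambda_j}(u_{\lambda_j})$. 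A comparison of this limit with $\mathcal{E}^1_{\widetilde\lambda}$ from both sides (using that any $\mathcal{N}_{\widetilde\lambda}^+$ competitor can be projected into $\mathcal{N}_{\lambda_j}^+$ with nearby energy) identifies $u_* = u_{\widetilde\lambda}$, which also shows the whole sequence converges. The argument for $v_{\lambda_j}\to v_{\widetilde\lambda}$ is parallel but uses the $\mathcal{N}^-$ side: one must check the weak limit does not degenerate onto the $\mathcal{N}^+$ branch, which is where the strict separation $t^+_\lambda < t^-_\lambda$ uniformly on the relevant set, and $\mathcal{N}_\lambda^0=\emptyset$ for $\lambda$ near $\widetilde\lambda<\lambda_n$, is essential.

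\textbf{Continuity of $\lambda\mapsto\mathcal{E}^i_\lambda$ (item iii).} This is now almost immediate: from item ii), $E_{\lambda_j}(u_{\lambda_j})\to E_{\widetilde\lambda}(u_{\widetilde\lambda})$ because $u_{\lambda_j}\to u_{\widetilde\lambda}$ strongly in $X$ and $\lambda_j\to\widetilde\lambda$, while $E_\lambda$ depends continuously on both arguments; since $\mathcal{E}^1_{\lambda_j}=E_{\lambda_j}(u_{\lambda_j})$ and $\mathcal{E}^1_{\widetilde\lambda}=E_{\widetilde\lambda}(u_{\widetilde\lambda})$ by the remark following Theorem~\ref{theorm1}, we get $\mathcal{E}^1_{\lambda_j}\to\mathcal{E}^1_{\widetilde\lambda}$, and identically for $i=2$. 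Since the sequence $(\lambda_j)$ was an arbitrary sequence converging to $\widetilde\lambda$, this gives continuity at $\widetilde\lambda$, and $\widetilde\lambda\in(0,\lambda_n)$ was arbitrary.

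\textbf{Main obstacle.} The crux is item ii), specifically preventing the weak limits from vanishing or, for $v_{\lambda_j}$, from collapsing onto the $\mathcal{N}^+$ branch, and then promoting weak to strong convergence. The uniform (in $j$) lower and upper bounds on the energy levels — available because $\widetilde\lambda$ is strictly below the threshold $\lambda_n$, so all $\lambda_j$ sit in a fixed compact subinterval where $\mathcal{N}_\lambda^0=\emptyset$ — are what make the fibering-map projections behave uniformly and let Lebesgue/Hardy--Littlewood--Sobolev compactness do the rest; handling the Choquard convolution term carefully under weak convergence (as flagged in the introduction) is the one genuinely technical ingredient.
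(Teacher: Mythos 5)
Your proposal is correct and follows essentially the same route as the paper: monotonicity of $\lambda\mapsto\mathcal{E}^i_\lambda$ by projecting the minimizer at one parameter onto the Nehari manifolds at the other and using the fibering-map monotonicity between the two critical points; convergence of $u_{\lambda_j},v_{\lambda_j}$ via boundedness, the compact embedding $X\hookrightarrow L^r(\mathbb{R}^N)$ plus Hardy--Littlewood--Sobolev to pass to the limit in $E'_{\lambda_j}(u_{\lambda_j})=0$ and upgrade to strong convergence, with nonvanishing from $\mathcal{E}^1_{\widetilde\lambda}<0$ for the $\mathcal{N}^+$ branch and the uniform norm lower bound together with $\mathcal{N}^0_{\widetilde\lambda}=\emptyset$ for the $\mathcal{N}^-$ branch; and continuity of the levels from the strong convergence and joint continuity of $(\lambda,u)\mapsto E_\lambda(u)$. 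The only cosmetic differences are that the paper formalizes the continuity of the projections $t^{n,\pm}_\lambda$ in $\lambda$ via the Implicit Function Theorem (its Proposition \ref{F}) and proves strong convergence by testing with $u_{\lambda_j}-u_{\widetilde\lambda}$, whereas you obtain the energy bounds from monotonicity and norm convergence from the Nehari identity, which amounts to the same ingredients.
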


\begin{rmk}
	Under hypotheses of Theorem \ref{theorem2} it follows that $\lambda\mapsto\mathcal{E}_\lambda^1$ and $\lambda\mapsto\mathcal{E}_\lambda^2$ are decreasing continuous functions for each $\lambda \in (0, \lambda_n)$. These facts imply that $\mathcal{E}_{\tilde\lambda}^1$ and $\mathcal{E}_{\tilde\lambda}^2$ are close to $\mathcal{E}_\lambda^1$ and $\mathcal{E}_\lambda^2$ for $\tilde \lambda$ being next to $\lambda$, respectively.
\end{rmk}

In the next result we shall consider the behavior of $\mathcal{E}_\lambda^1$ and $\mathcal{E}_\lambda^2$ when $\lambda \to 0$. For this case, the concave term disappear proving that $u_\lambda$ goes to zero as $\lambda \to 0$. More precisely, we can state the following result: 
\begin{theorem}\label{theorem3}
	Suppose $(Q)$ and $(V_1)-(V_2)$. Assume that $(\lambda_j)\subset (0,\lambda_n)$ such that  $\lambda_j\to 0$. Then we obtain that the following assertions:
	\begin{itemize}
		\item[i)] $\mathcal{E}^1_{\lambda_j} \to 0$ and $ \mathcal{E}^2_{\lambda_j} \to \mathcal{E}^2_{0}$ as $j\to \infty$, that is, the function $\lambda\mapsto \mathcal{E}^i_{\lambda}$ is right continuous at $\lambda=0$ for each $i=1,2$.
		\item[ii)] $u_{\lambda_j} \rightarrow 0$ and $v_{\lambda_j} \rightarrow v_0$ in $X$ as $j \rightarrow \infty$ where $v_0$ is a positive solution of Problem \eqref{eq1} with $\lambda=0$.
	\end{itemize}
\end{theorem}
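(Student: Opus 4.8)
The plan is to exploit the fact, already established in Theorem~\ref{theorm1}, that for each $\lambda \in (0,\lambda_n)$ the levels $\mathcal{E}^1_\lambda$ and $\mathcal{E}^2_\lambda$ are attained at positive solutions $u_\lambda \in \mathcal{N}_\lambda^+$ and $v_\lambda \in \mathcal{N}_\lambda^-$, together with the monotonicity and compactness information from Theorem~\ref{theorem2}. For part~(i) regarding $\mathcal{E}^1_{\lambda_j}$, the first step is to produce a uniform upper bound $\mathcal{E}^1_{\lambda_j} \le 0$ (which holds since $E_\lambda(u_\lambda) < 0$) and then a lower bound tending to $0$. To get the lower bound, I would use the fibering-map description: for fixed $w \in X \setminus \{0\}$, the projection $t_\lambda^+(w)$ onto $\mathcal{N}_\lambda^+$ satisfies $t_\lambda^+(w) \to 0$ as $\lambda \to 0$, because $t_\lambda^+(w)$ is the smaller positive root of $t^{2-q}\|w\|^2 - t^{2p-q}\int (I_\alpha * |w|^p)|w|^p = \lambda \int |w|^q$ and the left side vanishes at $t=0$; hence $\mathcal{E}^1_{\lambda_j} \le E_{\lambda_j}(t_{\lambda_j}^+(w) w) \to 0$. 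Combined with $\mathcal{E}^1_{\lambda_j} \le 0$ and an argument that $\liminf \mathcal{E}^1_{\lambda_j} \ge 0$ — using that $u_{\lambda_j}$ lies on the Nehari manifold, so $E_{\lambda_j}(u_{\lambda_j}) = (\tfrac1q - \tfrac1{2})\|u_{\lambda_j}\|^2 + (\tfrac1{2} - \tfrac1{2p})\int(I_\alpha*|u_{\lambda_j}|^p)|u_{\lambda_j}|^p$ wait, one must be careful with signs here since $1<q<2$; instead I would show $\|u_{\lambda_j}\| \to 0$ directly (see below) — we conclude $\mathcal{E}^1_{\lambda_j} \to 0$.

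For part~(ii), the claim $u_{\lambda_j} \to 0$ in $X$: I would first show the sequence $(u_{\lambda_j})$ is bounded in $X$. From the Nehari identity and the definition of $\mathcal{N}_{\lambda_j}^+$ one extracts $\|u_{\lambda_j}\|^2 - \int(I_\alpha * |u_{\lambda_j}|^p)|u_{\lambda_j}|^p = \lambda_j \int |u_{\lambda_j}|^q$, and combining with $E_{\lambda_j}(u_{\lambda_j}) < 0$ gives control; boundedness follows because the subcritical growth $2_\alpha < p < 2^*_\alpha$ and $1 < q < 2$ prevent the norm from escaping to infinity while keeping the energy negative (a standard coercivity-type estimate via the Hardy–Littlewood–Sobolev inequality and the embeddings $X \hookrightarrow L^r$). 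Once bounded, pass to a weakly convergent subsequence $u_{\lambda_j} \rightharpoonup u_*$; by the compact embeddings $X \hookrightarrow L^r(\mathbb{R}^N)$, $r \in [1,2^*)$, the lower-order terms converge, and since $\lambda_j \to 0$ the concave term $\lambda_j \int |u_{\lambda_j}|^q \to 0$. Testing $E'_{\lambda_j}(u_{\lambda_j}) = 0$ against $u_{\lambda_j}$ forces $\|u_{\lambda_j}\|^2 \to \int(I_\alpha*|u_*|^p)|u_*|^p$, while $u_*$ weakly solves the $\lambda=0$ equation. If $u_* \ne 0$ it would be a nontrivial solution of the $\lambda = 0$ problem lying on the corresponding "plus" branch, but for $\lambda = 0$ the functional $E_0$ has mountain-pass geometry with the Nehari manifold consisting of a single type (the $2p$-homogeneity in the convex term means $\mathcal{N}_0^+ = \emptyset$), so in fact $u_* = 0$; then $\|u_{\lambda_j}\| \to 0$, giving strong convergence.

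For the $v_{\lambda_j}$ statements, the strategy is to show $(v_{\lambda_j})$ is bounded and bounded away from $0$ in $X$, then extract $v_{\lambda_j} \to v_0$ strongly using the compact embeddings plus $E'_{\lambda_j}(v_{\lambda_j}) = 0$ and $\lambda_j \to 0$; the limit $v_0$ is a nontrivial critical point of $E_0$ with $E_0''(v_0)(v_0,v_0) \le 0$, and one checks $E_0''(v_0)(v_0,v_0) < 0$ so $v_0 \in \mathcal{N}_0^- = \mathcal{N}_0$, hence a ground state of the $\lambda=0$ Choquard problem; positivity of $v_0$ is inherited in the limit (or re-derived by the maximum principle, using the regularity result from the Appendix). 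The continuity $\mathcal{E}^2_{\lambda_j} \to \mathcal{E}^2_0$ then follows from $E_{\lambda_j}(v_{\lambda_j}) \to E_0(v_0)$ together with a matching lower-semicontinuity bound $\liminf \mathcal{E}^2_{\lambda_j} \ge \mathcal{E}^2_0$ obtained by projecting a fixed minimizer for $\mathcal{E}^2_0$ onto $\mathcal{N}_{\lambda_j}^-$ and letting $j \to \infty$, plus $\limsup \mathcal{E}^2_{\lambda_j} \le \mathcal{E}^2_0$ from $E_{\lambda_j} \le E_0 + o(1)$ on bounded sets.

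The main obstacle I anticipate is the passage to the limit in the nonlocal Choquard term and, in particular, ensuring that the weak limit $v_0$ is genuinely nontrivial — i.e., ruling out vanishing for $(v_{\lambda_j})$. This requires a uniform lower bound $\|v_{\lambda_j}\| \ge c > 0$ and a uniform positive lower bound on $\int (I_\alpha * |v_{\lambda_j}|^p)|v_{\lambda_j}|^p$, which I would get from the structure of $\mathcal{N}_{\lambda_j}^-$ (where $E''_\lambda(v_\lambda)(v_\lambda,v_\lambda) < 0$ forces $(2p-1)\int(I_\alpha*|v_\lambda|^p)|v_\lambda|^p > \|v_\lambda\|^2 - \lambda(q-1)\int|v_\lambda|^q$, keeping the nonlocal term bounded below once $\|v_\lambda\|$ is bounded below) combined with the absence of a Palais–Smale-level gap near $\lambda = 0$; here the compactness of $X \hookrightarrow L^r$, $r < 2^*$, which upgrades to convergence of the HLS integral by the Hardy–Littlewood–Sobolev inequality with exponents in the admissible range $(2_\alpha,2^*_\alpha)$, is exactly what makes the argument go through.
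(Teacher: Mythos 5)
Your outline follows essentially the same route as the paper: boundedness of $(u_{\lambda_j})$ and $(v_{\lambda_j})$, strong convergence via the compact embeddings and the Hardy--Littlewood--Sobolev control of the Choquard term, the observation that at $\lambda=0$ the homogeneity forces $\mathcal{N}_0^+$ (and $\mathcal{N}_0^0$) to be empty so the limit of $u_{\lambda_j}$ must vanish, and the uniform lower bound $\|v_{\lambda_j}\|\geq c>0$ on $\mathcal{N}^-_{\lambda_j}$ to keep the second limit nontrivial; this is exactly the content of Lemma \ref{vanish} and Proposition \ref{convforte} together with the estimate \eqref{est-N-} (Lemma \ref{E-}). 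Two details need tightening. First, since the strict inequality $E''_{\lambda_j}(u_{\lambda_j})(u_{\lambda_j},u_{\lambda_j})>0$ only passes to the limit as $\geq 0$, ruling out a nontrivial limit requires excluding $\mathcal{N}_0^0$ as well as $\mathcal{N}_0^+$; your homogeneity computation does give both (as in Lemma \ref{vanish}), but you should state it, since invoking only $\mathcal{N}_0^+=\emptyset$ leaves the degenerate case open. Second, in the continuity step $\mathcal{E}^2_{\lambda_j}\to\mathcal{E}^2_0$ you have the two mechanisms swapped: projecting a fixed minimizer $v^*$ of $\mathcal{E}^2_0$ onto $\mathcal{N}^-_{\lambda_j}$ yields the \emph{upper} bound $\limsup_j\mathcal{E}^2_{\lambda_j}\leq E_0(v^*)=\mathcal{E}^2_0$ (since $t^{n,-}_{\lambda_j}(v^*)\to 1$), while the \emph{lower} bound comes from the strong convergence you already established, $\mathcal{E}^2_{\lambda_j}=E_{\lambda_j}(v_{\lambda_j})\to E_0(v_0)\geq\mathcal{E}^2_0$ because $v_0\in\mathcal{N}_0^-$; the pointwise inequality $E_{\lambda_j}\leq E_0$ by itself does not give $\limsup\mathcal{E}^2_{\lambda_j}\leq\mathcal{E}^2_0$, since the infima are taken over different constraint sets. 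With these repairs, which use only ingredients already present in your sketch, the argument coincides with the paper's proof.
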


For the next result we shall consider the case $\lambda= \lambda_n$. For this case we mention that $\mathcal{N}_{\lambda_n}^0 \neq \emptyset$. The parameter $\lambda_n$ is the smallest positive number in such way that $\mathcal{N}_{\lambda}^{0}$ is a nonempty set, see \cite{yavdat1}. The last assertion implies that the Nehari method can not be applied directly. Here we need to control the functions $u_\lambda$ and $v_\lambda$ which are the minimizers for $\mathcal{E}_\lambda^1$ and $\mathcal{E}_\lambda^2$. More specifically, we need to ensure that $u_\lambda$ and $v_\lambda$ does not belong to $\mathcal{N}_{\lambda}^{0}$. This can be done using a regularity result together with an asymptotic $u_\lambda$ and $v_\lambda$ at infinity. Due to the nonlocal term we consider some fine estimates in order to show that any weak solution for our main problems is smooth proving that any critical point for the functional $E_\lambda$ does not belong to $\mathcal{N}_{\lambda_n}^0$, see Theorem \ref{regular} and Proposition \ref{salva} ahead. In this way, we can ensure the following result:  

\begin{theorem}\label{theorem4}
	Suppose $(Q)$ and $(V_1)-(V_2)$. Assume also that $\lambda = \lambda_n$. Then Problem \eqref{eq1} admits at least two positive solutions $u_{\lambda_n}$ and $v_{\lambda_n}$. Furthermore, $\mathcal{E}_{\lambda_n}^1 < \mathcal{E}_{\lambda_n}^2$.
\end{theorem}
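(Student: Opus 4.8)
The plan is to obtain $u_{\lambda_n}$ and $v_{\lambda_n}$ as limits of the solutions furnished by Theorem~\ref{theorm1} along a sequence $\lambda_j\to\lambda_n$ with $\lambda_j\in(0,\lambda_n)$, the decisive point being to prevent these limits from landing on $\mathcal{N}_{\lambda_n}^0$. So fix such a sequence and let $u_j:=u_{\lambda_j}\in\mathcal{N}_{\lambda_j}^+$ and $v_j:=v_{\lambda_j}\in\mathcal{N}_{\lambda_j}^-$ be the positive solutions from that theorem, so that $E_{\lambda_j}'(u_j)=E_{\lambda_j}'(v_j)=0$ and $E_{\lambda_j}(u_j)=\mathcal{E}_{\lambda_j}^1<0$, $E_{\lambda_j}(v_j)=\mathcal{E}_{\lambda_j}^2$. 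Writing $A(w):=\Int(I_\alpha*|w|^p)|w|^pdx$ and $B(w):=\Int|w|^qdx$, the Nehari identity $\|w\|^2=A(w)+\lambda B(w)$ together with the sign conditions read off from \eqref{segunda} gives: on $\mathcal{N}_\lambda^+$, $\lambda(2-q)B(w)>(2p-2)A(w)$, hence $\|w\|^2<\frac{\lambda(2p-q)}{2p-2}B(w)\le C\lambda\|w\|^q$, so $\|u_j\|$ is bounded uniformly in $j$; on $\mathcal{N}_\lambda^-$, $(2p-2)A(w)>\lambda(2-q)B(w)$, hence $A(w)\ge c\|w\|^2$, so $E_\lambda(w)\ge c_1\|w\|^2-c_2\lambda\|w\|^q$, and since $\mathcal{E}_{\lambda_j}^2$ stays bounded above (compare $v_j$ with the $\mathcal{N}_{\lambda_j}^-$-projection of a fixed function, whose parameter depends continuously on $\lambda$, or use Theorem~\ref{theorem2}), coercivity in $\|w\|$ forces $\|v_j\|$ to be bounded. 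Passing to a subsequence, $u_j\rightharpoonup u_{\lambda_n}$ and $v_j\rightharpoonup v_{\lambda_n}$ in $X$, strongly in $L^r(\mathbb{R}^N)$ for every $r\in[1,2^*)$ and a.e.; since $2Np/(N+\alpha)\in(2,2^*)$ under $(Q)$, the Hardy--Littlewood--Sobolev inequality together with this compactness lets me pass to the limit in $E_{\lambda_j}'(u_j)\psi=0$ and $E_{\lambda_j}'(v_j)\psi=0$, so $u_{\lambda_n}$ and $v_{\lambda_n}$ are weak solutions of \eqref{eq1} with $\lambda=\lambda_n$. Since then $\|u_j\|^2=A(u_j)+\lambda_jB(u_j)\to A(u_{\lambda_n})+\lambda_nB(u_{\lambda_n})=\|u_{\lambda_n}\|^2$ (and similarly for $v_j$), the convergence is in fact strong in $X$.

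Next I would verify the limits are nontrivial. On $\mathcal{N}_{\lambda_j}^-$ one also has $A(v_j)\le C\|v_j\|^{2p}$, which combined with $A(v_j)\ge c\|v_j\|^2$ gives a uniform lower bound $\|v_j\|\ge c_0>0$; hence $\|v_{\lambda_n}\|\ge c_0$ and $v_{\lambda_n}\ne0$. For $u_{\lambda_n}$, Theorem~\ref{theorem2} gives $\mathcal{E}_{\lambda_j}^1\le\mathcal{E}_{\widetilde\lambda}^1<0$ for $j$ large (with $\widetilde\lambda\in(0,\lambda_n)$ fixed), and $E_{\lambda_j}(u_j)\to E_{\lambda_n}(u_{\lambda_n})$ by the strong convergence, so $E_{\lambda_n}(u_{\lambda_n})<0=E_{\lambda_n}(0)$ and $u_{\lambda_n}\ne0$. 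Passing to the limit in \eqref{segunda} yields $E''_{\lambda_n}(u_{\lambda_n})(u_{\lambda_n},u_{\lambda_n})\ge0$ and $E''_{\lambda_n}(v_{\lambda_n})(v_{\lambda_n},v_{\lambda_n})\le0$.

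The heart of the argument is to promote these to strict inequalities, i.e. to show that no nontrivial critical point $w$ of $E_{\lambda_n}$ lies in $\mathcal{N}_{\lambda_n}^0$. Suppose $w$ were such a point. From $E_{\lambda_n}'(w)w=0$ and $E''_{\lambda_n}(w)(w,w)=0$ one computes $A(w)=\frac{\lambda_n(2-q)}{2p-2}B(w)$ and $\|w\|^2=\frac{\lambda_n(2p-q)}{2p-2}B(w)$, whence the maximum of the quotient defining $\Lambda_n(w)$ is attained at $t=1$ and equals $\lambda_n$; thus $w$ would minimise the $C^1$ functional $\Lambda_n$, so $\Lambda_n'(w)=0$. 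Subtracting $2\,E_{\lambda_n}'(w)=0$ from the Euler--Lagrange equation for $\Lambda_n$ then gives the pointwise identity $(2p-2)\big(I_\alpha*|w|^p\big)|w|^{p-2}w=\lambda_n(2-q)|w|^{q-2}w$, that is $\big(I_\alpha*|w|^p\big)(x)=\frac{\lambda_n(2-q)}{2p-2}\,|w(x)|^{q-p}$ on $\{w\ne0\}$. At this point the regularity result (Theorem~\ref{regular}) gives $w\in C^{1,\beta}_{loc}(\mathbb{R}^N)$, and since $w\in L^p(\mathbb{R}^N)$ the function $I_\alpha*|w|^p$ is continuous, strictly positive and tends to $0$ at infinity; combined with the decay $|w(x)|\to0$ as $|x|\to\infty$ from Proposition~\ref{salva} and with $q<p$, the right-hand side becomes unbounded along a suitable sequence while the left-hand side stays bounded and positive, a contradiction. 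I expect this step --- making the formal division by $w$ rigorous through the regularity and asymptotics of solutions, which is exactly where the nonlocal term obstructs the usual scaling reasoning --- to be the main difficulty. Once it is in place, $u_{\lambda_n}\in\mathcal{N}_{\lambda_n}^+$, $v_{\lambda_n}\in\mathcal{N}_{\lambda_n}^-$, and $\Lambda_n(u_{\lambda_n}),\Lambda_n(v_{\lambda_n})>\lambda_n$, so the fibering maps $t\mapsto E_{\lambda_n}(tu_{\lambda_n})$ and $t\mapsto E_{\lambda_n}(tv_{\lambda_n})$ each have exactly two critical points.

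To finish, I would compare $u_{\lambda_n},v_{\lambda_n}$ with projections along $\lambda_j\to\lambda_n$ as in Theorem~\ref{theorem2}: for $w\in\mathcal{N}_{\lambda_n}^\pm$ the $\mathcal{N}_{\lambda_j}^\pm$-projection $t_jw$ has $t_j\to1$ and $\mathcal{E}_{\lambda_j}^i\le E_{\lambda_j}(t_jw)\to E_{\lambda_n}(w)$, giving $E_{\lambda_n}(u_{\lambda_n})=\lim_j\mathcal{E}_{\lambda_j}^1\le\mathcal{E}_{\lambda_n}^1$ and $E_{\lambda_n}(v_{\lambda_n})=\lim_j\mathcal{E}_{\lambda_j}^2\le\mathcal{E}_{\lambda_n}^2$; with the reverse inequalities from $u_{\lambda_n}\in\mathcal{N}_{\lambda_n}^+$, $v_{\lambda_n}\in\mathcal{N}_{\lambda_n}^-$ one gets $\mathcal{E}_{\lambda_n}^1=E_{\lambda_n}(u_{\lambda_n})$ and $\mathcal{E}_{\lambda_n}^2=E_{\lambda_n}(v_{\lambda_n})$, so both levels are attained. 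The strict inequality $\mathcal{E}_{\lambda_n}^1<\mathcal{E}_{\lambda_n}^2$ then follows because the fibering map of $v_{\lambda_n}$ attains its local minimum at some $t_1<1$ with $t_1v_{\lambda_n}\in\mathcal{N}_{\lambda_n}^+$, so $\mathcal{E}_{\lambda_n}^2=E_{\lambda_n}(v_{\lambda_n})>E_{\lambda_n}(t_1v_{\lambda_n})\ge\mathcal{E}_{\lambda_n}^1$. Finally $u_{\lambda_n},v_{\lambda_n}>0$: they are nontrivial and nonnegative (a.e. limits of positive functions), belong to $C^{1,\beta}_{loc}(\mathbb{R}^N)$ by Theorem~\ref{regular}, and satisfy $-\Delta w+V(x)w=(I_\alpha*|w|^p)w^{p-1}+\lambda_n w^{q-1}\ge0$, so the strong maximum principle gives positivity; and $u_{\lambda_n}\ne v_{\lambda_n}$ because $\mathcal{E}_{\lambda_n}^1<\mathcal{E}_{\lambda_n}^2$.
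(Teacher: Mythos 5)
Your proposal is correct and follows essentially the same route as the paper: approximation by $\lambda_j\uparrow\lambda_n$, uniform bounds and strong convergence of the two solution families, exclusion of nontrivial critical points from $\mathcal{N}_{\lambda_n}^0$ via the pointwise identity $(2p-2)\left(I_\alpha*|w|^p\right)|w|^{p-2}w=\lambda_n(2-q)|w|^{q-2}w$ together with the decay of the Choquard term (this is exactly the content of Proposition \ref{salva} and Lemma \ref{esti-I}), and the fibering/projection comparison yielding $\mathcal{E}_{\lambda_n}^1<\mathcal{E}_{\lambda_n}^2$ and positivity by the maximum principle. One small caveat: invoking Proposition \ref{salva} for the decay $|w(x)|\to 0$ at infinity is circular, since that decay is an ingredient of the very nonexistence statement you are re-deriving; it should instead be obtained, as in the paper, from the regularity result of Theorem \ref{regular}.
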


\begin{rmk}
Under hypotheses of Theorem \eqref{theorem4} it follows that $u_{\lambda_n}$ and $v_{\lambda_n}$ have negative energy. However, $u_{\lambda_n} \in \mathcal{N}_{\lambda_n}^+$ and $v_{\lambda_n} \in \mathcal{N}_{\lambda_n}^-$ are the minimizer for $\mathcal{E}_\lambda^1$ and $\mathcal{E}_\lambda^2$, respectively. The last assertion implies that $u_{\lambda_n}$ and $u_{\lambda_n}$ are distinct solutions.	
\end{rmk}

\subsection{Notation} Throughout this work we shall use the following notation:

\begin{itemize}
	\item $C$, $\tilde{C}$, $C_{1}$, $C_{2}$,... denote positive constants (possibly different).
	\item The norm in $L^{p}(\mathbb{R}^N)$ and $L^{\infty}(\mathbb{R}^N)$, will be denoted respectively by $\|\cdot\|_{p}, p \in [1, \infty),$ and $\|\cdot\|_{\infty}$.
	\item The norm in $X$ is denoted by $\|u\|^2= \Int (|\nabla u|^2 + V(x)u^2)dx, u \in X$.
\end{itemize}


\subsection{Outline} The remainder of this work is organized as follows: In the forthcoming section we consider some preliminary results together with the Nehari method for our main problem. In Section 3 we prove Theorem \ref{theorm1}. Section 4 is devoted to the proof of Theorems \ref{theorem2}, \ref{theorem3} and \ref{theorem4}. In an Appendix we consider further results for nonlocal elliptic problems involving the Choquard equation.

\section{Preliminaries}
In this section we recall some basic properties together with the Nehari method for our main problem. Firstly, we consider some powerful tools for nonlocal elliptic problems involving the Choquard equations. It is well known the Hardy-Littlewood-Sobolev inequality \cite{Muroz0} which can be stated as follows: 
\begin{lem}\label{lemaHardy} 
	Let $t,r>1$ and $ 0<\alpha<N$ such that $\frac{1}{t}+\frac{N-\alpha}{N}+\frac{1}{r}=2$. Let $\phi\in L^{t}(\mathbb{R}^N)$ and $\psi\in L^s(\Rn)$ be fixed functions. Then there exists a sharp constant $C = C(N, \alpha, t)>0$ such that 
	$$\Int\Int \dfrac{\left|\phi(x) \psi(y)\right|}{|x-y|^{N-\alpha}} dx dy\leq C||\phi||_t||\psi||_s.$$ 
	Furthermore, assuming that $t=r=\dfrac{2N}{N+\alpha}$, we mention that $$C=\pi^{N-\alpha}\dfrac{\Gamma(\frac{\alpha}{2})}{\Gamma(\frac{N+2}{2})}\left\{\dfrac{\Gamma(\frac{N}{2})}{\Gamma(N)}\right\}^{\frac{-\alpha}{N}}.$$
\end{lem}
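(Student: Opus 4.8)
The plan is to deduce the inequality from the $L^s$--$L^{t'}$ boundedness of the Riesz potential and to obtain the sharp constant, in the conformally invariant diagonal case, by symmetrization together with a stereographic change of variables. First I would recast the bilinear quantity in operator form. Writing $(T\psi)(x) = \Int |x-y|^{-(N-\alpha)}\,|\psi(y)|\, dy$, Tonelli's theorem together with H\"older's inequality gives
\begin{equation*}
\Int\Int \frac{|\phi(x)\psi(y)|}{|x-y|^{N-\alpha}}\, dx\, dy = \Int |\phi(x)|\,(T\psi)(x)\, dx \leq \|\phi\|_t\,\|T\psi\|_{t'},
\end{equation*}
where $t'$ is the conjugate exponent of $t$. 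The balance condition $\frac1t+\frac{N-\alpha}{N}+\frac1s=2$ is exactly equivalent to $\frac{1}{t'}=\frac1s-\frac{\alpha}{N}$, together with $1<s<N/\alpha$; hence it suffices to prove the scaling-consistent bound $\|T\psi\|_{t'}\le C\|\psi\|_s$.

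For the operator bound I would use Hedberg's splitting. Fixing $x$ and a radius $R>0$, I split the kernel according to $|x-y|\le R$ and $|x-y|>R$. Summing over dyadic annuli, the near part is controlled by the Hardy--Littlewood maximal function, $\int_{|x-y|\le R}|x-y|^{-(N-\alpha)}|\psi(y)|\,dy\le C R^{\alpha}(M\psi)(x)$, while the far part is controlled by H\"older, $\int_{|x-y|>R}|x-y|^{-(N-\alpha)}|\psi(y)|\,dy\le C R^{\alpha-N/s}\|\psi\|_s$, the last integral converging precisely because $1<s<N/\alpha$. Optimizing in $R$ yields the pointwise estimate $(T\psi)(x)\le C\,(M\psi)(x)^{1-s\alpha/N}\,\|\psi\|_s^{\,s\alpha/N}$, and since $t'=sN/(N-s\alpha)$ one checks $(1-s\alpha/N)t'=s$, so taking $L^{t'}$ norms gives $\|T\psi\|_{t'}\le C\,\|M\psi\|_s^{1-s\alpha/N}\|\psi\|_s^{\,s\alpha/N}$. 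The maximal theorem ($s>1$) then yields $\|T\psi\|_{t'}\le C\|\psi\|_s$, which establishes the inequality for some finite constant $C$.

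To identify the \emph{sharp} constant in the diagonal case $t=r=2N/(N+\alpha)$ I would follow Lieb's argument. By the Riesz rearrangement inequality the left-hand side does not decrease when $\phi,\psi$ are replaced by their symmetric decreasing rearrangements, while the right-hand side is unchanged; hence it suffices to maximize over radial decreasing functions, and by the symmetry of the two exponents one may further reduce to $\phi=\psi$. The decisive point is conformal invariance: in this exponent range the functional is invariant under the conformal group, and the inverse stereographic projection onto $S^N$ transports the problem to a sharp inequality on the sphere whose extremizers are the constants. Pulling back, the extremizers on $\Rn$ are the conformal bubbles $(\mu^2+|x-x_0|^2)^{-(N+\alpha)/2}$, and substituting one of these and evaluating the resulting Beta-type integral in terms of the Gamma function produces the stated value of $C$.

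The main obstacle is the sharp-constant step, not the boundedness. Establishing that symmetric decreasing rearrangement reduces the problem to radial functions (via the Riesz rearrangement inequality together with the uniqueness/strictness analysis needed to locate the extremizers), proving that conformal invariance forces the extremizer to be a conformal bubble, and then carrying out the explicit Gamma-function computation that yields the displayed constant are all genuinely delicate. Since this statement is classical and is quoted from \cite{Muroz0}, within the present paper it is enough to cite that reference; the sketch above indicates the route one would take for a self-contained proof.
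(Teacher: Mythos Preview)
Your sketch is correct and follows the standard route (Hedberg's pointwise bound via the maximal function for boundedness, Lieb's rearrangement and conformal-invariance argument for the sharp constant in the diagonal case). However, the paper itself does not prove this lemma at all: it is stated as a well-known fact and attributed to \cite{Muroz0} with no argument given. So there is nothing to compare beyond noting that you supply a self-contained outline where the paper simply cites the literature; you already anticipated this at the end of your proposal.
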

\begin{rmk} \label{rmHardy} It follows from Lemma \ref{lemaHardy} that $I_{\alpha} * \psi \in L^{\frac{Ns}{N-\alpha s}}(\Rn)$ holds for all $\psi \in L^{s}(\mathbb{R}^N)$. Furthermore, we know that
	$$	||I_{\alpha}*\psi||_{\frac{Ns}{N-\alpha s}}\leq A_\alpha(N)C(N,\alpha,s)||\psi||_s$$
	holds for each $s\in(1,\frac{N}{\alpha})$ where $A_{\alpha}(N) > 0$, see for instance \cite{Muroz0}. 
\end{rmk}
	For the next result we shall prove that any critical point $u \in X$ for the energy functional $E_\lambda$ is a weak solution for the Problem \eqref{eq1}. More precisely, we consider the following result
	\begin{prop}\label{ahead}
		Suppose $(Q)$ and $(V_1)-(V_2)$. Assume that $u \in X$ is a critical point for the functional $E_\lambda$, that is, $E'_\lambda (u) \psi = 0$ for any $\psi \in X$. Then we obtain 
		\begin{equation*}
		\int_{ \mathbb{R}^{N}}\left(\nabla u \nabla \psi + V(x) u \psi\right)dx=\int_{ \mathbb{R}^{N}} g(u)\psi dx
		\end{equation*} 
		holds for any $\psi \in H^1(\mathbb{R}^N)$ where $g(u) = (I_\alpha* |u|^p)|u|^{p-2}u + \lambda |u|^{q -2}u$.
	\end{prop}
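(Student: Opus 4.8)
The plan is to argue that the identity $E'_\lambda(u)\psi = 0$, which by hypothesis holds for all $\psi \in X$, extends by density to all $\psi \in H^1(\mathbb{R}^N)$. The natural strategy is: (1) show $X$ is dense in $H^1(\mathbb{R}^N)$, or at least that $C_c^\infty(\mathbb{R}^N) \subset X$ and that each term in $E'_\lambda(u)\psi$, viewed as a linear functional of $\psi$, is continuous with respect to the $H^1$-norm; (2) conclude that the functional $\psi \mapsto \langle u,\psi\rangle - \int (I_\alpha*|u|^p)|u|^{p-2}u\,\psi\,dx - \lambda\int|u|^{q-2}u\,\psi\,dx$ vanishes on a dense subset of $H^1(\mathbb{R}^N)$ and hence everywhere. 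The key observation making step (2) work is that, because $u \in X \subset H^1(\mathbb{R}^N)$ is \emph{fixed}, the potential term $\int V(x)u\psi\,dx$ need not be estimated by $\|\psi\|$ (the $X$-norm) but only by $\|\psi\|_{H^1}$: indeed one rewrites $\langle u,\psi\rangle = \int \nabla u\nabla\psi\,dx + \int V(x)u\psi\,dx$ and the second integral must be shown to be $H^1$-continuous in $\psi$.

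First I would record that $C_c^\infty(\mathbb{R}^N) \subset X$ (since $V$ is continuous, hence locally bounded, and any $\psi \in C_c^\infty$ has compact support) and that $C_c^\infty(\mathbb{R}^N)$ is dense in $H^1(\mathbb{R}^N)$. Then I would estimate the three terms of $E'_\lambda(u)\psi$ for $\psi \in C_c^\infty(\mathbb{R}^N)$, aiming for bounds of the form $|\cdot| \le C(u)\|\psi\|_{H^1}$. The gradient term $\int\nabla u\nabla\psi$ is trivially bounded by $\|u\|_{H^1}\|\psi\|_{H^1}$. For the concave term, $1 < q < 2$ gives $q-1 \in (0,1)$, so $|u|^{q-1} \in L^{q'}$ by the continuous embedding $H^1 \hookrightarrow L^r$, $r \in [1,2^*]$ (taking $r = (q-1)q'$, which lies in this range), and Hölder yields $|\lambda\int |u|^{q-2}u\,\psi| \le \lambda\||u|^{q-1}\|_{q'}\|\psi\|_q \le C(u)\|\psi\|_{H^1}$. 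For the Choquard term, I would invoke the Hardy–Littlewood–Sobolev inequality (Lemma \ref{lemaHardy}) and Remark \ref{rmHardy}: since $p \in (2_\alpha, 2^*_\alpha)$, one has $|u|^p \in L^{2N/(N+\alpha)}(\mathbb{R}^N)$ by the Sobolev embedding, so $I_\alpha*|u|^p \in L^{2N/(N-\alpha)}(\mathbb{R}^N)$; then $(I_\alpha*|u|^p)|u|^{p-2}u \in L^{(2^*)'}(\mathbb{R}^N)$ follows from another Hölder estimate (checking that $p-1$ times the relevant Sobolev exponent is admissible, which is exactly where $p < 2^*_\alpha$ is used), and pairing with $\psi \in L^{2^*}$ gives the desired bound $C(u)\|\psi\|_{H^1}$.

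Having these three continuity estimates, the linear functional $T(\psi) := \langle u,\psi\rangle - \int (I_\alpha*|u|^p)|u|^{p-2}u\,\psi\,dx - \lambda\int|u|^{q-2}u\,\psi\,dx$ is well-defined and continuous on $H^1(\mathbb{R}^N)$, and it agrees with $E'_\lambda(u)\psi$ for $\psi \in X$, in particular $T(\psi)=0$ for all $\psi \in C_c^\infty(\mathbb{R}^N)$. By density of $C_c^\infty$ in $H^1$ and continuity of $T$, we get $T(\psi)=0$ for all $\psi\in H^1(\mathbb{R}^N)$, which is precisely the claimed identity with $g(u) = (I_\alpha*|u|^p)|u|^{p-2}u + \lambda|u|^{q-2}u$. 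I expect the main technical obstacle to be the Choquard term: one must carefully track the chain of exponents through HLS and two applications of Hölder to verify that $(I_\alpha*|u|^p)|u|^{p-2}u$ lands in the dual space $L^{(2^*)'}(\mathbb{R}^N)$ (or some $L^s$ with $s$ between $(2^*)'$ and $1$), and this is exactly where the upper bound $p < 2^*_\alpha = (N+\alpha)/(N-2)$ enters; the lower bound $p > 2_\alpha$ guarantees $|u|^p \in L^{2N/(N+\alpha)}$ so that HLS applies in the first place. The concave term is comparatively routine, using only $H^1 \hookrightarrow L^r$ for $r\in[1,2^*]$ and $1<q<2$.
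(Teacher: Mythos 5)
Your density scheme works for two of the three terms, but it silently fails at the one term that is the whole point of the proposition: the potential term. Your plan requires showing that $\psi \mapsto \int_{\mathbb{R}^N} V(x)u\psi\,dx$ is continuous with respect to the $H^1$-norm; you state that this ``must be shown'' and then never show it, and in fact under $(V_1)$--$(V_2)$ it is not available. The hypotheses allow $V$ to be unbounded (indeed $V^{-1}\in L^1$ essentially forces $V\to\infty$ on average), and the only information on $u$ relevant to this term is $\int V u^2\,dx<\infty$. The natural estimate, Cauchy--Schwarz with weight $V$, gives $\int V|u||\psi|\,dx\le \bigl(\int Vu^2\,dx\bigr)^{1/2}\bigl(\int V\psi^2\,dx\bigr)^{1/2}$, which requires $\psi\in X$, not merely $\psi\in H^1$; there is no Sobolev/H\"older route to a bound of the form $C(u)\|\psi\|_{H^1}$, since $Vu$ need not lie in $L^2+L^{(2^*)'}$ when $V$ blows up. Even the qualitative statement $\int V|u||\psi|\,dx<\infty$ for every $\psi\in H^1$ is not automatic from $u\in X$, so the linear functional $T$ you want to extend by density is not known to be well defined, let alone bounded, on $H^1(\mathbb{R}^N)$. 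Your treatment of the gradient, concave and Choquard terms (HLS plus Sobolev, using $2_\alpha<p<2^*_\alpha$ and $1<q<2$) is fine, but those terms were never the obstruction.

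The paper circumvents exactly this difficulty without any $H^1$-continuity claim for the potential term: for $\psi\in H^1(\mathbb{R}^N)$ with a fixed sign it tests the equation with $\eta_k\psi\in X$, where $\eta_k$ are standard cutoffs, passes to the limit in the gradient and nonlinear terms by the strong convergence $\eta_k\psi\to\psi$ in $H^1$ together with the subcritical growth (the part you did prove), and handles $\int V u\,\eta_k\psi\,dx$ by the Monotone Convergence Theorem after splitting according to the sign of $u$, finally writing a general $\psi$ as $\psi^++\psi^-$. If you want to salvage your argument you would have to incorporate such a truncation-plus-monotonicity step (or otherwise prove $Vu\psi\in L^1$ for all $\psi\in H^1$); as written, the density argument has a genuine gap at the potential term.
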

	\begin{proof}
		Let $\psi \in H^1(\mathbb{R}^N)$ be a fixed nonnegative function. Consider $(\eta_{k})_{k\in\mathbb{N}}$ a sequence of functions in $C^{\infty}(\mathbb{R}^N)$ such that
		\begin{equation*}
		\begin{gathered}
		\begin{cases}
		0\leq \eta_{k}(x)\leq 1\quad\mbox{for all}\quad x\in\mathbb{R}^N,\\
		\eta_{k}(x)=1\quad\mbox{if}\quad |x|\leq k,\\
		\eta_{k}(x)=0\quad\mbox{if}\quad |x|\geq k+1,\\
		\left\lvert\nabla\eta_{k}(x)\right\rvert\leq C\quad\mbox{for all}\quad x\in\mathbb{R}^N.
		\end{cases}
		\end{gathered}
		\end{equation*}
		It is easy to verify that $\eta_{k}(x)\psi\in H^1(\mathbb{R}^N)$, $\eta_{k}(x)\psi(x)\to\psi(x)$ a.e. $x\in\mathbb{R}^N$. Furthermore, we mention that $\eta_{k}(x)\psi\to\psi$ in $H^1(\mathbb{R}^N)$. Hence, by using the continuous Sobolev embedding $H^1(\mathbb{R}^N)\hookrightarrow L^r(\mathbb{R}^N), r \in [2, 2^*], 2^* = 2 N/(N - 2)$, we obtain $\eta_{k}(x)\psi\to\psi$ in $L^s(\mathbb{R}^N)$ for each $2\leq s\leq 2^*$. Moreover, by using the fact that $V$ is locally bounded and $\eta_{k}(x)\psi$ has compact support, we infer that $\eta_{k}(x)\psi\in X$ holds for each $k\in\mathbb{N}$. As a product 
		\begin{equation}\label{q}
		\int_{ \mathbb{R}^{N}}\nabla u\nabla \left(\eta_{k}(x)\psi\right)+V(x) u ( \eta_{k}(x)\psi ) dx=\int_{ \mathbb{R}^{N}}g(u)\eta_{k}(x)\psi dx.
		\end{equation} 
		Using the strong convergence in $H^1(\mathbb{R}^N)$ and taking into account that $g$ is subcritical we deduce that 
		\begin{eqnarray}\label{q1}
		\lim_{k\to\infty}\int_{ \mathbb{R}^{N}}\nabla u \nabla \left(\eta_{k}(x)\psi\right) dx&=&\int_{ \mathbb{R}^{N}}\nabla u\nabla \psi dx, \, \,
\lim_{k\to\infty}\int_{ \mathbb{R}^{N}}g(u)\eta_{k}(x)\psi dx =\int_{ \mathbb{R}^{N}}g(u)\psi dx\nonumber\\
		\end{eqnarray}
		On the other hand, we observe that 
		\begin{eqnarray*}
			0\geq V(x)u \eta_{k}(x)\psi(x)\geq V(x) u \eta_{k+1}(x)\psi(x)\quad \mbox{a.e}\quad x\in[u\leq0]\nonumber\\
			0\leq V(x) u \eta_{k}(x)\psi(x)\leq V(x)u \eta_{k+1}(x)\psi(x)\quad \mbox{a.e}\quad x\in[u\geq0]\nonumber 
		\end{eqnarray*}
		holds for all $k\in\mathbb{N}$. Now, by using the last assertion together with the Monotone Convergence Theorem, we get 
		\begin{equation}\label{q2}
		\lim_{k\to\infty}\int_{ \mathbb{R}^{N}}V(x) u \eta_{k}(x)\psi dx=\int_{ \mathbb{R}^{N}}V(x) u \psi dx.
		\end{equation}
		Taking the limit in \eqref{q} and using the convergences given in \eqref{q1} and \eqref{q2}, we see that 
		\begin{equation*}
		\int_{ \mathbb{R}^{N}}\left(\nabla u \nabla \psi+V(x) u \psi\right)dx=\int_{ \mathbb{R}^{N}}g(u) \psi dx
		\end{equation*} 
		holds true for each $\psi\in H^1(\mathbb{R}^N)$ satisfying $\psi\geq 0$ in $\mathbb{R}^N$. Analogously, we prove the same identity for each $\psi \in H^1(\mathbb{R}^N)$ such that $\psi \leq0$ in $\mathbb{R}^N$. The proof for the general case $\psi\in H^1(\mathbb{R}^N)$ follows immediately writing $\psi=\psi^+ + \psi^-$ where $\psi^+= \max(\psi, 0)$ and $\psi^-= \min(\psi, 0)$. This ends the proof.
	\end{proof}

Now we shall consider the Nehari method for our main Problem \eqref{eq1}. Initially, our main objective is to analyze the geometry for the fibering function $\phi : [0, \infty) \rightarrow \mathbb{R}$ given by $\phi (t)=E_\lambda(tu), t \geq 0, u \in X \setminus \{0\}$. This function was introduced in \cite{Pohozaev}. Here we refer the interested also to \cite{brow0,brow1,Pokhozhaev}. It is important to recall that the fibering map 
  is related with the Nehari manifold $\mathcal{N}_\lambda$ which was defined in the introduction in the following form:  
  \begin{equation}
  \begin{array}{rcl}
  \mathcal{N_\lambda}&:=&\{ u \in X\setminus \{0\} : \phi'(1)=E_\lambda'(u)u=0 \}\\[3ex]
  &=&\left\{ u\in  X\setminus \{0\}:||u||^2-\Int\left(I_\alpha*|u|^p\right)|u|^pdx=\lambda{\Int|u|^qdx}\right\}
  \end{array}
  \label{nehari}.
  \end{equation}
 More specifically, the critical point for the fibering map $\phi$ provide us an element on the Nehari manifold. In fact, for each $u \in X \setminus\{0\}$, there exists an unique $t > 0$ such that $t u \in \mathcal{N}_\lambda$ provided that $E''_\lambda(tu)(tu,tu) < 0$ for each $\lambda \in (0, \lambda_n)$. The same property remains true for each $u \in X \setminus \{0\}$ such that $E''_\lambda(tu)(tu,tu) > 0$ for each $\lambda \in (0, \lambda_n)$. Furthermore, any critical point $u\in X\setminus\{0\}$ for the energy functional $E_\lambda$ belongs to $\mathcal{N}_\lambda$. Under these conditions, as was quoted in the introduction, we shall split the Nehari manifold $\mathcal{N}_\lambda$ into three disjoint subsets in the following way:
\begin{eqnarray}
\mathcal{N_\lambda}^+&=&\{u\in\mathcal{N}_\lambda:\phi''(1)=E''_\lambda(u)(u,u)>0 \}\label{n+}\\ 
\mathcal{N_\lambda}^-&=&\{u\in\mathcal{N}_\lambda:\phi''(1)=E''_\lambda(u)(u,u)<0 \}\label{n-}\\
\mathcal{N_\lambda}^0&=&\{u\in\mathcal{N}_\lambda:\phi''(1)=E''_\lambda(u)(u,u)=0\}\label{n0}.
\end{eqnarray}
More generally, we see that $t u \in \mathcal{N}_\lambda$ if and only if $\phi'(t) = E'_\lambda(t u) u = 0$. This can be checked using the definition just above together with the claim rule. Hence critical points for the fibering map $\phi$ give us functions in the Nehari manifold. At this stage, we shall define the following set
\begin{eqnarray}\label{conjE}
\mathcal{E}&:=&\{u\in X\setminus \{0\}:E_\lambda(u)=0 \} \nonumber \\
&=&\left\{ u\in  X\setminus \{0\}:\dfrac{1}{2}||u||^2-\dfrac{1}{2p}\Int\left(I_\alpha*|u|^p\right)|u|^pdx=\dfrac{\lambda}{q}{\Int|u|^qdx}\right\}.
\end{eqnarray}

It is important to stress that equations \eqref{nehari} and \eqref{conjE} allow us to define the nonlinear generalized Rayleigh
quotients which have been explored in the last years, see \cite{yavdat1}. More specifically, we consider the functionals $R_n,R_e: X\setminus\{0\} \to \mathbb{R}$ 
associated with the parameter $\lambda>0$ in the following form 
\begin{equation}\label{Rn}
R_n(u)=\dfrac{||u||^2-\Int\left(I_\alpha*|u|^p\right)|u|^pdx}{\Int|u|^qdx}, u \in X \setminus \{0\}
\end{equation}
and
\begin{equation}\label{Re}
{R}_e(u)=\dfrac{\frac{1}{2}||u||^2-\frac{1}{2p}\Int\left(I_\alpha*|u|^p\right)|u|^pdx}{\frac{1}{q}\Int|u|^qdx}, u \in X \setminus \{0\}.
\end{equation}
It is easy to ensure that $R_e,R_n$ belongs to $C^1(X \setminus \{0\}, \mathbb{R})$. Moreover, the functionals $R_e,R_n$ are related with the energy functional $E_\lambda$ and its derivatives. Hence we consider the following informations:

\begin{rmk}\label{rmk1}
Let $u\in X\setminus\{0\}$ be fixed. It is not hard to verify that \eqref{nehari} implies
		\begin{itemize}\item[i)]$R_n(u)=\lambda$ if and only if $E_\lambda'(u)u=0$,
			\item[ii)]$R_n(u) > \lambda$ if and only if $E_\lambda'(u)u > 0$,
			\item[iii)]$R_n(u) < \lambda$ if and only if $E_\lambda'(u)u < 0$.
			\end{itemize}
	\end{rmk}
Similarly, we also consider the following remark:
\begin{rmk}\label{rmk11} Let $u\in X\setminus\{0\}$ be fixed. It is easy to verify that \eqref{conjE} implies
\begin{itemize}
\item[i)] ${R}_e(u)=\lambda$ if and only if $E_\lambda(u)=0$, 
\item[ii)]$R_e(u) > \lambda$ if and only if $E_\lambda (u) > 0$.
\item[iii)]$R_e(u) < \lambda$ if and only if $E_\lambda (u) < 0$.
\end{itemize}
\end{rmk}

Now we define the auxiliary functional $G: X \rightarrow \mathbb{R}$ given by $G(u) = \int_{\mathbb{R}^N} |u|^q dx, u \in X$. In this way, we also mention that 
\begin{equation}\label{aee}
\dfrac{d}{d t} R_e(tu) = \dfrac{E'_{\lambda}(tu) u}{G(tu)}, t > 0, 
\end{equation}
holds for each $u \in X \setminus \{0\}$ such that $R_e(tu) = \lambda$. In particular, we obtain the following result

\begin{prop}\label{importante} Suppose $(Q)$ and $(V_1)-(V_2)$. Assume also that $u \in X \setminus \{0\}$ satisfies $R_e(tu) = \lambda$ for some $t > 0$. Then we obtain the following assertions: 
\begin{itemize}
	\item[$i)$] $\dfrac{d}{d t} R_e(tu)  > 0$ if and only if $E'_{\lambda}(tu) tu > 0$,
	\item[$ii)$] $\dfrac{d}{d t} R_e(tu)  < 0$ if and only if $E'_{\lambda}(tu) tu < 0$,
	\item[$iii)$] $\dfrac{d}{d t} R_e(tu)  = 0$ if and only if $E'_{\lambda}(tu) tu = 0$.
\end{itemize}
\end{prop}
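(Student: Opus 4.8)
The plan is to reduce everything to a one--variable computation along the ray $t \mapsto tu$, where $R_e$ becomes an elementary function of $t$. Fix $u \in X \setminus \{0\}$ and write $R_e(tu) = \mathcal{A}(t)/\mathcal{B}(t)$ for $t > 0$, with
\begin{equation*}
\mathcal{A}(t) := \frac{t^{2}}{2}\|u\|^{2} - \frac{t^{2p}}{2p}\int_{\mathbb{R}^N}\left(I_\alpha*|u|^p\right)|u|^p\,dx,
\qquad
\mathcal{B}(t) := \frac{t^{q}}{q}\int_{\mathbb{R}^N}|u|^q\,dx .
\end{equation*}
Two elementary observations set up the argument: first, $E_\lambda(tu) = \mathcal{A}(t) - \lambda\,\mathcal{B}(t)$, so by the chain rule $\frac{d}{dt}E_\lambda(tu) = \mathcal{A}'(t) - \lambda\,\mathcal{B}'(t) = E'_\lambda(tu)u$; second, $\mathcal{B}(t) = G(tu)/q > 0$ for every $t > 0$, since $u \not\equiv 0$.

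First I would differentiate the quotient directly, getting $\frac{d}{dt}R_e(tu) = \big(\mathcal{A}'(t)\mathcal{B}(t) - \mathcal{A}(t)\mathcal{B}'(t)\big)\big/\mathcal{B}(t)^{2}$ for all $t > 0$. Next I would invoke the hypothesis: at the prescribed $t$ one has $R_e(tu) = \lambda$, i.e. $\mathcal{A}(t) = \lambda\,\mathcal{B}(t)$; substituting this in the numerator is exactly where the level-set condition enters, and it yields the cancellation
\begin{equation*}
\frac{d}{dt}R_e(tu) = \frac{\mathcal{A}'(t) - \lambda\,\mathcal{B}'(t)}{\mathcal{B}(t)} = \frac{E'_\lambda(tu)u}{\mathcal{B}(t)},
\end{equation*}
which is precisely identity \eqref{aee} (the positive factor depending only on the normalization of $G$).

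It then remains to track signs. Since $\mathcal{B}(t) > 0$, the quantity $\frac{d}{dt}R_e(tu)$ has the same sign as $E'_\lambda(tu)u$; and since $t > 0$ with $E'_\lambda(tu)(tu) = t\,E'_\lambda(tu)u$, the quantity $E'_\lambda(tu)(tu)$ has the same sign as $E'_\lambda(tu)u$. Chaining the two equivalences shows that $\frac{d}{dt}R_e(tu)$, $E'_\lambda(tu)u$ and $E'_\lambda(tu)(tu)$ are simultaneously positive, simultaneously negative, or simultaneously zero, which is exactly $(i)$, $(ii)$ and $(iii)$.

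I do not anticipate a genuine obstacle: the statement is essentially the quotient rule combined with the constraint. The only two points worth isolating are the cancellation step --- valid precisely because the evaluation point lies on the level set $\{\,s>0 : R_e(su) = \lambda\,\}$ and would generally fail otherwise --- and the strict positivity $\mathcal{B}(t) = G(tu)/q > 0$, which relies on $u \not\equiv 0$ and is what guarantees that the sign of the derivative is governed by the numerator alone.
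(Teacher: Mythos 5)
Your proof is correct and follows essentially the same route as the paper: it is the identity \eqref{aee}, obtained from the quotient rule together with the level-set condition $R_e(tu)=\lambda$, combined with the positivity of $G(tu)$ and $t>0$ to transfer signs to $E'_\lambda(tu)(tu)$. You merely write out explicitly the derivation of \eqref{aee} that the paper only cites, so nothing further is needed.
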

\begin{proof}
The proof follows immediately using \eqref{aee} the fact that that $R_e(tu) = \lambda$ for some $t > 0$ where $u \in X \setminus \{0\}$, see \cite{marcos,yavdat1}.  
\end{proof}
Analogously, we observe that 
\begin{equation}\label{save}
\dfrac{d}{d t} R_n(tu) = \frac{1}{t} \dfrac{E''_{\lambda}(tu) (tu, tu)}{G'(tu)(tu)}, t > 0, 
\end{equation}
holds for each $u \in X \setminus \{0\}$ such that $R_n(tu) = \lambda$. In particular, we obtain the following result 
\begin{prop} \label{der-Rn}
	Suppose $(Q)$ and $(V_1)-(V_2)$. Assume also that $u \in X \setminus \{0\}$ satisfies $R_n(tu) = \lambda$ for some $t > 0$. Then we obtain the following assertions: 
	\begin{itemize}
		\item[$i)$] $\dfrac{d}{d t} R_n(tu)  > 0$ if and only if $E''_{\lambda}(tu) (tu, tu) > 0$,
		\item[$ii)$] $\dfrac{d}{d t} R_n(tu)  < 0$ if and only if $E''_{\lambda}(tu) (tu, tu) < 0$,
		\item[$iii)$] $\dfrac{d}{d t} R_n(tu) = 0$ if and only if $E''_{\lambda}(tu) (tu, tu) = 0$.
	\end{itemize}
\end{prop}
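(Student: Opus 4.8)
The plan is to mimic exactly the proof of Proposition \ref{importante}: once formula \eqref{save} is available, the three equivalences are immediate because the denominator $G'(tu)(tu) = q\int_{\Rn}|tu|^q\,dx = q t^q \int_{\Rn}|u|^q\,dx$ is strictly positive for every $t>0$ and every $u\in X\setminus\{0\}$, and the factor $1/t$ is positive as well. Hence the sign of $\frac{d}{dt}R_n(tu)$ coincides with the sign of $E''_\lambda(tu)(tu,tu)$, which gives items $i)$, $ii)$, $iii)$ at once. So the only real content is to justify \eqref{save} under the standing hypothesis $R_n(tu)=\lambda$.

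First I would compute the derivative of $R_n$ directly from \eqref{Rn}. Writing $A(u)=\|u\|^2-\int_{\Rn}(I_\alpha*|u|^p)|u|^p\,dx$ and $B(u)=\int_{\Rn}|u|^q\,dx$, one has $R_n(tu)=A(tu)/B(tu)$ with $A(tu)=t^2\|u\|^2-t^{2p}\int_{\Rn}(I_\alpha*|u|^p)|u|^p\,dx$ and $B(tu)=t^q\int_{\Rn}|u|^q\,dx$. Differentiating in $t$,
\begin{equation*}
\frac{d}{dt}R_n(tu)=\frac{A'(tu)(u)\,B(tu)-A(tu)\,B'(tu)(u)}{B(tu)^2}.
\end{equation*}
Now observe that $A'(tu)(u)=\frac{1}{t}\,E''_\lambda(tu)(tu,tu)+\frac{1}{t}\,\lambda\,B'(tu)(tu)\cdot\frac{1}{?}$ — more cleanly, I would use the algebraic identities $A'(tu)(tu)=2t^2\|u\|^2-2p\,t^{2p}\int(I_\alpha*|u|^p)|u|^p$ and compare with \eqref{segunda} evaluated at $tu$, namely $E''_\lambda(tu)(tu,tu)=t^2\|u\|^2-(2p-1)t^{2p}\int(I_\alpha*|u|^p)|u|^p-\lambda(q-1)t^q\int|u|^q$. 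Using the hypothesis $R_n(tu)=\lambda$, i.e. $A(tu)=\lambda\,B(tu)=\lambda t^q\int|u|^q$ (equivalently $tu\in\mathcal N_\lambda$ by Remark \ref{rmk1}), one substitutes $A(tu)$ in the numerator above and, after collecting terms, the numerator becomes a multiple of $E''_\lambda(tu)(tu,tu)$; dividing through by the appropriate power of $t$ and by $B(tu)$ yields precisely \eqref{save}. This is the step requiring care, since it is the only place where the constraint $R_n(tu)=\lambda$ is used and where the bookkeeping of the exponents $2$, $2p$, $q$ must be done correctly; everything else is formal.

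The main (and essentially only) obstacle is therefore the bookkeeping in that substitution: making sure that, after using $A(tu)=\lambda B(tu)$, the combination $A'(tu)(u)B(tu)-A(tu)B'(tu)(u)$ collapses to $\frac{1}{t}B(tu)\,E''_\lambda(tu)(tu,tu)$ and not to something merely proportional to $E'_\lambda$. A convenient shortcut, which I would actually use to avoid recomputing, is the following: since $\frac{d}{dt}[t^{-q}A(tu)]=\frac{d}{dt}R_n(tu)\cdot\frac{\int|u|^q}{\cdots}$ up to a positive constant, and $t^{-q}A(tu)=t^{2-q}\|u\|^2-t^{2p-q}\int(I_\alpha*|u|^p)|u|^p$, one differentiates this scalar function of $t$ and recognizes $(2-q)t^{1-q}\|u\|^2-(2p-q)t^{2p-q-1}\int(I_\alpha*|u|^p)|u|^p=\frac{1}{t}\big[(2-q)t^{2-q}\|u\|^2-(2p-q)t^{2p-q}\int(\cdots)\big]$, and then rewrites the bracket using $t^2\|u\|^2-t^{2p}\int(\cdots)=\lambda t^q\int|u|^q$ to eliminate one of the two terms and recover $E''_\lambda(tu)(tu,tu)$ up to the factor $1/t$ and the positive constant $\int|u|^q$ absorbed into $G'(tu)(tu)$. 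Either way the conclusion is \eqref{save}, and the proposition follows exactly as Proposition \ref{importante} did from \eqref{aee}. I would simply cite \cite{marcos,yavdat1} for the routine verification, in parallel with the proof of Proposition \ref{importante}.
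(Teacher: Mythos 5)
Your proposal is correct and follows essentially the same route as the paper, which proves the proposition simply by invoking the identity \eqref{save} together with $R_n(tu)=\lambda$ and the positivity of $1/t$ and of the denominator; your extra step of actually verifying \eqref{save} is sound, since under the constraint $A(tu)=\lambda B(tu)$ the quotient-rule numerator $\frac{d}{dt}A(tu)\,B(tu)-A(tu)\,\frac{d}{dt}B(tu)$ does collapse to $\frac{1}{t}B(tu)\,E''_\lambda(tu)(tu,tu)$. (Carrying out that bookkeeping one finds the denominator is $G(tu)$ rather than $G'(tu)(tu)$, i.e.\ \eqref{save} holds up to the positive factor $q$, which is irrelevant for the sign equivalences you need.)
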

\begin{proof}
The proof follows from \eqref{save} together with the identity $R_n(tu) = \lambda$ for some $t > 0$ with $u \in X$, see \cite{marcos,yavdat1}.
\end{proof}

At this stage, we consider the fibering function for each $t>0$ given by
\begin{equation*}
Q_n(t)=R_n(tu)=\dfrac{t^2||u||^2-t^{2p}\Int\left(I_\alpha*|u|^p\right)|u|^pdx}{t^q\Int|u|^qdx}=\dfrac{1}{||u||^q_q}\left[t^{2-q}||u||^2-t^{2p-q}\Int\left(I_\alpha*|u|^p\right)|u|^pdx\right].
\end{equation*}
As a consequence, we obtain the following identity 
\begin{equation}\label{derivada}
Q_n'(t)= \dfrac{1}{||u||^q_q}\left[(2-q)t^{1-q}||u||^2-(2p-q)t^{2p-q-1}\Int\left(I_\alpha*|u|^p\right)|u|^pdx\right].
\end{equation}
It is not difficulty to determine that the unique critical point of $Q_n$ is given by
  \begin{equation}\label{tn}
	  t_n(u)=\left[\dfrac{(2-q)}{(2p-q)}\dfrac{||u||^2}{\Int\left(I_\alpha*|u|^p\right)|u|^pdx}\right]^{\frac{1}{2p-2}}.
 \end{equation}
 Under these conditions, we obtain also that 
  	 \begin{equation}\label{Qtc}
  	 Q_n(t_n(u))=C_{p,q}\dfrac{||u||^{\frac{2p-q}{p-1}}}{||u||^q_q\left[\Int (I_\alpha*|u|^p)|u|^pdx\right]^{\frac{2-q}{2p-2}}}
  	 \end{equation}
  where $$C_{p,q} = \left(\frac{2-q}{2p-q}\right)^{\frac{2-q}{2p-2}}\left(\frac{2p-2}{2p-q}\right).$$
  Similarly, using the same ideas discussed just above, we show that
  	$Q'_n(t)>0$ for each $t\in(0,t_n(u))$ and $Q_n'(t)<0$ for each $ t>t_n(u).$
  Furthermore, we observe that $Q_n(0)=0$ and 
  \begin{equation}
  \lim_{t\to 0} \frac{Q_n(t)}{t^{2-q}} > 0,  \lim_{t\to+\infty}Q_n(t)=-\infty.
  \end{equation}
  	 Thus, the number $t_n(u)>0$ is the unique critical point for $Q_n$ and $Q_n(t_n(u))=\displaystyle\max_{t>0}Q_n(t)$. It is important to emphasize that $Q_e$ has a similar behavior. More precisely, we consider the function
\begin{equation*}
{Q}_e(t)={R}_e(tu)=\dfrac{\dfrac{t^2}{2}||u||^2-\dfrac{t^{2p}}{2p}\Int\left(I_\alpha*|u|^p\right)|u|^pdx}{\dfrac{t^q}{q}\Int|u|^qdx}=\dfrac{q}{||u||^q_q}\left[\dfrac{t^{2-q}}{2}||u||^2-\dfrac{t^{2p-q}}{2p}\Int\left(I_\alpha*|u|^p\right)|u|^pdx\right].
\end{equation*}
As a consequence, the derivative is given by the following identity
\begin{equation}\label{derivada1}
{Q}_e'(t)= \dfrac{q}{||u||^q_q}\left[\dfrac{(2-q)}{2}t^{1-q}||u||^2-\dfrac{(2p-q)}{2p}t^{2p-q-1}\Int\left(I_\alpha*|u|^p\right)|u|^pdx\right]
\end{equation}
As was done before we show that ${Q}_e'(t)=0$ if and only if $t = t_e(u)$ where
\begin{equation}\label{T}
t_e(u)=\left[p\dfrac{(2-q)}{(2p-q)}\dfrac{||u||^2}{\Int\left(I_\alpha*|u|^p\right)|u|^pdx}\right]^{\frac{1}{2p-2}}.
\end{equation}
Once more we ensure that ${t}_e(u) > 0$ is the unique critical point of ${Q}_e$ and $ {Q_e}({t}_e(u))=\displaystyle\max_{t>0}{Q_e}(t)$. Here we mention that  
\begin{equation}
	{Q}_e({t}_e(u))=\tilde{C}_{p,q}\dfrac{||u||^{\frac{2p-q}{p-1}}}{||u||^q_q\left[\Int (I_\alpha*|u|^p)|u|^pdx\right]^{\frac{2-q}{2p-2}}}
\end{equation} 
where $$\tilde{C}_{p,q}=\left(p\dfrac{2-q}{2p-q}\right)^{\frac{2-q}{2p-2}}\left(q\dfrac{p-1}{2p-q}\right) > 0.$$

\begin{rmk}\label{Qe=Qn}
	It is worthwhile to mention that 
	 $R_n(tu)={R}_e(tu)$ if and only if $t={t}_e(u)$.
 In fact, the identity $R_n(tu)={R_e}(tu)$ is equivalent to the following expression
$$ t^{2-q}||u||^2\left(\dfrac{2-q}{2}\right)-t^{2p-q}\left(\dfrac{2p-q}{2p}\right)\Int(I_\alpha*|u|^p)|u|^pdx=0.$$
In view of the last identity we also obtain
$$
0=\dfrac{q}{||u||_q^q}\left(t^{1-q}||u||^2\left(\dfrac{2-q}{2}\right)-t^{2p-q-1}\left(\dfrac{2p-q}{2p}\right)\Int(I_\alpha*|u|^p)|u|^pdx\right)=({R}_e(tu))', t > 0.
$$
As a consequence,  $R_n(tu)={R}_e(tu)$ if and only if $t={t}_e(u)$. Furthermore, we also mention that $R_n(tu) > R_e(tu)$ for each $t \in (0, t_e)$. In the same way, we observe that $R_n(tu)  < R_e(tu)$ for each $t \in (t_e, \infty)$, see Figure 4. 
\end{rmk}
Under these conditions we are able to ensure the following result

\begin{lem}\label{Lambda} 
	Suppose $(Q)$ and $(V_1)-(V_2)$. Let $$\Lambda_n(u):= {Q_n}(t_n(u))=C_{p,q}\dfrac{||u||^{\frac{2p-q}{p-1}}}{||u||^q_q\left[\Int (I_\alpha*|u|^p)|u|^pdx\right]^{\frac{2-q}{2p-2}}}.$$
Then we obtain the following statements:
	\begin{itemize}
		\item[i)] $\Lambda_n(u) $ is $0$-homogeneous, i.e., $\Lambda_n(tu) = \Lambda_n(u)$ for each $t > 0, u \in X\setminus\{0\}$;
		\item[ii)] There exists $u \in X\setminus\{0\}$ such that $\lambda_{n}=\Lambda_n(u)=\displaystyle\inf_{v\in X} \Lambda_n(v)$. Furthermore, we obtain that $\lambda_n > 0$.
		\item[iii)] The function $u$ given by the previous item is a weak solutions for the following elliptic problem
		\begin{equation}\label{eq-lambda-n}
		\left\{
		\begin{array}{rcl}
		-2\Delta u  +2V(x) u &=& 2p (I_\alpha* |u|^p)|u|^{p-2}u+ q\lambda_n |u|^{q-2}u \, \mbox{ in }\,    \mathbb{R}^N,  \\
		u\in X&&
		\end{array}
		\right.
		\end{equation}	
	\end{itemize}
\end{lem}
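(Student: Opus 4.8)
The plan is to handle the three items in order, using the $0$-homogeneity to reduce the minimization to the unit sphere and the explicit formula \eqref{tn} for $t_n$ to pin down which rescaling of a minimizer solves \eqref{eq-lambda-n}. For item (i), I will simply substitute $tu$ into the closed form of $\Lambda_n$: since $\|tu\|^2=t^2\|u\|^2$, $\|tu\|_q^q=t^q\|u\|_q^q$ and $\Int(I_\alpha*|tu|^p)|tu|^pdx=t^{2p}\Int(I_\alpha*|u|^p)|u|^pdx$, the power of $t$ in the numerator of $\Lambda_n$ is $\frac{2p-q}{p-1}$ while the power of $t$ in the denominator is $q+2p\cdot\frac{2-q}{2p-2}$; a one-line algebraic identity shows these agree, so $\Lambda_n(tu)=\Lambda_n(u)$ for all $t>0$.

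For item (ii) I use the homogeneity from (i) to write $\lambda_n=\inf_{\|u\|=1}\Lambda_n(u)=C_{p,q}/\sup_{\|u\|=1}F(u)$, where $F(u):=\|u\|_q^q\,[\Int(I_\alpha*|u|^p)|u|^pdx]^{\frac{2-q}{2p-2}}$, so everything reduces to showing $F$ attains a finite positive maximum on the unit sphere $S$. The bound $\lambda_n>0$ (equivalently $\sup_SF<\infty$) comes from $\|u\|_q^q\le C\|u\|^q$ (continuous embedding $X\hookrightarrow L^q(\Rn)$) and $\Int(I_\alpha*|u|^p)|u|^pdx\le C\|u\|_{2Np/(N+\alpha)}^{2p}\le C\|u\|^{2p}$ (Hardy--Littlewood--Sobolev, Lemma~\ref{lemaHardy}, together with $X\hookrightarrow L^{2Np/(N+\alpha)}(\Rn)$, which is legitimate because $p\in(2_\alpha,\2_\alpha)$ gives $2Np/(N+\alpha)\in(2,2^*)$); inserting these into $\Lambda_n$ and using the exponent identity of (i), the powers of $\|u\|$ cancel and $\Lambda_n(u)\ge c_0>0$. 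For attainment I take a maximizing sequence $(u_k)\subset S$ for $F$; it is bounded, so $u_k\rightharpoonup u$ in $X$ up to a subsequence, and by the compact embedding $X\hookrightarrow L^r(\Rn)$ for $r\in[1,2^*)$ we get $u_k\to u$ strongly in $L^q(\Rn)$ and in $L^{2Np/(N+\alpha)}(\Rn)$; the latter forces $|u_k|^p\to|u|^p$ in $L^{2N/(N+\alpha)}(\Rn)$ and then, by continuity of the bilinear form in Lemma~\ref{lemaHardy}, $\Int(I_\alpha*|u_k|^p)|u_k|^pdx\to\Int(I_\alpha*|u|^p)|u|^pdx$. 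Hence $F(u_k)\to F(u)$, so $F(u)=\sup_SF>0$ and in particular $u\ne0$; since $\|u\|\le\liminf_k\|u_k\|=1$ and $\Lambda_n$ is $0$-homogeneous, $\Lambda_n(u)\le\lambda_n$, whence $\Lambda_n(u)=\lambda_n$ and $u$ is a minimizer (replacing $u$ by $|u|$ one may take $u\ge0$).

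For item (iii), let $u_0$ be the minimizer from (ii). As $X\setminus\{0\}$ is open in $X$ and $\Lambda_n\in C^1(X\setminus\{0\},\mathbb{R})$, $u_0$ is an interior minimum point, so $\Lambda_n'(u_0)=0$. The key is to differentiate the identity $\Lambda_n(v)=Q_n(t_n(v))=R_n(t_n(v)v)$: since $t_n(v)$ is the critical point of $\tau\mapsto Q_n(\tau)=R_n(\tau v)$, the term carrying $t_n'(v)$ drops out and one obtains $\Lambda_n'(v)\phi=t_n(v)\,R_n'(t_n(v)v)\phi$ for every $\phi\in X$. Consequently, writing $w:=t_n(u_0)u_0\ne0$ and using $t_n(u_0)>0$, we get $R_n'(w)=0$. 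Expanding $R_n'(w)\phi=0$ with the quotient rule for $R_n$ (see \eqref{Rn}) and multiplying by $\|w\|_q^q>0$ gives
\[
\left(2\langle w,\phi\rangle-2p\Int(I_\alpha*|w|^p)|w|^{p-2}w\phi\,dx\right)\|w\|_q^q=\left(\|w\|^2-\Int(I_\alpha*|w|^p)|w|^pdx\right)q\Int|w|^{q-2}w\phi\,dx ;
\]
since $R_n(w)=Q_n(t_n(u_0))=\Lambda_n(u_0)=\lambda_n$, that is $\|w\|^2-\Int(I_\alpha*|w|^p)|w|^pdx=\lambda_n\|w\|_q^q$, substituting and cancelling $\|w\|_q^q$ leaves $2\langle w,\phi\rangle-2p\Int(I_\alpha*|w|^p)|w|^{p-2}w\phi\,dx-q\lambda_n\Int|w|^{q-2}w\phi\,dx=0$ for all $\phi\in X$, which is precisely the weak formulation of \eqref{eq-lambda-n}; relabelling $w$ as $u$ finishes the proof.

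I expect the main obstacle to be the compactness step in (ii): passing to the limit in the nonlocal term requires combining the Hardy--Littlewood--Sobolev inequality with the compact embeddings in exactly the right Lebesgue spaces, and this is where the range $p\in(2_\alpha,\2_\alpha)$ enters in an essential way. A secondary, easily overlooked point is that \eqref{eq-lambda-n} is not invariant under the scaling $u\mapsto tu$, so an arbitrary minimizer of $\Lambda_n$ need not solve it; the correct representative is the dilation $w=t_n(u_0)u_0$, and it is the identity $\Lambda_n'(v)=t_n(v)R_n'(t_n(v)v)$ that makes the Euler--Lagrange equation collapse exactly onto \eqref{eq-lambda-n}.
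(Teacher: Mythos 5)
Your proof is correct and follows essentially the same route as the paper: the same homogeneity computation for (i), a direct minimization argument for (ii) based on the Hardy--Littlewood--Sobolev bound, the compact embeddings and weak lower semicontinuity, and for (iii) the same observation that $\Lambda_n'(v)\phi=t_n(v)\,R_n'(t_n(v)v)\phi$ (the $t_n'$ term dropping out because $t_n(v)$ is the critical point of $\tau\mapsto R_n(\tau v)$), which reduces the Euler--Lagrange equation to $R_n'(t_n(u_0)u_0)=0$ and hence to the weak form of \eqref{eq-lambda-n}. The only cosmetic difference is that you normalize the minimizing sequence on the unit sphere of $X$, so boundedness is automatic, while the paper normalizes in $L^q(\mathbb{R}^N)$ and derives boundedness from the explicit formula; your remark that the actual solution of \eqref{eq-lambda-n} is the rescaled minimizer $t_n(u_0)u_0$ is exactly what the paper's proof produces as well.
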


\begin{proof}
The proof for item $i)$ follows immediately using the following identities 
$$\Lambda_n(tu)
=\dfrac{t^{\frac{2p-q}{p-1}-q-2p\frac{2-q}{2p-2}} C_{p,q}||u||^{\frac{2p-q}{p-1}}}{||u||^q_q\left[\Int (I_\alpha*|u|^p)|u|^pdx\right]^{\frac{2-q}{2p-2}}}=\Lambda_n(u), t > 0, u \in X\setminus\{0\}.$$

Now we shall prove the item $ii)$. In order to do that we shall show that $\Lambda_n$ is bounded from below. Indeed, by using the fact that $p< (N+\alpha)/(N-2)$ and Lemma \ref{lemaHardy}, there exists a positive constant $C_1$ such that  
\begin{equation}\label{P}
\Int (I_\alpha*|u|^p)|u|^pdx\leq C_1||u||^{2p}. 
\end{equation}
Now, using the fact that $X\hookrightarrow L^q(\Rn)$, for each $u \in X$ with $||u||=1$ it follows from item $i)$ and \eqref{P} that  
\begin{equation*}\label{liminferior}
	\Lambda_n(u)\geq C > 0
\end{equation*}
holds for some $C > 0$.
This fact implies that $\lambda_n > 0$  as was mentioned before. Now consider a minimizer sequence $(u_k) \in X\setminus\{0\}$, i.e, $\Lambda_n(u_k) \rightarrow \lambda_n$ as $n \rightarrow \infty$. Without any loss of generality, using the fact that $\Lambda_n$ is zero homogeneous, we assume that $(u_k)$ is normalized in $L^q(\mathbb{R}^N)$, i.e., we have $\|u_k\|_q =1$ for each $k \in \mathbb{N}$. Now we claim that that $(u_k)$ is bounded in $X$. In fact, by using the fact that $(u_k)$ is normalized in $L^q(\mathbb{R}^N)$ and \eqref{P}, we obtain
\begin{eqnarray}
\|u_k\|^{(2p - q)/(p -1)} &\leq& C \left( \Int (I_\alpha*|u_k|^p)|u_k|^pdx\right)^{(2 - q)/2(p -1)} 
\leq C \|u_k\|^{p(2- q)/(p -1)}.
\end{eqnarray}
The last estimates imply that $\|u_k\|^{q} \leq C$ for some $C > 0$. As a consequence, there exists $u \in X$ such that $u_k \rightharpoonup u$ in $X$. 
Furthermore, by using the compact embedding $X\hookrightarrow L^r(\mathbb{R}^N)$ for each $r\in [1,2^*)$, we obtain also that $\|u\|_q = 1$. The last assertion implies that $u \neq 0$. Now, using the fact that the norm $\|\cdot \|$ is weakly lower semicontinuous and the compact embedding $X\hookrightarrow L^r(\mathbb{R}^N)$ for each $r \in [1, 2^*)$, we observe that $\Lambda_n$ is also weakly lower semicontinuous. Hence, $\Lambda_n(u) \leq \liminf_{n \rightarrow \infty} \Lambda_n(u) = \lambda_n$. This ends the proof of item $ii)$.

Now we shall prove the item $iii)$. Since $\Lambda_n$ is attained we mention that	
	$$\lambda_n:=\inf_{v\in X\setminus\{0\}}\Lambda_n(v)=\Lambda_n(u):=R_n(t_n(u)u)$$
	holds for some $u \in X \setminus \{0\}$.
	Since $t_n(u)$ is the maximum point of $Q_n(t):=R_n(tu)$ we observe that 
	\begin{equation}\label{max-n}
	0=Q_n'(t_n(u))=(R_n)'(t_n(u)u)u.
	\end{equation}
	
	On the other hand, by using the fact that $u$ is a critical point of $\Lambda_n$, we infer that
	\begin{eqnarray}\label{Lambda_n}
	0&=&(\Lambda_n)'(u)w=(R_n(t_n(u)u))'w\nonumber\\
	&=&(R_n)'(t_n(u)u) \left[(t_n)'(u)w\right] u  +(R_n)'(t_n(u)u)t_n(u)w,~w\in X.
	\end{eqnarray}
	It follows from \eqref{max-n} and \eqref{Lambda_n} that $(R_n)'(t_n(u)u)w=0$ holds for all $w\in X$. Now, we define the auxiliary function $v := t_n(u)u \in X \setminus \{0\}$. Therefore, by using the fact that  
	$$\lambda_n = R_n(v) = \frac{\|v\|^2-\Int\left(I_\alpha*|v|^p\right)|v|^{p}dx}{\|v\|_q^q}$$
	we obtain the following identity  
	$$0=R'_n(v)w=\frac{1}{\|v\|_q^q}\left[2\langle v,w\rangle -2p\Int\left(I_\alpha*|v|^p\right)|v|^{p-2}v w dx-q \lambda_n \Int|v|^{q-2}v w dx\right] \,\, \mbox{for all} \,\, w \in X.$$
	The last assertion says that $v$ is a weak solution for the problem \eqref{eq-lambda-n}. This finished the proof.
\end{proof}

\begin{rmk}\label{impor}
	It is important to mention that using the function $\Lambda_e(u)=Q_e(t_e(u))$ instead of $\Lambda_n (u)=Q_n(t_n(u))$ we can prove that $\Lambda_e$ is also achieved by a function $u \in X$. Notice also that $\Lambda_e$ and $\Lambda_n$ are attained by the same function $u \in X$ which follows form the fact that $\Lambda_e(v) = C \Lambda_n(v)$ for each $v \in X$ where $C \in (0, 1)$. Furthermore, we observe that $0 < \lambda_e < \lambda_n <  \infty $. Clearly, the functional $\Lambda_e$ is also a zero homogeneous function. 
\end{rmk}

Now, by using Lemma \ref{Lambda} and Remark \ref{impor}, we can show that the fibering map $\phi(t)=E_\lambda(tu)$ has exactly two distinct critical points for each $\lambda \in (0, \lambda_n)$, see Figure 4. More specifically, we prove the following useful result:  
\begin{prop} \label{compar}
	Suppose $(Q)$ and $(V_1)-(V_2)$. Then for each $\lambda \in(0,\lambda_n)$ and $u\in X\setminus \{0\}$ 
	the fibering $\phi(t)=E_\lambda(tu)$ has exactly two distinct critical points $0<t_\lambda^{n,+}(u)<t_n(u)<t_\lambda^{n,-}(u)$. Moreover, we consider the following statements:
	\begin{itemize}
		\item[i)] The functional $t_\lambda^{n,+}(u)$ is a local minimum point for the fibering map $\phi$ which satisfies $t_\lambda^{n,+}(u)u\in\mathcal{N}_\lambda^+$. Furthermore, the functional $t_\lambda^{n,-}(u)$ is a local maximum for the fibering map $\phi$ which verifies $t_\lambda^{n,-}(u)u\in\mathcal{N}_\lambda^-$.
		\item[ii)] The functions $u \mapsto t_\lambda^{n,+}(u)$ and $u \mapsto t_\lambda^{n,-}(u)$ belong to $C^1(X \setminus \{0\}, \mathbb{R})$.
	\end{itemize}
	\end{prop}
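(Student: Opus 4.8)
The plan is to reduce everything to the one‑variable function $Q_n(t)=R_n(tu)$ whose shape was already established above. First I would write the fibering map explicitly as $\phi(t)=\tfrac{t^2}{2}\|u\|^2-\tfrac{t^{2p}}{2p}\int(I_\alpha*|u|^p)|u|^p\,dx-\tfrac{\lambda t^q}{q}\int|u|^q\,dx$, so that $\phi'(t)=t^{q-1}\big(t^{2-q}\|u\|^2-t^{2p-q}\int(I_\alpha*|u|^p)|u|^p\,dx-\lambda\int|u|^q\,dx\big)$; hence for $t>0$ one has $\phi'(t)=0$ exactly when $Q_n(t)=\lambda$, i.e. when $tu\in\mathcal N_\lambda$ (definition of the Nehari set together with Remark \ref{rmk1}). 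By the analysis preceding Lemma \ref{Lambda}: $Q_n(0)=0$, $Q_n(t)>0$ for small $t$ (since $\lim_{t\to0}Q_n(t)/t^{2-q}>0$ and $2-q>0$), $Q_n$ is strictly increasing on $(0,t_n(u))$, strictly decreasing on $(t_n(u),\infty)$ with $Q_n(t)\to-\infty$, and $\max_{t>0}Q_n(t)=Q_n(t_n(u))=\Lambda_n(u)\ge\lambda_n$. Since $\lambda<\lambda_n\le\Lambda_n(u)$ strictly, the horizontal line $y=\lambda$ lies strictly below the maximum, so by the intermediate value theorem and the strict monotonicity on each side of $t_n(u)$ the equation $Q_n(t)=\lambda$ has exactly two roots $t^{n,+}_\lambda(u)\in(0,t_n(u))$ and $t^{n,-}_\lambda(u)\in(t_n(u),\infty)$; these are precisely the positive critical points of $\phi$.

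Next I would identify their nature. A direct computation gives the identity $\phi''(t)=t^{-2}E''_\lambda(tu)(tu,tu)$ (compare \eqref{segunda}). Put $w_\pm:=t^{n,\pm}_\lambda(u)\,u$. At each $w_\pm$ one has $R_n(w_\pm)=\lambda$, so Proposition \ref{der-Rn} tells us that $\tfrac{d}{dt}R_n(tu)=Q_n'(t)$ has the same sign as $E''_\lambda(tu)(tu,tu)$ there. Since $Q_n'>0$ on $(0,t_n(u))$ we get $E''_\lambda(w_+)(w_+,w_+)>0$, hence $w_+\in\mathcal N_\lambda^+$ and $\phi''(t^{n,+}_\lambda(u))>0$, so $t^{n,+}_\lambda(u)$ is a strict local minimum of $\phi$; since $Q_n'<0$ on $(t_n(u),\infty)$ we get symmetrically $E''_\lambda(w_-)(w_-,w_-)<0$, $w_-\in\mathcal N_\lambda^-$ and $\phi''(t^{n,-}_\lambda(u))<0$, a strict local maximum. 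This proves item i).

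For item ii) I would invoke the implicit function theorem for $H(t,u):=R_n(tu)-\lambda$, which is $C^1$ on $(0,\infty)\times(X\setminus\{0\})$ because $R_n\in C^1(X\setminus\{0\})$ and $(t,u)\mapsto tu$ is smooth and nonzero there. Fix $u_0\in X\setminus\{0\}$. At $(t^{n,+}_\lambda(u_0),u_0)$ we have $\partial_tH=Q_n'(t^{n,+}_\lambda(u_0))>0\neq0$, so the IFT yields a $C^1$ map $u\mapsto\tau(u)$ near $u_0$ with $\tau(u_0)=t^{n,+}_\lambda(u_0)$ and $H(\tau(u),u)=0$. Since $u\mapsto t_n(u)$ is continuous (explicit formula \eqref{tn}) and $\tau(u_0)<t_n(u_0)$, after shrinking the neighbourhood $\tau(u)<t_n(u)$, so $\tau(u)$ is the unique root of $H(\cdot,u)=0$ in $(0,t_n(u))$, i.e. $\tau(u)=t^{n,+}_\lambda(u)$; thus $u\mapsto t^{n,+}_\lambda(u)$ is $C^1$. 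The argument for $t^{n,-}_\lambda$ is identical, now with $\partial_tH=Q_n'(t^{n,-}_\lambda(u_0))<0$ and the uniqueness of the root in $(t_n(u),\infty)$.

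The main point to handle with care — the only real obstacle — is the clean separation of the two branches: everything hinges on the \emph{strict} inequality $\lambda<\lambda_n\le\Lambda_n(u)$, which is what guarantees $t^{n,+}_\lambda(u)<t_n(u)<t^{n,-}_\lambda(u)$ with $\phi''$ strictly nonzero at both points, so the local branches produced by the IFT never meet and the count ``exactly two'' is stable in $u$; once this is in place, the identification of the nature of the two critical points is just sign bookkeeping through Proposition \ref{der-Rn} and the already‑established shape of $Q_n$.
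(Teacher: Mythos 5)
Your proposal is correct and follows essentially the same route as the paper: both reduce the problem to the shape of $Q_n(t)=R_n(tu)$, use the strict inequality $\lambda<\lambda_n\le\Lambda_n(u)=Q_n(t_n(u))$ to get exactly two roots of $Q_n(t)=\lambda$ (which are the critical points of $\phi$ by Remark \ref{rmk1}), classify them via the sign of $Q_n'$ through Proposition \ref{der-Rn}, and obtain the $C^1$ dependence by the Implicit Function Theorem. The only cosmetic difference is that you apply the IFT to $H(t,u)=R_n(tu)-\lambda$ while the paper uses $L^{\pm}(t,u)=E'_\lambda(tu)tu$, which differ by a positive factor, and you spell out the identification of the implicit branch with $t_\lambda^{n,\pm}(u)$ slightly more carefully than the paper does.
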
 
\begin{proof} 
	Let $0<\lambda<\lambda_n$ and $u\in X\setminus \{0\}$ be fixed. In view of \eqref{lambdan} we mention that $R_n(t_n(u)u)=Q(t_n(u))\geq \lambda_n >\lambda$. Here we refer the reader to Figure \ref{figQnQe} where was used the fact that $Q_n(t_n(u))=\displaystyle\max_{t>0}Q_n(t)$.  As a consequence, we obtain that the identity $Q_n(t)=R_n(tu)=\lambda$ admits exactly two roots. Namely, we consider its roots in the following form $0<t_\lambda^{n,+}(u)<t_n(u)<t_\lambda^{n,-}(u)$. Clearly, the roots $t_\lambda^{n,+}(u)$ and $t_\lambda^{n,-}(u)$ are critical points for the fibering map $\phi(t)=E_\lambda(tu)$, see Remark \ref{rmk1}. Under these conditions, we observe also that 
	\begin{equation}\label{Q'}
	Q'_n(t_\lambda^{n,+}(u))>0 \mbox{ and } Q'_n(t_\lambda^{n,-}(u))<0.
	\end{equation} 
	
	On the other hand, by using \eqref{save} and considering $G(u)=||u||_q^q$, we mention that
	\begin{equation*}
		0<Q'_n(t_\lambda^{n,+}(u)) = \frac{1}{t_\lambda^{n,+}(u)} \dfrac{E''_{\lambda}(t_\lambda^{n,+}(u)u) (t_\lambda^{n,+}(u)u, t_\lambda^{n,+}(u)u)}{G'(t_\lambda^{n,+}(u)u)(t_\lambda^{n,+}(u)u)}, u \in X \setminus \{0\}.
	\end{equation*}
	Hence, we obtain that $E''_{\lambda}(t_\lambda^{n,+}(u)u) (t_\lambda^{n,+}(u)u, t_\lambda^{n,+}(u)u)>0$ holds, see Proposition \ref{der-Rn}.  Hence, by using \eqref{n+}, we deduce that $t_\lambda^{n,+}(u)u\in \mathcal{N}_\lambda^+$.
	In the same way, we conclude that $t_\lambda^{n,-}(u)u\in \mathcal{N}_\lambda^-$ holds true. These statements finish the proof of item $(i)$.
	\begin{figure}[!ht]\label{figQnQe}
		\begin{minipage}[h]{0.49\linewidth}
			\center{\includegraphics[scale=0.7]{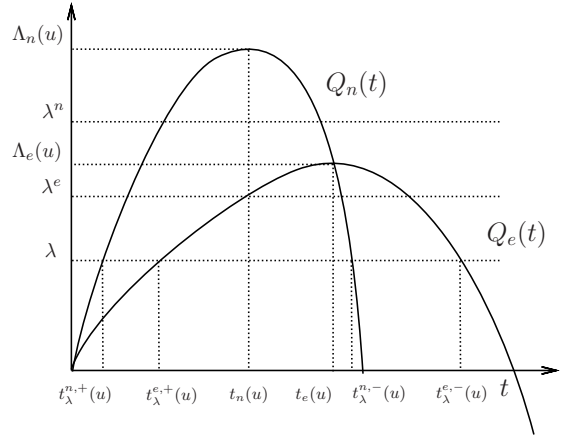}}
			\caption{The functions $Q_n(t)$, $Q_e(t)$}
		\end{minipage}
	\end{figure}

Now we shall prove the item $ii)$. Initially, we observe that $\lambda \in (0, \lambda_n)$ implies that $\lambda < Q_n(t_n(u)) = R_n(t_n(u) u)$ for all $u \in X \setminus \{0\}$. As was mentioned before we obtain exactly two roots for the equation $R_n(t u) = \lambda$. These roots satisfies $0 < t_\lambda^{n,+} < t_n(u) < t_n^{n,-}$ with $t_\lambda^{n,+} u \in \mathcal{N}_\lambda^+$ and $t_\lambda^{n,-} u \in \mathcal{N}_\lambda^-$. Hence, we obtain that $\mathcal{N}_\lambda=\mathcal{N}_\lambda^+\cup\mathcal{N}_\lambda^-$ is satisfied for each $\lambda \in (0, \lambda_n)$, see Proposition \ref{der-Rn}. Here we refer the interested reader to the important work \cite{yavdat1}. Under these conditions, by using the fact that $Q_n\in C^1(\mathbb{R^+},X)$ together with \eqref{Q'}, it follows from Implicit Function Theorem \cite{drabek} that functions $u \mapsto t_\lambda^{n,+}(u)$ and $u \mapsto t_\lambda^{n,-}(u)$ belong to $C^1(X \setminus \{0\}, \mathbb{R})$ for any $\lambda\in (0,\lambda_n)$. In fact, defining $L^{\pm} : (0, \infty) \times (X \setminus \{0\}) \rightarrow  \mathbb{R}$ given by $L^{\pm}(t, u) = E'_\lambda(tu) tu$, we obtain that $L^{\pm}(t, u) = 0$ if and only if $t u \in \mathcal{N}_\lambda$. Furthermore, we observe that $\frac{\partial}{\partial t} L^{\pm}(t, u) \neq 0$ for each $(t, u) \in (0, \infty) \times (X \setminus \{0\})$ such that $tu \in \mathcal{N}_\lambda^{\pm}$. This finishes the proof. 
\end{proof}

\begin{rmk}
Under assumptions of Proposition \ref{compar} it follows that $\mathcal{N}_{\lambda}^0$ is empty for each $\lambda \in (0, \lambda_n)$. This fact allows us to apply the Nehari method taking into account the uniqueness of the projections in $\mathcal{N}_\lambda^+$ and $\mathcal{N}_\lambda^-$. This is the main feature for the parameter $\lambda_n > 0$. In other words, the parameter $\lambda_n$ is the first positive number in such way that $\mathcal{N}_\lambda^0$ is not empty, see \cite{yavdat1}. 
\end{rmk}

Using the same ideas discussed in the proof of Proposition \ref{compar} we can ensure an analogous result for the functional $Q_e$ instead of $Q_n$. More specifically, we consider the following result
\begin{prop} \label{compar1}
	Suppose $(Q)$ and $(V_1)-(V_2)$. Then for each $\lambda \in(0,\lambda_e)$ and $u\in X\setminus \{0\}$ there are 
	two points $0<t_\lambda^{e,+}(u)<t_e(u)<t_\lambda^{e,-}(u)$ such that $t_\lambda^{e,-}(u)u, \ t_\lambda^{e,+}(u)u \in \mathcal{E}$ and $Q_e'(t_\lambda^{e,-}(u))<0<Q_e'(t_\lambda^{e,+}(u))$. Moreover,  we obtain that the functions $u \mapsto t_\lambda^{e,+}(u)$ and $u \mapsto t_\lambda^{e,-}(u)$ belong to $C^1(X \setminus \{0\}, \mathbb{R})$. Furthermore, we mention that  $0< t_\lambda^{n,+}(u) < t_\lambda^{e,+}(u)< t_n(u) < t_e(u) < t_\lambda^{n,-}(u) < t_\lambda^{e,-}(u) < \infty$ holds for each $\lambda \in (0, \lambda_e)$.
\end{prop}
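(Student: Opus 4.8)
The plan is to transpose the proof of Proposition~\ref{compar} essentially verbatim, replacing the Rayleigh quotient $R_n$ and its fibering map $Q_n$ by $R_e$ and $Q_e$, and exploiting the qualitative description of $Q_e$ recorded in the paragraph preceding Remark~\ref{Qe=Qn}: $Q_e(0)=0$, $Q_e$ has the unique critical point $t_e(u)$, which is its strict global maximum, $Q_e$ is strictly increasing on $(0,t_e(u))$ and strictly decreasing on $(t_e(u),\infty)$, and $Q_e(t)\to-\infty$ as $t\to\infty$. Fix $\lambda\in(0,\lambda_e)$ and $u\in X\setminus\{0\}$. By the definition \eqref{lambdae} of $\lambda_e$ one has $Q_e(t_e(u))=R_e(t_e(u)u)=\Lambda_e(u)\ge\lambda_e>\lambda$, so the equation $Q_e(t)=\lambda$ has exactly two positive solutions, which we call $t_\lambda^{e,+}(u)<t_e(u)<t_\lambda^{e,-}(u)$. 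Since $t_\lambda^{e,+}(u)$ lies in the increasing part of $Q_e$ and $t_\lambda^{e,-}(u)$ in its decreasing part, $Q_e'(t_\lambda^{e,-}(u))<0<Q_e'(t_\lambda^{e,+}(u))$; and by Remark~\ref{rmk11}(i) the identity $R_e(tu)=\lambda$ is equivalent to $E_\lambda(tu)=0$, so $t_\lambda^{e,\pm}(u)u\in\mathcal{E}$.

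For the $C^1$ dependence on $u$ I would mimic the second half of the proof of Proposition~\ref{compar}, applying the Implicit Function Theorem \cite{drabek} to $F:(0,\infty)\times(X\setminus\{0\})\to\mathbb{R}$, $F(t,u):=E_\lambda(tu)$ (equivalently to $(t,u)\mapsto R_e(tu)-\lambda$, which is $C^1$ because $R_e\in C^1(X\setminus\{0\},\mathbb{R})$ and $tu\ne0$ for $t>0$). At each point $(t_\lambda^{e,\pm}(u),u)$ we have $\partial_tF(t,u)=E_\lambda'(tu)u$; since $R_e(t_\lambda^{e,\pm}(u)u)=\lambda$ and $\frac{d}{dt}R_e(tu)\big|_{t=t_\lambda^{e,\pm}(u)}=Q_e'(t_\lambda^{e,\pm}(u))\ne0$, Proposition~\ref{importante} gives $E_\lambda'(t_\lambda^{e,\pm}(u)u)(t_\lambda^{e,\pm}(u)u)\ne0$, hence $\partial_tF(t_\lambda^{e,\pm}(u),u)\ne0$. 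The Implicit Function Theorem then yields a local $C^1$ branch solving $F(t,u)=0$ near each fixed $u$, and uniqueness of the two positive roots upgrades these to globally defined $C^1$ maps $u\mapsto t_\lambda^{e,\pm}(u)$ on $X\setminus\{0\}$.

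Next I would establish the ordering $0<t_\lambda^{n,+}(u)<t_\lambda^{e,+}(u)<t_n(u)<t_e(u)<t_\lambda^{n,-}(u)<t_\lambda^{e,-}(u)<\infty$. The middle inequality $t_n(u)<t_e(u)$ is immediate from \eqref{tn} and \eqref{T}, since $t_e(u)=p^{1/(2p-2)}t_n(u)$ and $p>1$. For the outer comparisons the tool is Remark~\ref{Qe=Qn}: $R_n(tu)>R_e(tu)$ for $t\in(0,t_e(u))$, $R_n(t_e(u)u)=R_e(t_e(u)u)=\Lambda_e(u)$, and $R_n(tu)<R_e(tu)$ for $t>t_e(u)$. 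By Proposition~\ref{compar} we know $0<t_\lambda^{n,+}(u)<t_n(u)<t_\lambda^{n,-}(u)$, so $t_\lambda^{n,+}(u)\in(0,t_e(u))$ and $R_e(t_\lambda^{n,+}(u)u)<R_n(t_\lambda^{n,+}(u)u)=\lambda$; since $Q_e$ is increasing on $(0,t_e(u))$ and equals $\lambda$ at $t_\lambda^{e,+}(u)$, this forces $t_\lambda^{n,+}(u)<t_\lambda^{e,+}(u)$. Symmetrically, $R_n(t_e(u)u)=\Lambda_e(u)\ge\lambda_e>\lambda$ together with the monotonicity of $Q_n$ past $t_n(u)$ give $t_e(u)<t_\lambda^{n,-}(u)$, whence $R_e(t_\lambda^{n,-}(u)u)>R_n(t_\lambda^{n,-}(u)u)=\lambda$, and since $Q_e$ is decreasing past $t_e(u)$ and equals $\lambda$ at $t_\lambda^{e,-}(u)$, we get $t_\lambda^{n,-}(u)<t_\lambda^{e,-}(u)$.

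The hard part will be the remaining link $t_\lambda^{e,+}(u)<t_n(u)$, equivalently $Q_e(t_n(u))>\lambda$, i.e.\ $E_\lambda(t_n(u)u)>0$. Here I would plug the defining relation $t_n(u)^{2p-2}=\frac{2-q}{2p-q}\,\|u\|^2\big/\!\int_{\mathbb{R}^N}(I_\alpha*|u|^p)|u|^p\,dx$ into the formula for $Q_e$ to obtain the closed form $Q_e(t_n(u))=\frac{q(2p+2-q)}{4p}\,\Lambda_n(u)$, record in the same way $\Lambda_e(u)=Q_e(t_e(u))=\frac{q}{2}\,p^{(2-q)/(2p-2)}\,\Lambda_n(u)$, and then compare against $\lambda<\lambda_e\le\Lambda_e(u)$; what is left is a purely elementary inequality between these two $p,q$-constants, where the full force of $(Q)$ (namely $1<q<2<2p$) enters. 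Once this inequality is in hand, concatenating it with the estimates of the preceding paragraph yields the full chain and completes the proof.
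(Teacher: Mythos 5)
Most of your plan is consistent with what the paper does: the paper offers no separate proof of Proposition \ref{compar1} beyond the remark that it follows ``by the same ideas'' as Proposition \ref{compar}, and your treatment of the two roots of $Q_e(t)=\lambda$, the signs of $Q_e'$ at them, the membership $t_\lambda^{e,\pm}(u)u\in\mathcal{E}$ via Remark \ref{rmk11}, the Implicit Function Theorem step, and the comparisons $t_\lambda^{n,+}(u)<t_\lambda^{e,+}(u)$, $t_n(u)<t_e(u)$, $t_e(u)<t_\lambda^{n,-}(u)<t_\lambda^{e,-}(u)$ are all correct and exactly in the intended spirit.

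The step you defer, namely $t_\lambda^{e,+}(u)<t_n(u)$, is a genuine gap, and it cannot be closed the way you propose: the ``purely elementary inequality'' you would need is false. Your two closed forms are right, $Q_e(t_n(u))=\frac{q(2p+2-q)}{4p}\Lambda_n(u)$ and $\Lambda_e(u)=\frac{q}{2}\,p^{(2-q)/(2p-2)}\Lambda_n(u)$, but to deduce $Q_e(t_n(u))>\lambda$ from $\lambda<\lambda_e\le\Lambda_e(u)$ you would need $\frac{q(2p+2-q)}{4p}\ge\frac{q}{2}\,p^{(2-q)/(2p-2)}$, which is the same as $Q_e(t_n(u))\ge Q_e(t_e(u))$ --- impossible, since $t_e(u)$ is the unique strict maximum point of $Q_e$ and $t_n(u)=p^{-1/(2p-2)}t_e(u)<t_e(u)$ by \eqref{tn} and \eqref{T}. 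So the constant inequality holds strictly in the reverse direction for every admissible $p,q$, and comparing with $\lambda_e\le\Lambda_e(u)$ yields nothing. Moreover the obstruction is not merely in your method: take $u$ a minimizer of $\Lambda_n$ (it exists by Lemma \ref{Lambda} and also minimizes $\Lambda_e$ by Remark \ref{impor}); then $Q_e(t_n(u))=\frac{q(2p+2-q)}{4p}\lambda_n<\frac{q}{2}\,p^{(2-q)/(2p-2)}\lambda_n=\lambda_e$, so for every $\lambda$ in the nonempty interval $\left(\frac{q(2p+2-q)}{4p}\lambda_n,\;\lambda_e\right)$ one has $Q_e(t_n(u))<\lambda$, hence $t_\lambda^{e,+}(u)>t_n(u)$. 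Thus the link $t_\lambda^{e,+}(u)<t_n(u)$ in the asserted chain holds only under the additional restriction $\lambda<\frac{q(2p+2-q)}{4p}\Lambda_n(u)$ (in particular for $\lambda$ small, which is what the paper's figures depict), and no argument can establish it for every $\lambda\in(0,\lambda_e)$ and every $u\in X\setminus\{0\}$. What your estimates do give unconditionally is $0<t_\lambda^{n,+}(u)<t_\lambda^{e,+}(u)<t_e(u)<t_\lambda^{n,-}(u)<t_\lambda^{e,-}(u)$; to keep the proposition in its stated generality you must either drop the comparison of $t_\lambda^{e,+}(u)$ with $t_n(u)$ or impose the corresponding smallness condition on $\lambda$, and you should say so explicitly rather than defer to an inequality that fails.
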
 

\section{Proof of Theorem \ref{theorm1}}
In this section we shall prove our first main result. In order to that we need to consider some auxiliary tools. Firstly, we shall consider the following useful result: 

\begin{lem}\label{coercive} Suppose $(Q)$ and $(V_1)-(V_2)$. Then the energy functional $E_\lambda$ is coercive in $\mathcal{N_\lambda}$ for each $\lambda>0$. In particular, the functional $E_\lambda$ is bounded from below in $\mathcal{N}_\lambda$.
\end{lem}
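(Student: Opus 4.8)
The strategy is to show that on the Nehari manifold $\mathcal{N}_\lambda$ the energy functional $E_\lambda$ can be bounded below by a function of $\|u\|$ alone that tends to $+\infty$ as $\|u\| \to \infty$. The starting point is to eliminate the (troublesome, non-sign-definite) Choquard term using the Nehari constraint. Indeed, for $u \in \mathcal{N}_\lambda$ we have, by definition of $\mathcal{N}_\lambda$,
\[
\Int \left(I_\alpha*|u|^p\right)|u|^p dx = \|u\|^2 - \lambda \Int |u|^q dx.
\]
Substituting this into \eqref{functional} gives
\[
E_\lambda(u) = \left(\dfrac{1}{2} - \dfrac{1}{2p}\right)\|u\|^2 - \lambda\left(\dfrac{1}{q} - \dfrac{1}{2p}\right)\Int |u|^q dx, \qquad u \in \mathcal{N}_\lambda.
\]
Since $p > 1$ we have $\frac12 - \frac{1}{2p} > 0$, and since $1 < q < 2 < 2p$ the constant $\frac{1}{q} - \frac{1}{2p}$ is a fixed positive number. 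So $E_\lambda$ restricted to $\mathcal{N}_\lambda$ is a combination of a positive multiple of $\|u\|^2$ minus a positive multiple of $\|u\|_q^q$.

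The next step is to control the concave term. By the continuous embedding $X \hookrightarrow L^q(\mathbb{R}^N)$ (valid here since $1 < q < 2 \le 2^*$, which holds for $N \ge 3$; this embedding is recorded in the preliminaries) there is a constant $C_q > 0$ with $\Int |u|^q dx \le C_q \|u\|^q$. Therefore, for every $u \in \mathcal{N}_\lambda$,
\[
E_\lambda(u) \ge \left(\dfrac{1}{2} - \dfrac{1}{2p}\right)\|u\|^2 - \lambda\left(\dfrac{1}{q} - \dfrac{1}{2p}\right) C_q \|u\|^q.
\]
Define $h(s) = \left(\frac{1}{2} - \frac{1}{2p}\right) s^2 - \lambda\left(\frac{1}{q} - \frac{1}{2p}\right) C_q\, s^q$ for $s \ge 0$. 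Since $q < 2$, the quadratic term dominates as $s \to \infty$, so $h(s) \to +\infty$; moreover $h$ is continuous on $[0,\infty)$ and hence bounded below on $[0,\infty)$ by some constant $m_\lambda \in \mathbb{R}$ (it attains a finite minimum). Consequently $E_\lambda(u) \ge h(\|u\|) \to +\infty$ as $\|u\| \to \infty$ with $u \in \mathcal{N}_\lambda$, which is exactly coercivity of $E_\lambda$ on $\mathcal{N}_\lambda$, and $E_\lambda(u) \ge m_\lambda$ for all $u \in \mathcal{N}_\lambda$, giving the boundedness from below.

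There is no serious obstacle here: the only point requiring a little care is making sure the Choquard term is genuinely removed via the Nehari identity before any estimation, so that one never has to bound $\Int (I_\alpha * |u|^p)|u|^p dx$ from above by a power of $\|u\|$ exceeding $2$ (which would destroy coercivity). Once the substitution is made the argument is the standard concave-convex coercivity computation, and the hypotheses $1 < q < 2$ and $p > 1$ are precisely what make it work. I would also remark that the lower bound $m_\lambda$ obtained this way is uniform for $\lambda$ in any bounded subinterval of $(0,\infty)$, which may be convenient later, though it is not needed for the statement as written.
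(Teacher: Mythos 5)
Your proof is correct and follows essentially the same route as the paper: eliminate the Choquard term via the Nehari constraint, obtain $E_\lambda(u)=\bigl(\tfrac12-\tfrac{1}{2p}\bigr)\|u\|^2-\lambda\bigl(\tfrac1q-\tfrac{1}{2p}\bigr)\|u\|_q^q$ on $\mathcal{N}_\lambda$, and use the embedding $X\hookrightarrow L^q(\mathbb{R}^N)$ together with $1<q<2$ to conclude coercivity and boundedness from below. Your explicit auxiliary function $h(s)$ and the attained finite minimum is a slightly more careful way of phrasing the paper's final step, but it is the same argument.
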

\begin{proof}
Notice that, for each $u \in\mathcal{N_\lambda},$ we obtain $$\Int (I_\alpha*|u|^p)|u|^pdx=||u||^2-\lambda\Int |u|^qdx.$$ Hence, using the embedding  $X\hookrightarrow L^r(\mathbb{R}^N)$ for each $r\in [1,2^*)$, we deduce that
\begin{eqnarray}
E_\lambda(u)&=& \frac{1}{2}\left(1-\frac{1}{p}\right)||u||^2-\lambda\left(\frac{1}{q}-\frac{1}{2p}\right)\Int |u|^qdx \nonumber \\
&\geq& C_1||u||^2-\lambda C_2||u||^q=||u||^2\left(C_1-\lambda C_2||u||^{q-2}\right)
	\end{eqnarray}
holds for some positive constants $C_1, C_2$. Now, by using the fact that $1<q<2$, we deduce also that $E_\lambda(u)\to +\infty$ as $||u||\to +\infty$ with $u \in \mathcal{N}_\lambda$. This completes the proof. 
\end{proof}

\begin{lem}\label{E-}
	Suppose $(Q)$ and $(V_1)-(V_2)$. Assume also that $\lambda\in(0,\lambda_n]$ holds.  Then, for each $u\in\mathcal{N_\lambda}^-\cup \mathcal{N}_\lambda^0$ there exists a constant $c = c(N,p,q)>0$ which does not depend on $\lambda$ in such way that $||u||\geq c$. In particular, the set $\mathcal{N_\lambda}^-\cup \mathcal{N}_\lambda^0$ is closed. 
\end{lem}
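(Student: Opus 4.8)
The plan is to couple the Nehari constraint with the sign condition $E''_\lambda(u)(u,u)\le 0$ in order to eliminate the concave term altogether, and then feed the resulting inequality into the Hardy--Littlewood--Sobolev estimate \eqref{P}. First I would fix $u\in\mathcal{N}_\lambda^-\cup\mathcal{N}_\lambda^0$, so that by \eqref{segunda}
$$\|u\|^2-(2p-1)\Int(I_\alpha*|u|^p)|u|^pdx-\lambda(q-1)\Int|u|^qdx\le 0,$$
while the membership in $\mathcal{N}_\lambda$ gives $\lambda\Int|u|^qdx=\|u\|^2-\Int(I_\alpha*|u|^p)|u|^pdx$. Substituting this identity into the previous inequality, the parameter $\lambda$ disappears together with the whole concave contribution, and a short computation leaves
$$(2-q)\|u\|^2\le(2p-q)\Int(I_\alpha*|u|^p)|u|^pdx.$$

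Next I would invoke \eqref{P}, that is $\Int(I_\alpha*|u|^p)|u|^pdx\le C_1\|u\|^{2p}$ with $C_1$ coming from Lemma \ref{lemaHardy} and depending only on $N,\alpha,p$. Since $u\ne 0$ we may divide by $\|u\|^2>0$; using $2-q>0$, $2p-q>0$ and $2p-2>0$ (all consequences of $(Q)$), this yields $\|u\|^{2p-2}\ge (2-q)/\big((2p-q)C_1\big)$, hence $\|u\|\ge c$ with
$$c:=\left(\frac{2-q}{(2p-q)C_1}\right)^{\frac{1}{2p-2}}>0,$$
a constant that depends only on $N,p,q$ (and the fixed $\alpha$) and, crucially, not on $\lambda$.

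Finally, for the closedness assertion I would take $(u_k)\subset\mathcal{N}_\lambda^-\cup\mathcal{N}_\lambda^0$ with $u_k\to u$ in $X$. The uniform lower bound $\|u_k\|\ge c$ passes to the limit, so $\|u\|\ge c>0$ and in particular $u\ne 0$; and since $u\mapsto\|u\|^2$, $u\mapsto\Int(I_\alpha*|u|^p)|u|^pdx$ (continuous on $X$ by Lemma \ref{lemaHardy} together with the embedding $X\hookrightarrow L^r(\mathbb{R}^N)$) and $u\mapsto\Int|u|^qdx$ are all continuous, letting $k\to\infty$ in the Nehari identity and in $E''_\lambda(u_k)(u_k,u_k)\le 0$ gives $u\in\mathcal{N}_\lambda$ and $E''_\lambda(u)(u,u)\le 0$, i.e. $u\in\mathcal{N}_\lambda^-\cup\mathcal{N}_\lambda^0$. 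The only point that needs care — and the reason the order of the argument matters — is that $\mathcal{N}_\lambda$ is not closed by itself ($0$ belongs to its closure): it is precisely the $\lambda$-independent bound $\|u\|\ge c$, established first, that rules this out.
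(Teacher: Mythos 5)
Your proof is correct, and it reaches the same key inequality as the paper but by a more direct route. The paper argues through the fibering map: for $u\in\mathcal{N}_\lambda^-\cup\mathcal{N}_\lambda^0$ it shows (splitting into the cases $\lambda<\lambda_n$, $\lambda=\lambda_n$ with $u\in\mathcal{N}_{\lambda_n}^-$, and $\lambda=\lambda_n$ with $u\in\mathcal{N}_{\lambda_n}^0$, using Proposition \ref{der-Rn} and the monotonicity of $Q_n$) that $t_n(u)\leq 1$, and then plugs the explicit formula \eqref{tn} together with \eqref{P} into this inequality. Your substitution of the Nehari identity into $E''_\lambda(u)(u,u)\leq 0$ produces exactly the same relation $(2-q)\|u\|^2\leq(2p-q)\Int(I_\alpha*|u|^p)|u|^p\,dx$ (which is precisely the statement $t_n(u)\leq 1$), but it does so in one algebraic step, eliminates $\lambda$ explicitly, and treats $\mathcal{N}_\lambda^-$, $\mathcal{N}_\lambda^0$ and all $\lambda\in(0,\lambda_n]$ uniformly, so no case distinction is needed; from there the HLS bound \eqref{P} gives the same $\lambda$-independent constant $c$. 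Your closedness argument (uniform lower bound rules out the zero limit, then continuity of the norm, the Choquard term and the $L^q$-norm under strong convergence in $X$ preserves the defining identity and the closed inequality $E''_\lambda(u)(u,u)\leq 0$) is exactly what the paper leaves as a one-line remark, and it is sound. The only cosmetic point is the provenance of the constant: $C_1$ in \eqref{P} also involves $\alpha$ and the embedding constant of $X\hookrightarrow L^{2Np/(N+\alpha)}(\mathbb{R}^N)$, but you note this, and the paper's own labelling $c=c(N,p,q)$ has the same slight imprecision; what matters for the sequel is only the independence of $\lambda$, which both arguments deliver.
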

\begin{proof}
	Let $u\in\mathcal{N_\lambda}^-$ be a fixed function with $\lambda \in (0, \lambda_n)$. For this case we know that $t_\lambda^{n,-}(u) = 1$. It follows from \eqref{tn} and \eqref{P} that 
	\begin{equation}\label{eq11}
		1=t_\lambda^{n,-}(u)\geq t_n(u)=\left[\dfrac{(2-q)}{(2p-q)}\dfrac{||u||^2}{\Int\left(I_\alpha*|u|^p\right)|u|^pdx}\right]^{\frac{1}{2p-2}}\geq \left[C||u||^{2-2p}\right]^{\frac{1}{2p-2}}.
		\end{equation}
	These inequalities implies that $\|u\|\geq c_1$ holds true for some $c_1 = c_1(N,p,q) > 0$ with $\lambda \in (0, \lambda_n)$. Notice that $\mathcal{N}^0_\lambda = \emptyset$ for each $\lambda \in (0, \lambda_n)$. For the case $\lambda = \lambda_n$ we also observe that $\mathcal{N}_{\lambda_n}^0 \neq \emptyset$. Now we assume that $u \in \mathcal{N}_{\lambda_n}^-$ which occurs whenever $\lambda_n < \Lambda_n(u)$. The last statement implies that $Q'_n(1) < 0$, see Proposition \ref{der-Rn}. As a consequence we obtain that $t_n(u) < 1$. Hence using the same ideas discussed in \eqref{eq11} we infer that $\|u\| \geq c_2$ for each $u \in \mathcal{N}_{\lambda_n}^-$ holds for some $c_2 = c_2(N,p,q)$. Analogously, for each $u \in \mathcal{N}_{\lambda_n}^0$ we obtain that $R_n'(u) u = 0$, see Proposition \ref{der-Rn}. Recall that $u \in \mathcal{N}_{\lambda_n}^0$ whenever $\lambda_n = \Lambda_n(u)$.  Since $t_n(u)$ is unique maximum point for the function $Q_n$ it follows that $t_{n}(u) = 1$. As a consequence, using the same ideas employed in \eqref{eq11}, we obtain that $\|u\|\geq c_3$ holds true for some $c_3 = c_3(N,p,q) > 0$ where $u \in \mathcal{N}_{\lambda_n}^0$. Therefore, we obtain that $\|u\| \geq c$ for any $u \in \mathcal{N_\lambda}^-\cup \mathcal{N}_\lambda^0$ where $c = \min(c_1, c_2, c_3)> 0$. Using the strong convergence for sequences in $X$ together with the last estimate we obtain that the set $\mathcal{N_\lambda}^-\cup \mathcal{N}_\lambda^0$ is closed.  This ends the proof. 
\end{proof}

For the next result we shall prove that $\mathcal{N}_\lambda$ is a natural constraint for our main problem for each $\lambda \in (0, \lambda_n)$. More precisely, we show the following result
\begin{lem}\label{criticalpoint}
Suppose $(Q)$ and $(V_1)-(V_2)$. Let $u \in X$ be a local minimum (or local maximum) for $E_\lambda$ on $ \mathcal{N_\lambda}$, that is, assume that $u \in \mathcal{N}_\lambda^- \cup \mathcal{N}_\lambda^+$ is a minimizer for $E_{\lambda}$. Then $u$ is a free critical point of $E_\lambda$ on $X$, that is, we obtain that $E_{\lambda}'(u) \psi = 0$ for each $\psi \in X$ with $\lambda\in(0,\lambda_n]$.
\end{lem}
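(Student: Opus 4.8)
The claim is the standard "Nehari manifold is a natural constraint" statement, and the plan is to use Lagrange multipliers together with the fact that $\mathcal{N}_\lambda^0$ is empty for $\lambda \in (0,\lambda_n)$ (and a separate argument at $\lambda = \lambda_n$). Define the constraint functional $J_\lambda : X \setminus \{0\} \to \mathbb{R}$ by $J_\lambda(u) = E_\lambda'(u)u = \|u\|^2 - \int_{\mathbb{R}^N}(I_\alpha * |u|^p)|u|^p\,dx - \lambda \int_{\mathbb{R}^N}|u|^q\,dx$, so that $\mathcal{N}_\lambda = \{u \in X\setminus\{0\} : J_\lambda(u) = 0\}$. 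Since $E_\lambda, J_\lambda \in C^1(X,\mathbb{R})$, if $u$ is a local minimizer (or maximizer) of $E_\lambda$ restricted to $\mathcal{N}_\lambda$, the Lagrange multiplier rule gives $\theta \in \mathbb{R}$ with $E_\lambda'(u) = \theta J_\lambda'(u)$ in $X'$.

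**Key steps.** First I would compute $J_\lambda'(u)u$ and observe that it equals exactly $E_\lambda''(u)(u,u)$ as given by \eqref{segunda}; indeed $J_\lambda'(u)u = 2\|u\|^2 - 2p\int_{\mathbb{R}^N}(I_\alpha*|u|^p)|u|^p\,dx - \lambda q\int_{\mathbb{R}^N}|u|^q\,dx$, which after using the Nehari identity $J_\lambda(u)=0$ to eliminate one term reduces to $E_\lambda''(u)(u,u)$. Second, apply the multiplier identity to the test function $\psi = u$: since $u \in \mathcal{N}_\lambda$ we have $E_\lambda'(u)u = 0$, hence $0 = \theta\, J_\lambda'(u)u = \theta\, E_\lambda''(u)(u,u)$. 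Third, invoke Proposition \ref{compar}: for $\lambda \in (0,\lambda_n)$ the set $\mathcal{N}_\lambda^0$ is empty, so $u \in \mathcal{N}_\lambda^+ \cup \mathcal{N}_\lambda^-$ forces $E_\lambda''(u)(u,u) \neq 0$, whence $\theta = 0$ and therefore $E_\lambda'(u)\psi = 0$ for all $\psi \in X$. Finally, for the boundary case $\lambda = \lambda_n$ one must argue separately that the minimizer $u$ cannot lie in $\mathcal{N}_{\lambda_n}^0$: this is exactly where the regularity and decay results (Theorem \ref{regular}, Proposition \ref{salva}) are used, so $u \in \mathcal{N}_{\lambda_n}^+ \cup \mathcal{N}_{\lambda_n}^-$ again gives $E_\lambda''(u)(u,u)\neq 0$ and the conclusion follows as before.

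**Main obstacle.** The only subtle point for $\lambda \in (0,\lambda_n)$ is verifying that $\mathcal{N}_\lambda$ is genuinely a $C^1$ manifold near $u$ so that the Lagrange multiplier rule applies — but this is immediate since $J_\lambda'(u)u = E_\lambda''(u)(u,u) \neq 0$ shows $J_\lambda'(u) \neq 0$ on $\mathcal{N}_\lambda \setminus \mathcal{N}_\lambda^0 = \mathcal{N}_\lambda$. The genuinely delicate case is $\lambda = \lambda_n$, where $\mathcal{N}_{\lambda_n}^0 \neq \emptyset$; here one cannot conclude purely abstractly and must rule out $u \in \mathcal{N}_{\lambda_n}^0$ by the fine estimates on the Choquard term deferred to the Appendix. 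I expect the write-up to handle $\lambda \in (0,\lambda_n)$ in a few lines and to refer forward to the Appendix (or to a later proposition) for the case $\lambda = \lambda_n$.
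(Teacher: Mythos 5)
Your argument is correct and is essentially the paper's own proof: the paper likewise invokes the Lagrange multiplier rule and the fact that $E''_\lambda(u)(u,u)\neq 0$ for $u\in\mathcal{N}_\lambda^+\cup\mathcal{N}_\lambda^-$ (you merely spell out the details it leaves implicit, namely $J_\lambda'(u)u=E''_\lambda(u)(u,u)$ via the Nehari identity and testing the multiplier relation with $\psi=u$ to get $\theta=0$). The only small difference is at $\lambda=\lambda_n$: since the hypothesis already places $u$ in $\mathcal{N}_{\lambda_n}^+\cup\mathcal{N}_{\lambda_n}^-$, the same multiplier argument applies verbatim there, so the detour through Theorem \ref{regular} and Proposition \ref{salva} is not needed inside this lemma (those results are used later to show minimizers avoid $\mathcal{N}_{\lambda_n}^0$).
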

\begin{proof}
The proof follows using standard minimization arguments on the Nehari set $\mathcal{N}_\lambda$. Let $u \in X$ be a local minimum (or local maximum) for $E_\lambda$ on $ \mathcal{N_\lambda}$ where $\lambda\in(0,\lambda_n)$. Since $E''(u)(u,u) = \phi'_u(1) \neq 0$ it follows from Lagrange Multipliers Theorem that $E_\lambda'(u)\psi = 0$ for each $\psi \in X$. 
This ends the proof. 
\end{proof}

For the next result we shall prove that any minimizers sequences in $\mathcal{N}_\lambda^-$ converge strongly in $X$. As a consequence, any minimizer sequences allow us to find a critical point for the energy functional $E_\lambda$. Namely, we can prove the following result
\begin{lem}\label{converg}
Suppose $(Q)$ and $(V_1)-(V_2)$. Assume also that $\lambda \in (0, \lambda_n)$ holds. Let $(v_k)\subset\mathcal{N_\lambda}^- $ be a minimizer sequence. Then there exists $v_\lambda\in X\setminus\{0\}$ such that, up to a subsequence, $v_k\to v_\lambda$ in $X$ where $v_\lambda\in\mathcal{N_\lambda}^-$. Furthermore, we obtain that $\mathcal{E}_\lambda ^2=E_\lambda(v_\lambda)$.
\end{lem}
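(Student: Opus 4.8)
The plan is to run the direct method along the fibering maps, with the only real difficulty being that minimizing sequences in $\mathcal{N}_\lambda^-$ sit at a mountain‑pass type level, so strong convergence is not for free. First I would extract a candidate limit. By Lemma~\ref{coercive}, $E_\lambda$ is coercive on $\mathcal{N}_\lambda$, so $(v_k)$ is bounded in $X$; passing to a subsequence, $v_k\rightharpoonup v_\lambda$ in $X$ and, by the compact embeddings $X\hookrightarrow L^r(\mathbb{R}^N)$ for $r\in[1,2^*)$, also $v_k\to v_\lambda$ in $L^q(\mathbb{R}^N)$ and a.e. Since $(Q)$ gives $\tfrac{2Np}{N+\alpha}\in(2,2^*)$, we have $|v_k|^p\to|v_\lambda|^p$ in $L^{2N/(N+\alpha)}(\mathbb{R}^N)$, and Lemma~\ref{lemaHardy} then yields $\int(I_\alpha*|v_k|^p)|v_k|^p\to\int(I_\alpha*|v_\lambda|^p)|v_\lambda|^p=:B$; set $C:=\|v_\lambda\|_q^q$. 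From $v_k\in\mathcal{N}_\lambda$ we get $\|v_k\|^2=\int(I_\alpha*|v_k|^p)|v_k|^p+\lambda\|v_k\|_q^q\to B+\lambda C=:A$, while Lemma~\ref{E-} gives $\|v_k\|\ge c>0$; hence $A>0$, and since $v_\lambda=0$ would force $B=C=0$ and $\|v_k\|\to 0$, we conclude $v_\lambda\neq 0$.

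Now the core step: showing $v_k\to v_\lambda$ strongly in $X$. Writing $w_k:=v_k-v_\lambda\rightharpoonup 0$, the Hilbert identity gives $\|v_k\|^2=\|v_\lambda\|^2+\|w_k\|^2+o(1)$, so along a further subsequence $d^2:=\lim_k\|w_k\|^2$ exists and $\|v_\lambda\|^2=A-d^2$. Assume for contradiction $d>0$. Since $\lambda<\lambda_n\le\Lambda_n(v_\lambda)$, Proposition~\ref{compar} provides $t^-:=t_\lambda^{n,-}(v_\lambda)$ with $t^-v_\lambda\in\mathcal{N}_\lambda^-$, hence $E_\lambda(t^-v_\lambda)\ge\mathcal{E}_\lambda^2$. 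I would first check $t^-<1$: indeed $E_\lambda'(v_\lambda)v_\lambda=\|v_\lambda\|^2-B-\lambda C<0$, so $R_n(v_\lambda)<\lambda$ by Remark~\ref{rmk1}, hence $1\notin[t_\lambda^{n,+}(v_\lambda),t^-]$; the alternative $1<t_\lambda^{n,+}(v_\lambda)$ is excluded because it forces $(2-q)\|v_\lambda\|^2>(2p-q)B$, whereas passing to the limit in the inequality $(2-q)\|v_k\|^2<(2p-q)\int(I_\alpha*|v_k|^p)|v_k|^p$, valid since $v_k\in\mathcal{N}_\lambda^-$ (Proposition~\ref{der-Rn}), gives $(2-q)A\le(2p-q)B$, so $\|v_\lambda\|^2<A\le\tfrac{2p-q}{2-q}B$, a contradiction. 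Therefore $t^-<1$.

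Next I would compare the fibering maps of $v_k$ and of $v_\lambda$. The roots of $R_n(tv_k)=\lambda$, i.e.\ of $t^{2-q}\|v_k\|^2-t^{2p-q}\int(I_\alpha*|v_k|^p)|v_k|^p=\lambda\|v_k\|_q^q$, are simple and depend continuously on the coefficients, which converge to $(A,B,C)$, so $t_k^+:=t_\lambda^{n,+}(v_k)\to\tau^+$, the smaller root of $t^{2-q}A-t^{2p-q}B=\lambda C$; comparing the superlevel sets of $t\mapsto t^{2-q}\|v_\lambda\|^2-t^{2p-q}B$ and $t\mapsto t^{2-q}A-t^{2p-q}B$ gives $\tau^+\le t_\lambda^{n,+}(v_\lambda)<t^-$, hence $t^-\ge t_k^+$ for all large $k$. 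Because $v_k\in\mathcal{N}_\lambda^-$ means the fibering $t\mapsto E_\lambda(tv_k)$ attains its maximum over $[t_k^+,\infty)$ at $t=1$, we get $E_\lambda(v_k)\ge E_\lambda(t^-v_k)$; letting $k\to\infty$ (the nonlinear terms converge and $\|v_k\|^2\to A$) gives $E_\lambda(t^-v_k)\to E_\lambda(t^-v_\lambda)+\tfrac{(t^-)^2}{2}d^2$, hence $\mathcal{E}_\lambda^2=\lim_k E_\lambda(v_k)\ge E_\lambda(t^-v_\lambda)+\tfrac{(t^-)^2}{2}d^2\ge\mathcal{E}_\lambda^2+\tfrac{(t^-)^2}{2}d^2$, forcing $(t^-)^2d^2\le 0$, which is impossible since $t^->0$. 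Thus $d=0$ and $v_k\to v_\lambda$ in $X$.

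Finally, strong convergence passes everything to the limit: $E_\lambda'(v_k)v_k=0$ gives $E_\lambda'(v_\lambda)v_\lambda=0$, so $v_\lambda\in\mathcal{N}_\lambda$; $E_\lambda''(v_k)(v_k,v_k)<0$ gives $E_\lambda''(v_\lambda)(v_\lambda,v_\lambda)\le 0$, and since $\mathcal{N}_\lambda^0=\emptyset$ for $\lambda\in(0,\lambda_n)$ (Proposition~\ref{compar} and the remark following it) the inequality is strict, so $v_\lambda\in\mathcal{N}_\lambda^-$; moreover $v_\lambda\neq 0$ and $E_\lambda(v_\lambda)=\lim_k E_\lambda(v_k)=\mathcal{E}_\lambda^2$. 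The main obstacle is exactly the third paragraph: excluding the energy loss $d^2>0$ in the weak limit at the $\mathcal{N}_\lambda^-$ level, which weak lower semicontinuity alone does not give and which requires the two‑sided comparison between the fibering map of $v_k$ (maximal at $t=1$) and that of its weak limit, together with the control $t^-\ge t_k^+$ and $t^-<1$.
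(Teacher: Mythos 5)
Your argument is correct, and its skeleton is the paper's: extract a weak limit via coercivity and the compact embeddings, show $v_\lambda\neq 0$, then derive a contradiction from failure of strong convergence by projecting $v_\lambda$ onto $\mathcal{N}_\lambda^-$ at $t^-=t_\lambda^{n,-}(v_\lambda)$ and exploiting that each $v_k\in\mathcal{N}_\lambda^-$ maximizes its fibering map on $[t_\lambda^{n,+}(v_k),\infty)$ at $t=1$, so that $E_\lambda(v_k)\ge E_\lambda(t^-v_k)$ once $t^-\ge t_\lambda^{n,+}(v_k)$. Where you differ is in the implementation of two sub-steps, and both variants are sound. For nontriviality, the paper normalizes $w_k=v_k/\|v_k\|$ and reaches a contradiction in the Nehari identity, while you simply note that the constraint forces $\|v_k\|^2\to B+\lambda C$, which together with the uniform lower bound of Lemma~\ref{E-} rules out $v_\lambda=0$; your route is shorter. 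For the main contradiction, the paper uses weak lower semicontinuity twice (of $u\mapsto E'_\lambda(u)u$ to place $t^-$ in $(t_\lambda^{n,+}(v_k),t_\lambda^{n,-}(v_k))$, and of $E_\lambda$ to get the strict drop), whereas you quantify the defect through the Hilbert splitting $\|v_k\|^2=\|v_\lambda\|^2+\|w_k\|^2+o(1)$ and obtain $\mathcal{E}_\lambda^2\ge\mathcal{E}_\lambda^2+\tfrac{(t^-)^2}{2}d^2$, and you get $t_\lambda^{n,+}(v_k)<t^-$ from continuity of the simple roots of $t^{2-q}\|v_k\|^2-t^{2p-q}\int_{\mathbb{R}^N}(I_\alpha*|v_k|^p)|v_k|^p\,dx=\lambda\|v_k\|_q^q$ in their coefficients together with the superlevel-set comparison; this is a more explicit, quantitative version of the same mechanism. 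Two small remarks: your paragraph establishing $t^-<1$ is correct but never used in the final contradiction (only $t^-\ge t_\lambda^{n,+}(v_k)$ for large $k$ is needed), and in the limit equation one should note $B>0$ (which follows from $v_\lambda\neq0$) and that its maximum exceeds $\lambda$ because $A\ge\|v_\lambda\|^2$ and $\Lambda_n(v_\lambda)\ge\lambda_n>\lambda$, so the roots $\tau^{\pm}$ are indeed simple; you implicitly use both. The concluding passage to $v_\lambda\in\mathcal{N}_\lambda^-$ via $\mathcal{N}_\lambda^0=\emptyset$ for $\lambda\in(0,\lambda_n)$ matches the paper.
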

\begin{proof}
	Initially, we shall consider a minimizer sequence $(v_k)\subset\mathcal{N_\lambda}^-, $ i.e.,
	$E_\lambda(v_k)=\mathcal{E}_\lambda^2+o_k(1)$. 
	In fact, up to a subsequence, we assume $v_k \rightharpoonup v_\lambda$ in $X$. As a consequence using the compact embedding $X\hookrightarrow L^r(\mathbb{R}^N)$ for each $r\in [1,2^*)$, we infer that $v_k\to v_\lambda$ in $L^r(\mathbb{R}^N)$ and $v_k(x)\to v_\lambda(x) $ a.e. in $ \mathbb{R}^N$. Moreover, there exists $h_r \in L^r(\mathbb{R}^n)$ such that $|v_n|\leq h_r$ a.e in $\mathbb{R}^N$.  Under these conditions, we infer also that 
	 \begin{equation}\label{limites}
	\int (I_\alpha*|v_k|^p)|v_k|^pdx=\Int (I_\alpha*|v_\lambda|^p)|v_\lambda|^pdx+o_k(1) \mbox{ and } \Int |v_k|^qdx=\Int |v_\lambda|^qdx+o_k(1).
	\end{equation} 
At this stage we observe that $ v_\lambda \neq 0 $. Indeed, arguing by contradiction we assume that $v_\lambda \equiv 0$ and $v_k \rightharpoonup 0$ in $X$.  Now, we define the normalized sequence $w_k= v_k/||v_k||\in S^1$ where $S^1$ is the unit sphere of $X$. Thus,  we write $v_k=||v_k||w_k=t_\lambda^{n,-}(w_k)w_k$. Here was used the fact that $t_\lambda^{n,-}(w_k) = ||v_k||$ which can be proved using Proposition \ref{compar}. For simplicity we now write $t_k = t_\lambda^{n,-}(w_k)$. In view of Lemma \ref{E-} taking into account that $v_k\in\mathcal{N_\lambda}^-$ there exists a constant $c>0$ such that $t_k \geq c > 0$. Therefore, $t_k\to t_0>0$ as $k \to \infty.$ Hence, $ v_k \rightharpoonup 0 $  is equivalent to $w_k \rightharpoonup 0$ in $X$. Under these conditions, by using the fact that $v_k\in\mathcal{N_\lambda}^-$ and taking into account Remark \ref{rmk1}, we get
	\begin{equation*}
	\dfrac{\lambda}{t_0^{2-q}} + o_k(1) =\frac{\lambda}{t_k^{2-q}} =\dfrac{1-t_k^{2p-2}\Int(I_\alpha*|w_k|^p)|w_k|^p}{||w_k||_q^q}+o_k(1). 
	\end{equation*}
	As a consequence, for each $\epsilon>0, $ there exists $k_0\in\mathbb{N}$ such that
	\begin{equation}\label{ed44}
	1-t_k^{2p-2}\Int(I_\alpha*|w_k|^p)|w_k|^p<\left(\dfrac{\lambda}{t_0^{2-q}}+\epsilon\right)
	||w_k||_q^q
	\end{equation}
	 holds for each $k>k_0$. On the other hand, by using the fact that $w_k \rightharpoonup 0$, the compact embedding $X\hookrightarrow L^r(\mathbb{R}^N)$ for each $r\in [1,2^*)$ together with Lemma \ref{lemaHardy} imply that
	\begin{equation*}
	\int (I_\alpha*|w_k|^p)|w_k|^pdx = o_k(1), \Int |w_k|^qdx = o_k(1).
	\end{equation*} 
This is a contradiction with \eqref{ed44} proving that $v_\lambda\neq 0.$

Recall also that, by using Proposition \ref{compar}, we obtain that the fibering map $\phi(t)=E_\lambda(tu), t \geq 0$ admits an unique critical point $t^{n,-}_\lambda(v_\lambda)>0$ in such way that $t^{n,-}_\lambda(v_\lambda)v_\lambda\in \mathcal{N_\lambda}^-.$ Now, arguing by contradiction, we assume that $v_k$ does not converge to $v_\lambda$ in $X$. As a consequence, we infer that  $||v_k|| < \liminf ||v_k||$. Since $(v_k) \in \mathcal{N_\lambda}^-$ we also mention $E_\lambda(v_k)\geq E_\lambda(sv_k)$ holds for any $s\geq t_\lambda^{n,+}(v_k)$. Now we claim that 
$t_\lambda^{n,-}(v_\lambda) > t_\lambda^{n,+}(v_k)$. The proof for the claim follows using the fact that $v \mapsto E'_\lambda(v)v$ is weakly lower semicontinuous. Indeed, we obtain that
\begin{equation}
0 = E'_\lambda(t_\lambda^{n,-}(v_\lambda)v_\lambda) v_\lambda < \liminf E'_\lambda(t_\lambda^{n,-}(v_\lambda)v_k)v_k.
\end{equation} 
As a consequence, $E'_\lambda(t_\lambda^{n,-}(v_\lambda)v_k)v_k > 0$ for any $k$ large enough. The last statement shows that $t_\lambda^{n,-}(v_\lambda) \in (t_\lambda^{n,+}(v_k),t_\lambda^{n,-}(v_k))$, see Figures 1, 2 and 3.
Therefore, using the last inequality together with the fact that $v_k$ does not converge to $v_\lambda$  in $X$ , we deduce that 
$$
\begin{array}{rcl}
E_\lambda({t^{n,-}_\lambda(v_\lambda)v_\lambda})< \liminf E_\lambda({t^{n,-}_\lambda(v_k)v_k})\leq \lim E_\lambda(v_k)=\mathcal{E}^2_\lambda.
\end{array} 
$$ 
This is a contradiction due the fact that $t^{n,-}_\lambda(v_\lambda)v_\lambda\in \mathcal{N_\lambda}^-$. To sum up, we have been showed that $v_k\to v_\lambda$ in $X$. Using the strong convergence in $X$ it follows also that $ \mathcal{E}_\lambda ^2=\lim E_\lambda(v_k)=E_\lambda(v_\lambda)$. This ends the proof.
\end{proof}

\begin{lem}\label{sing-u}
	Suppose $(Q)$ and $(V_1)-(V_2)$. Assume also that $\lambda\in (0,\lambda_n)$. Then $\mathcal{E}_\lambda^1 =E_\lambda(u_{_{\lambda}}) < 0$.
\end{lem}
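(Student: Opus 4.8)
The plan is to prove the stronger, pointwise statement that $E_\lambda(u)<0$ for \emph{every} $u\in\mathcal N_\lambda^+$. Since $\lambda\in(0,\lambda_n)$, Proposition \ref{compar} guarantees $\mathcal N_\lambda^+\neq\emptyset$ (and, together with the minimization argument used for $\mathcal N_\lambda^-$ in Lemma \ref{converg}, that $\mathcal E_\lambda^1$ is attained at some $u_\lambda\in\mathcal N_\lambda^+$), so the pointwise bound immediately yields $\mathcal E_\lambda^1=E_\lambda(u_\lambda)<0$; combined with Lemma \ref{coercive} it also shows $\mathcal E_\lambda^1>-\infty$. Intuitively this is just the observation that, along the fibering $\phi(t)=E_\lambda(tu)$, the point $t^{n,+}_\lambda(u)$ is the local minimum reached after $\phi$ has already become negative near $t=0$ (the concave term $-\tfrac{\lambda t^q}{q}\|u\|_q^q$ dominates there since $q<2$), but the cleanest route is the following algebraic one.

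The first step is to eliminate the nonlocal term using the Nehari constraint. If $u\in\mathcal N_\lambda^+\subset\mathcal N_\lambda$, then \eqref{nehari} gives
\begin{equation*}
\Int(I_\alpha*|u|^p)|u|^pdx=||u||^2-\lambda\Int|u|^qdx .
\end{equation*}
Inserting this identity into the expression \eqref{segunda} for $E''_\lambda(u)(u,u)$ and using that membership in $\mathcal N_\lambda^+$ forces $E''_\lambda(u)(u,u)>0$, a short computation (the Choquard term cancels out) gives
\begin{equation*}
(2p-q)\,\lambda\Int|u|^qdx>(2p-2)\,||u||^2 .
\end{equation*}
Inserting the same identity into \eqref{functional} gives
\begin{equation*}
E_\lambda(u)=\frac{p-1}{2p}||u||^2-\frac{2p-q}{2pq}\,\lambda\Int|u|^qdx .
\end{equation*}

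Finally I would combine these. Since the coefficient of $\lambda\int_{\mathbb{R}^N}|u|^q$ in the last identity is negative, the inequality above produces
\begin{equation*}
E_\lambda(u)<\frac{p-1}{2p}||u||^2-\frac{2p-q}{2pq}\cdot\frac{2p-2}{2p-q}||u||^2=\frac{(p-1)(q-2)}{2pq}||u||^2 .
\end{equation*}
By hypothesis $(Q)$ we have $p>1$ and $1<q<2$, hence $(p-1)(q-2)<0$, so the right-hand side is strictly negative for $u\neq 0$. Therefore $E_\lambda(u)<0$ for all $u\in\mathcal N_\lambda^+$, and in particular $\mathcal E_\lambda^1=E_\lambda(u_\lambda)<0$. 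The argument is essentially bookkeeping; the only places where the hypotheses genuinely enter are Proposition \ref{compar} (which needs $\lambda<\lambda_n$ to know $\mathcal N_\lambda^+$ is nonempty and carries a minimizer) and the sign conditions $p>1$, $1<q<2$ from $(Q)$ that make the final quantity negative, so I do not expect any real analytic obstacle.
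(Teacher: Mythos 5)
Your argument is correct, and it proves the lemma by a genuinely different route than the paper. The paper works with the two Rayleigh quotients: it uses the ordering $R_e(tu)<R_n(tu)$ for $t\in(0,t_e(u))$ together with $t_\lambda^{n,+}(u)<t_e(u)$, so that $R_e(t_\lambda^{n,+}(u)u)<R_n(t_\lambda^{n,+}(u)u)=\lambda$, and then invokes Remark \ref{rmk11} to conclude $E_\lambda(t_\lambda^{n,+}(u)u)<0$, hence $\mathcal{E}_\lambda^1<0$. You instead bypass the quotients entirely: eliminating the Choquard term via the Nehari constraint, the condition $E''_\lambda(u)(u,u)>0$ becomes $(2p-q)\lambda\|u\|_q^q>(2p-2)\|u\|^2$, and the reduced energy $E_\lambda(u)=\frac{p-1}{2p}\|u\|^2-\frac{2p-q}{2pq}\lambda\|u\|_q^q$ then yields $E_\lambda(u)<\frac{(p-1)(q-2)}{2pq}\|u\|^2<0$ pointwise on $\mathcal{N}_\lambda^+$ (I checked the algebra; it is right, and $2p>2>q$ guarantees the sign manipulations are legitimate). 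Your approach is more elementary and self-contained — it needs neither the $Q_n$/$Q_e$ geometry nor Remark \ref{rmk11}, and it even gives a quantitative negative upper bound valid for any $\lambda>0$ with $\mathcal{N}_\lambda^+\neq\emptyset$ — while the paper's route fits into the Rayleigh-quotient machinery that is reused later for the sign analysis of $E_\lambda(v_\lambda)$ in Theorem \ref{theorm1}. One small caution: your parenthetical appeal to attainment of $\mathcal{E}_\lambda^1$ (via the minimization argument of Lemma \ref{converg}/\ref{converg1}) is logically downstream of this lemma in the paper, since Lemma \ref{converg1} uses $\mathcal{E}_\lambda^1<0$ to show the weak limit is nonzero; but this does not affect your proof, because the real content here is the bound $\mathcal{E}_\lambda^1<0$ on the infimum, which you obtain from nonemptiness of $\mathcal{N}_\lambda^+$ (Proposition \ref{compar}) and the pointwise estimate alone.
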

\begin{proof}
	Let $\lambda\in (0,\lambda_n)$ be fixed. It is easy to see that $R_e(tu)<R_n(tu),~t\in(0,t_{e}(u))$
	holds for all $u\in X\setminus\{0\}$. Recall also that $t_\lambda^{n,+}(u) < t_e(u)$. Hence we also see that  $R_e(t_\lambda^{n,+}(u)u)<R_n(t_\lambda^{n,+}(u)u)=\lambda$. The last assertion implies that $E_\lambda(t_\lambda^{n,+}(u)u)<0$, see Remark \ref{rmk11}. Moreover, by using the fact that $t_\lambda^{n,+}(u)u\in \mathcal{N}_\lambda^+$, we infer that  
	$$\mathcal{E}_\lambda^1=E_\lambda(u_{_{\lambda}})=\inf_{u\in \mathcal{N}_\lambda^+}E_\lambda(u)\leq E_\lambda(t_\lambda^{n,+}(u)u)<0. $$
	This ends the proof. 
\end{proof}

\begin{lem}\label{converg1}
	Suppose $(Q)$ and $(V_1)-(V_2)$ and  $\lambda \in (0, \lambda_n)$. Let $(u_k)\subset\mathcal{N_\lambda}^+ $ be a minimizer sequence for $E_\lambda$ in $\mathcal{N}_\lambda^+$. Then, there exists $u_\lambda\in X\setminus \{0\}$ such that, up to a subsequence, $u_k\to u_\lambda$ in $X$ where $u_\lambda\in\mathcal{N_\lambda}^+$. Furthermore, we obtain that $\mathcal{E}_\lambda^1= E_\lambda(u_\lambda)$.
\end{lem}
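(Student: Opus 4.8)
The plan is to mirror the structure of Lemma \ref{converg}, but to exploit the extra information that $E_\lambda$ is negative on $\mathcal N_\lambda^+$ (Lemma \ref{sing-u}) so that the non-vanishing step becomes easier. First I would take a minimizing sequence $(u_k)\subset\mathcal N_\lambda^+$ with $E_\lambda(u_k)=\mathcal E_\lambda^1+o_k(1)$. By Lemma \ref{coercive} the functional $E_\lambda$ is coercive on $\mathcal N_\lambda$, hence $(u_k)$ is bounded in $X$; passing to a subsequence, $u_k\rightharpoonup u_\lambda$ in $X$, and by the compact embedding $X\hookrightarrow L^r(\mathbb R^N)$ for $r\in[1,2^*)$ together with the Hardy--Littlewood--Sobolev inequality (Lemma \ref{lemaHardy}) one gets the convergences in \eqref{limites} with $v_k,v_\lambda$ replaced by $u_k,u_\lambda$, namely $\int(I_\alpha*|u_k|^p)|u_k|^pdx\to\int(I_\alpha*|u_\lambda|^p)|u_\lambda|^pdx$ and $\int|u_k|^qdx\to\int|u_\lambda|^qdx$.

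Next I would show $u_\lambda\neq 0$. This is where the sign of the energy does the work: since $E_\lambda(u_k)\to\mathcal E_\lambda^1<0$ by Lemma \ref{sing-u}, and
$$E_\lambda(u_k)=\frac{1}{2}\|u_k\|^2-\frac{1}{2p}\Int(I_\alpha*|u_k|^p)|u_k|^pdx-\frac{\lambda}{q}\Int|u_k|^qdx,$$
if $u_\lambda\equiv0$ then the Choquard term and the $L^q$ term both tend to $0$ by the compact embedding and Lemma \ref{lemaHardy}, forcing $\frac12\|u_k\|^2\to\mathcal E_\lambda^1<0$, which is absurd. Hence $u_\lambda\neq0$, and in fact $\Int|u_\lambda|^qdx>0$.

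Then I would prove strong convergence $u_k\to u_\lambda$ in $X$. Arguing by contradiction, suppose $\|u_\lambda\|<\liminf\|u_k\|$. Using \eqref{limites}-type convergences one checks $u_\lambda$ still satisfies the Nehari-type identity in the limit, but the relevant point is the projection: by Proposition \ref{compar} there is a unique $t_\lambda^{n,+}(u_\lambda)>0$ with $t_\lambda^{n,+}(u_\lambda)u_\lambda\in\mathcal N_\lambda^+$, and by weak lower semicontinuity of $v\mapsto E_\lambda'(v)v$ (together with the strong $L^r$ and Choquard convergences) one locates $t_\lambda^{n,+}(u_\lambda)$ relative to the fibering roots of $u_k$, so that $E_\lambda(t_\lambda^{n,+}(u_\lambda)u_\lambda)<\liminf E_\lambda(t_\lambda^{n,+}(u_\lambda)u_k)\le \liminf E_\lambda(u_k)=\mathcal E_\lambda^1$ — here using that $t_\lambda^{n,+}(u_k)=1$ is the \emph{minimum} of $\phi_{u_k}$ on $[0,t_\lambda^{n,-}(u_k)]$. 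This contradicts $t_\lambda^{n,+}(u_\lambda)u_\lambda\in\mathcal N_\lambda^+$ and the definition of $\mathcal E_\lambda^1$. Therefore $u_k\to u_\lambda$ strongly, which gives $u_\lambda\in\mathcal N_\lambda^+$ (the sets $\mathcal N_\lambda^+$ and $\mathcal N_\lambda^0$ are separated for $\lambda<\lambda_n$ by Proposition \ref{compar}, and the limit cannot fall into $\mathcal N_\lambda^0$ since that set is empty) and $\mathcal E_\lambda^1=\lim E_\lambda(u_k)=E_\lambda(u_\lambda)$.

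The main obstacle I anticipate is the strong-convergence step, specifically ruling out that the weak limit lands on the boundary $\mathcal N_\lambda^0$ and justifying the fibering comparison with the $L^q$-term present: one must be careful that the minimizer of $\phi_{u_k}$ on the relevant branch is attained at $t=1$ and that the comparison $E_\lambda(t_\lambda^{n,+}(u_\lambda)u_k)\ge E_\lambda(u_k)$ does not hold in the wrong direction. The emptiness of $\mathcal N_\lambda^0$ for $\lambda\in(0,\lambda_n)$ (Proposition \ref{compar} and the following remark) is what ultimately closes this gap, exactly as in the proof of Lemma \ref{converg}.
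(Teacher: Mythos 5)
Your overall scheme is the same as the paper's: boundedness via coercivity, weak and compact convergences, non-vanishing from $\mathcal{E}_\lambda^1<0$, and a contradiction argument using the unique projection $t_\lambda^{n,+}(u_\lambda)$ when strong convergence fails. Your non-vanishing step is fine and in fact slightly more direct than the paper's (the paper instead uses the Nehari identity for $u_k\in\mathcal{N}_\lambda^+$ to get $\lambda\int|u_\lambda|^q dx\geq -\tfrac{2q}{2-q}\mathcal{E}_\lambda^1>0$, invoking Lemma \ref{sing-u}); both rest on the same fact $\mathcal{E}_\lambda^1<0$.

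The one genuine gap is the middle inequality in your chain, $E_\lambda\bigl(t_\lambda^{n,+}(u_\lambda)u_k\bigr)\leq E_\lambda(u_k)$. Since $t=1=t_\lambda^{n,+}(u_k)$ is only the minimum of $\phi_{u_k}$ on $[0,t_\lambda^{n,-}(u_k)]$, and $\phi_{u_k}$ decreases to $-\infty$ beyond $t_\lambda^{n,-}(u_k)$, this inequality requires knowing that $t_\lambda^{n,+}(u_\lambda)\leq t_\lambda^{n,-}(u_k)$, which you do not establish (your appeal to weak lower semicontinuity of $v\mapsto E'_\lambda(v)v$ is not spelled out to this end). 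The paper avoids this localization entirely: for $t\in(0,1]$ it shows
$\frac{d}{dt}E_\lambda(tu_\lambda)=E'_\lambda(tu_\lambda)u_\lambda<\liminf E'_\lambda(tu_k)u_k=\frac{d}{dt}E_\lambda(tu_k)\leq 0$
(using $E'_\lambda(u_k)u_k=0$, $E''_\lambda(u_k)(u_k,u_k)>0$ and the strict weak lower semicontinuity coming from $\|u_\lambda\|<\liminf\|u_k\|$), concludes $t_\lambda^{n,+}(u_\lambda)\geq 1$, and then compares along the fiber of $u_\lambda$ itself: $E_\lambda(t_\lambda^{n,+}(u_\lambda)u_\lambda)\leq E_\lambda(u_\lambda)<\liminf E_\lambda(u_k)=\mathcal{E}_\lambda^1$, contradicting $t_\lambda^{n,+}(u_\lambda)u_\lambda\in\mathcal{N}_\lambda^+$. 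Your variant can be repaired — under the contradiction hypothesis the convergence of the Choquard terms and $\liminf\|u_k\|>\|u_\lambda\|$ give $t_\lambda^{n,+}(u_\lambda)<t_n(u_\lambda)\leq\liminf t_n(u_k)<t_\lambda^{n,-}(u_k)$ for large $k$ via \eqref{tn} — but as written the step you yourself flag as the main obstacle is exactly where the argument is incomplete, and the paper's derivative comparison is the cleaner way to close it.
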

\begin{proof}
	Initially, up to a subsequence, arguing as done in the proof of Lemma \ref{converg} we obtain we obtain that $u_k\to u_\lambda$ in $L^r(\mathbb{R}^N)$, $u_k(x)\to u_\lambda(x)$  and $|u_k|\leq h_r$ a.e. in $ \mathbb{R}^N$ where $h_r \in L^r(\mathbb{R}^N), r \in [1, 2^*)$.  Furthermore, by using the fact that $u_k\in\mathcal{N_\lambda}^+$, we also obtain
$$	\begin{array}{rcl}
	\lambda\Int|u_k|^qdx=\left(\dfrac{2q}{2-q}\right)\left(\dfrac{2(p-1)}{4p}\right)\Int (I_\alpha*|u_k|^p)|u_k|^pdx-\dfrac{2q}{2-q}E_\lambda(u_k)
\end{array}.$$ 
Taking into account that $(u_k)\subset\mathcal{N_\lambda}^+ $ is a minimizer for $E_\lambda$, together with hypothesis $(Q)$ and \eqref{limites}, we infer that 
$$\lambda \Int |u_\lambda|^qdx\geq- \dfrac{2q}{2-q}\mathcal{E}_\lambda^1>0.$$ In particular, we obtain that $u_\lambda\neq 0$. Notice also that $\mathcal{E}_\lambda^1$ is negative, see  Lemma \ref{sing-u}. 

From now on the proof of the strong convergence $u_k\to u_\lambda$ in $X$ follows arguing by contradiction. Assume that $u_k$ does not converge to $u_\lambda$ in $X$. In particular, we observe that 
$||u_\lambda|| < \liminf ||u_k||$. According to Proposition \ref{compar} there exists an unique $t_\lambda^{n,+}(u_\lambda)>0$ such that $t_\lambda^{n,+}(u_\lambda)u_\lambda\in \mathcal{N_\lambda}^+$. Furthermore, we know that  $\phi'(t_\lambda^{n,+}(u_\lambda))=E'_\lambda(t_\lambda^{n,+}(u_\lambda)u_\lambda)u_\lambda=0$ and $\phi(t_\lambda^{n,+}(u_\lambda))=E_\lambda(t_\lambda^{n,+}(u_\lambda)u_\lambda)<0.$ As a consequence, by using again the compact embedding $X\hookrightarrow L^r(\mathbb{R}^N)$ for each $r\in [1,2^*)$ and the fact that $(u_k)\subset \mathcal{N_\lambda}^+$, we obtain
  \begin{equation}
\frac{d}{dt}E_\lambda (tu_\lambda) = E'_\lambda(t u_\lambda) u_\lambda < \liminf E'_\lambda(t u_k) u_k = \frac{d}{dt}E_\lambda(tu_k) \leq 0
\end{equation}
holds for any $t\in (0,1]$. Here was used the fact that $E'_\lambda(u_k) u_k=0$ and $E''_\lambda(u_k) (u_k, u_k) > 0$ holds for each $k \in \mathbb{N}$. The last assertion implies also that $t_\lambda^{n,+}(u_\lambda) \geq 1$.  Under these conditions, using that $t_\lambda^{n,+}(u_\lambda)u_\lambda\in\mathcal{N_\lambda}^+$,  we deduce 
 $$E_\lambda(t_\lambda^{n,+}(u_\lambda)u_\lambda)\leq E_\lambda(u_\lambda)<\liminf E_\lambda(u_k)=\mathcal{E}^1_\lambda.$$ This is a contradiction proving that $u_k\to u_\lambda$ in $X$. This finishes the proof. 
\end{proof}

\begin{prop}\label{exx}
	Suppose $(Q)$ and $(V_1)-(V_2)$. Then the energy functional $E_{\lambda}$ admits at least two critical points $u_{\lambda}$ and $v_{\lambda}$ for each $\lambda \in (0, \lambda_n)$. Furthermore, $u_\lambda$ and $v_\lambda$ are strictly positive in $\mathbb{R}^N$.
\end{prop}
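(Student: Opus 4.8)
The plan is to establish the existence of two critical points by minimizing $E_\lambda$ separately over $\mathcal{N}_\lambda^+$ and $\mathcal{N}_\lambda^-$, and then upgrading the minimizers to free critical points and to strictly positive functions. First I would fix $\lambda \in (0,\lambda_n)$ and recall from Proposition \ref{compar} that $\mathcal{N}_\lambda = \mathcal{N}_\lambda^+ \cup \mathcal{N}_\lambda^-$ with $\mathcal{N}_\lambda^0 = \emptyset$, so that for every $u \in X \setminus \{0\}$ the fibering map has exactly the two critical points $t_\lambda^{n,+}(u) u \in \mathcal{N}_\lambda^+$ and $t_\lambda^{n,-}(u) u \in \mathcal{N}_\lambda^-$; in particular both Nehari submanifolds are nonempty. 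By Lemma \ref{coercive} the functional $E_\lambda$ is coercive and bounded from below on $\mathcal{N}_\lambda$, hence on each of $\mathcal{N}_\lambda^+$ and $\mathcal{N}_\lambda^-$, so the infima $\mathcal{E}_\lambda^1$ and $\mathcal{E}_\lambda^2$ are finite.

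Next I would produce the minimizers. Take a minimizing sequence $(u_k) \subset \mathcal{N}_\lambda^+$ for $\mathcal{E}_\lambda^1$; Lemma \ref{converg1} gives, up to a subsequence, strong convergence $u_k \to u_\lambda$ in $X$ with $u_\lambda \in \mathcal{N}_\lambda^+$ and $E_\lambda(u_\lambda) = \mathcal{E}_\lambda^1$. Similarly, a minimizing sequence $(v_k) \subset \mathcal{N}_\lambda^-$ for $\mathcal{E}_\lambda^2$ converges strongly, by Lemma \ref{converg}, to some $v_\lambda \in \mathcal{N}_\lambda^-$ with $E_\lambda(v_\lambda) = \mathcal{E}_\lambda^2$. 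Since $u_\lambda \in \mathcal{N}_\lambda^+$ and $v_\lambda \in \mathcal{N}_\lambda^-$ are disjoint sets, $u_\lambda \neq v_\lambda$. Because $u_\lambda$ is a minimizer of $E_\lambda$ restricted to the natural constraint $\mathcal{N}_\lambda$ (being a minimizer on the open-in-$\mathcal{N}_\lambda$ piece $\mathcal{N}_\lambda^+$, which by Lemma \ref{E-} and $\mathcal{N}_\lambda^0=\emptyset$ is relatively open), Lemma \ref{criticalpoint} applies and yields $E_\lambda'(u_\lambda)\psi = 0$ for all $\psi \in X$; the same argument on $\mathcal{N}_\lambda^-$, which is likewise relatively open in $\mathcal{N}_\lambda$ once $\mathcal{N}_\lambda^0 = \emptyset$, gives $E_\lambda'(v_\lambda)\psi = 0$ for all $\psi \in X$. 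Thus $u_\lambda$ and $v_\lambda$ are two distinct free critical points of $E_\lambda$.

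Finally I would address positivity. Since $E_\lambda$ is even and $\||u|\| = \|u\|$, $\int (I_\alpha * ||u||^p)||u||^p = \int (I_\alpha*|u|^p)|u|^p$ and $\int ||u||^q = \int |u|^q$, replacing $u_\lambda$ and $v_\lambda$ by $|u_\lambda|$ and $|v_\lambda|$ keeps them in $\mathcal{N}_\lambda^+$ and $\mathcal{N}_\lambda^-$ respectively with the same energy, so we may assume $u_\lambda, v_\lambda \geq 0$. Then $u_\lambda$ solves $-\Delta u_\lambda + V(x) u_\lambda = (I_\alpha*|u_\lambda|^p)|u_\lambda|^{p-2}u_\lambda + \lambda u_\lambda^{q-1} \geq 0$ weakly on $\mathbb{R}^N$, and by the strong maximum principle (applied locally, after noting the right-hand side lies in a suitable $L^s_{loc}$ space by Remark \ref{rmHardy}, and using Proposition \ref{ahead} to test against $H^1$ functions) we conclude $u_\lambda > 0$ in $\mathbb{R}^N$; likewise $v_\lambda > 0$. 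The main obstacle I anticipate is the bookkeeping required to justify that $\mathcal{N}_\lambda^+$ and $\mathcal{N}_\lambda^-$ are relatively open in $\mathcal{N}_\lambda$ (so that the constrained minimizers are genuine local extrema of $E_\lambda|_{\mathcal{N}_\lambda}$ and Lemma \ref{criticalpoint} is applicable), together with the regularity input needed to invoke the strong maximum principle for the nonlocal right-hand side; both are handled by combining $\mathcal{N}_\lambda^0 = \emptyset$, Lemma \ref{E-}, and the estimates in Remark \ref{rmHardy} and Proposition \ref{ahead}.
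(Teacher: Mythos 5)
Your proposal is correct and follows essentially the same route as the paper: minimize $E_\lambda$ on $\mathcal{N}_\lambda^+$ and $\mathcal{N}_\lambda^-$ via Lemmas \ref{converg1} and \ref{converg}, upgrade to free critical points via Lemma \ref{criticalpoint}, pass to $|u_\lambda|$, $|v_\lambda|$ by evenness, and conclude strict positivity by the strong maximum principle combined with nontriviality (Lemma \ref{E-}, Lemma \ref{sing-u}). The only cosmetic difference is that the paper justifies the maximum-principle step through the $C^{1,\beta}_{loc}$ regularity of Theorem \ref{regular} in the Appendix rather than the $L^s_{loc}$ bookkeeping you sketch.
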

\begin{proof}
In view of Proposition \ref{coercive} we know that $E_\lambda$ is coercive and bounded from below in $\mathcal{N_\lambda}^-$. Let $(v_k)$ be a minimizer sequence for $E_\lambda$ in $\mathcal{N_\lambda}^-$. It is easy to see that $(v_k)$ is bounded in $X$. Up to a subsequence there exists $v_\lambda\in X$ such that
$v_k\rightharpoonup v_\lambda \,\, \mbox{in } \,\, X$.
It follows from the Lemma \ref{converg} that $v_k\rightarrow v_\lambda$ in  $X$.
Moreover, the last assertion says also that 
$\mathcal{E}_\lambda ^2=\lim E_\lambda(v_k)=E_\lambda(v_\lambda)$ holds for each $\lambda \in (0, \lambda_n)$. Hence we obtain that $v_\lambda \neq 0$. Therefore, by using Lemma \ref{criticalpoint}, we obtain that $v_\lambda$ is a weak solution to Problem \eqref{eq1}. Since the functional $E_\lambda$ is even we know that $E_\lambda(v_\lambda) = E_\lambda(|v_\lambda|)$. Furthermore, we also obtain that
$
E'_\lambda(|v_\lambda|) |v_\lambda| = E'_\lambda(v_\lambda) v_\lambda = 0.
$
As a consequence, we mention that $ |v_\lambda| \in \mathcal{N_\lambda}$. Recall that $v \mapsto E''_\lambda(v) (v,v)$ is an even functional. Hence $E''_\lambda(|v_\lambda|) (|v_\lambda|,|v_\lambda|) = E''_\lambda(v_\lambda) (v_\lambda,v_\lambda) < 0$ proving that 
$|v_\lambda| \in \mathcal{N_\lambda}^{-}$. Then $|v_\lambda|$ is a local minimizer in $\mathcal{N}_\lambda^-$ showing that $|v_\lambda|$ is now a critical point for the energy functional $E_\lambda$. Hence, without any loss of generality, we assume that $v_\lambda\geq 0$ in $\mathbb{R}^N$. Then the functional $E_\lambda$ admits at least one critical point $v_\lambda \in X$ for each $\lambda \in (0, \lambda_n)$ which satisfies $v_\lambda \geq 0$ in $\mathbb{R}^N$. Now we infer that $v_\lambda \in C^{1, \beta}_{loc}(\mathbb{R}^N)$ for some $\beta \in (0, 1)$, see Theorem \ref{regular} in Appendix. Now we shall prove that $v_\lambda$ is strictly positive. Arguing by contraction we assume that there exists $x_0 \in \mathbb{R}^N$ such that $v_\lambda(x_0) = 0$. Notice also that $v_\lambda \in C^{1,\beta} (B_r(x_0))$ for some $\beta \in (0,1)$ and for each $r > 0$. Therefore $v_\lambda$ satisfies the following inequalities
\begin{equation}
\left\{
\begin{array}{rcl}
-\Delta v_\lambda + V(x) v_\lambda &\geq& 0 \, \mbox{ in }\,    B_r(x_0),  \\
v_\lambda &\geq& 0  \, \mbox{on} \, \partial B_r(x_0).
\end{array}
\right.
\end{equation}
Hence the strong maximum principle for elliptic operators of second order on bounded domains implies that $v_\lambda > 0$ in $B_r(x_0)$ or $v_\lambda \equiv 0$ in $B_r(x_0)$. Assuming that $v \equiv 0$ in $B_r(x_0)$ and using the fact that $r > 0$ is arbitrary we
ensure that $v_\lambda \equiv 0$ in $\mathbb{R}^N$. This is a contradiction due the fact that $\|v_\lambda\| \geq c > 0$ for each $\lambda \in (0, \lambda_n)$, see
Lemma \ref{E-}. Hence we obtain that $v_\lambda > 0$ in $\mathbb{R}^N$.

Now we shall prove that the functional $E_\lambda$ has another critical point.
Arguing as was done just above we know that $E_\lambda$ is coercive and bounded from below in $\mathcal{N_\lambda}^+$. Let $(u_k)$ be a minimizer sequence for $E_\lambda$ in $\mathcal{N_\lambda}^+$. It is easy to see that $(u_k)$ is also bounded in $X$. Up to a subsequence there exists $u_\lambda\in X$ such that
$u_k\rightharpoonup u_\lambda \,\, \mbox{in } \,\, X$.
It follows from the Lemma \ref{converg1} that $u_k\rightarrow u_\lambda$ in $X$.
Under this condition we mention that
$$\mathcal{E}_\lambda ^1=\lim E_\lambda(u_k)=E_\lambda(u_\lambda).$$ 
Moreover, we obtain that $u_\lambda$ is a critical point for $E_\lambda$ for each $\lambda \in (0, \lambda_n)$. Recall also that $E_\lambda(u_\lambda) < 0$ for each $\lambda \in (0, \lambda_n)$, see Lemma \ref{sing-u}. The last assertion implies that $u_\lambda \neq 0$ and arguing as was done before we assume also that $u_\lambda > 0$ in $\mathbb{R}^N$. Since $\mathcal{N}_\lambda^- \cap \mathcal{N}_\lambda^+ =\emptyset$
we obtain that Problem \eqref{eq1} admits at least two positive solutions for each $\lambda \in (0, \lambda_n)$. This ends the proof.
\end{proof}

In order to prove our first main result we need to consider the case $\lambda=\lambda_e$. Under our assumptions we ensure existence of a critical point for the functional $E_\lambda$ with zero energy. More precisely, we consider the following result:

\begin{prop}\label{lambda-e}
Suppose $(Q)$ and $(V_1)-(V_2)$. Assume also that $\lambda=\lambda_e$ holds. Then the energy functional $E_{\lambda_e}$ admits a critical point  $w_{\lambda_{e}} \in X \setminus \{0\}$ such that $w_{\lambda_{e}}$ is a minimizer for the functional $\Lambda_e$.
\end{prop}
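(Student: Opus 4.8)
The plan is to transcribe the argument used for item $iii)$ of Lemma \ref{Lambda}, now with $\Lambda_e$ and $R_e$ in place of $\Lambda_n$ and $R_n$; the essential structural feature is that $R_e$ was designed precisely so that $R_e(v)=\lambda$ together with $(R_e)'(v)=0$ encode simultaneously $E_\lambda(v)=0$ and $E_\lambda'(v)=0$. First I would produce a minimizer for $\Lambda_e$: arguing exactly as in item $ii)$ of Lemma \ref{Lambda} (the functional $\Lambda_e$ is $0$-homogeneous, it is bounded below by a positive constant on the unit sphere of $X$ by the Hardy--Littlewood--Sobolev estimate \eqref{P} and the embedding $X\hookrightarrow L^q(\mathbb{R}^N)$, a normalized minimizing sequence is bounded in $X$, and $\Lambda_e$ is weakly lower semicontinuous thanks to the compact embeddings $X\hookrightarrow L^r(\mathbb{R}^N)$, $r\in[1,2^*)$), one obtains $u\in X\setminus\{0\}$ with $\Lambda_e(u)=\lambda_e=\inf_{v\in X\setminus\{0\}}\Lambda_e(v)$; this is exactly the content of Remark \ref{impor}. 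I would then set $w_{\lambda_e}:=t_e(u)\,u$, with $t_e(u)$ the unique maximum point of $Q_e(t)=R_e(tu)$ from \eqref{T}. Since $\Lambda_e$ is $0$-homogeneous, $\Lambda_e(w_{\lambda_e})=\Lambda_e(u)=\lambda_e$, so $w_{\lambda_e}$ is again a minimizer of $\Lambda_e$, and moreover $R_e(w_{\lambda_e})=Q_e(t_e(u))=\Lambda_e(u)=\lambda_e$.

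Next I would check that $w_{\lambda_e}$ is a free critical point of $E_{\lambda_e}$, combining two identities. First, because $t_e(u)$ maximizes $Q_e$ one has $0=Q_e'(t_e(u))=(R_e)'(t_e(u)u)\,u=(R_e)'(w_{\lambda_e})\,u$. Second, since $u$ is an interior minimizer of $\Lambda_e$ on the open set $X\setminus\{0\}$ and $\Lambda_e\in C^1$, Fermat's rule gives $\Lambda_e'(u)=0$; writing $\Lambda_e(u)=R_e(t_e(u)u)$ and using the chain rule together with $t_e\in C^1$, for every $w\in X$
\begin{equation*}
0=\Lambda_e'(u)w=(R_e)'(w_{\lambda_e})\big[(t_e)'(u)w\big]u+t_e(u)\,(R_e)'(w_{\lambda_e})w.
\end{equation*}
The first summand vanishes by the previous identity, and since $t_e(u)>0$ this yields $(R_e)'(w_{\lambda_e})w=0$ for all $w\in X$.

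Finally I would evaluate $(R_e)'(w_{\lambda_e})$ explicitly. Writing $v:=w_{\lambda_e}$ and recalling that $R_e(v)$ is the quotient of $N(v)=\tfrac12\|v\|^2-\tfrac{1}{2p}\int_{\mathbb{R}^N}(I_\alpha*|v|^p)|v|^p\,dx$ over $D(v)=\tfrac1q\int_{\mathbb{R}^N}|v|^q\,dx$, the quotient rule and the identity $R_e(v)=\lambda_e$ give, for every $w\in X$,
\begin{equation*}
0=(R_e)'(v)w=\dfrac{q}{\|v\|_q^q}\left[\langle v,w\rangle-\int_{\mathbb{R}^N}(I_\alpha*|v|^p)|v|^{p-2}vw\,dx-\lambda_e\int_{\mathbb{R}^N}|v|^{q-2}vw\,dx\right],
\end{equation*}
which is exactly $E_{\lambda_e}'(v)w=0$ for all $w\in X$. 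Hence $w_{\lambda_e}$ is a nontrivial critical point of $E_{\lambda_e}$ and a minimizer of $\Lambda_e$; in addition $R_e(w_{\lambda_e})=\lambda_e$ forces $E_{\lambda_e}(w_{\lambda_e})=0$ by Remark \ref{rmk11}. The only step requiring genuine work is the existence of the minimizer $u$, i.e.\ the compactness of normalized minimizing sequences for $\Lambda_e$, which is already handled by the argument of Lemma \ref{Lambda}$(ii)$ recalled in Remark \ref{impor}; the remainder is a routine transcription of Lemma \ref{Lambda}$(iii)$.
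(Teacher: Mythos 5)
Your proposal is correct and follows essentially the same route as the paper: obtain a minimizer $u$ of $\Lambda_e$ (the paper delegates this to Remark \ref{impor}, you redo the Lemma \ref{Lambda}$(ii)$ argument directly, which amounts to the same thing), set $w_{\lambda_e}=t_e(u)u$, combine $Q_e'(t_e(u))=0$ with the chain-rule expansion of $\Lambda_e'(u)=0$ to get $(R_e)'(w_{\lambda_e})=0$, and identify this with $E_{\lambda_e}'(w_{\lambda_e})=0$ via the quotient rule and $R_e(w_{\lambda_e})=\lambda_e$. No gaps; this matches the paper's proof of Proposition \ref{lambda-e}.
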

\begin{proof}
	Firstly, we recall that $\Lambda_e$ is attained, see Remark \ref{impor}. As a consequence, we also mention that	
		$$\lambda_e:=\inf_{v\in X\setminus\{0\}}\Lambda_e(v)=\Lambda_e(u):=R_e(t_e(u)u)$$
		holds for some $u \in X \setminus \{0\}$.
		Since $t_e(u)> 0$ is the maximum point of $Q_e(t):=R_e(tu)$ we observe that 
		\begin{equation}\label{max}
		0=Q_e'(t_e(u))=(R_e)'(t_e(u)u)u.
		\end{equation}
		
		On the other hand, by using the fact that $u$ is a critical point for the functional $\Lambda_e$, we infer that
		\begin{eqnarray}\label{Lambda_e}
		0&=&(\Lambda_e)'(u)w= [R_e(t_e(u)u)]'w\nonumber\\
		&=&(R_e)'(t_e(u)u)[t'_e(u) w] u + (R_e)'(t_e(u)u)t_e(u)w, \nonumber \\
		&=& [t'_e(u) w] [(R_e)'(t_e(u)u) u]  + (R_e)'(t_e(u)u)t_e(u)w,~w\in X.
		\end{eqnarray}
		It follows from \eqref{max} and \eqref{Lambda_e} that $(R_e)'(t_e(u)u)w=0$ holds for all $w\in X$. Now, we define the new function $w_{\lambda_e} := t_e(u)u$. Therefore, we obtain the following identities 
		$$0=R'_e(w_{\lambda_e})w= \|w_{\lambda_e}\|_q^{-q} E_{\lambda_e}'(w_{\lambda_e})w, \,\, \mbox{for all} \,\, w \in X.$$
		The last assertion says that $E_{\lambda_e}'(w_{\lambda_e})w = 0$ holds true for each $w \in X$. This finished the proof.
\end{proof}

At this stage, we shall consider the proof of Theorem \ref{theorm1}. This result depends on the signal for $E_\lambda(v_\lambda)$ according to the size of $\lambda > 0$. More precisely, we shall prove that $E_\lambda(v_\lambda) > 0$ for each $\lambda \in (0, \lambda_e)$. In this case, we shall prove also that  $E_\lambda(v_{\lambda_e}) = 0$ and $E_\lambda(v_\lambda) < 0$ for each $\lambda \in (\lambda_e, \lambda_n)$. These assertions are proved in the following way:

\nd{\bf The proof of Theorem \ref{theorm1} (i).}  Initially, for each $\lambda \in (0, \lambda_e)$ we obtain that Problem \eqref{eq1} admits at least two positive solutions $u_\lambda, v_\lambda \in X$, see Proposition \ref{exx}. Here we shall prove that $v_\lambda$ has positive energy for each $\lambda \in (0,\lambda_e)$, see Figure \ref{QnQemen-lam-e}. Now we claim that $t_\lambda^{n,-}(v_\lambda)=1>t_e(v_\lambda)$ holds for each $\lambda \in (0, \lambda_e)$. This can be done using the fact that there exist unique projections in the Nehari manifolds $\mathcal{N}_\lambda^-$ and $\mathcal{N}_\lambda^+$, respectively. 
\begin{figure}[!ht]
	\begin{minipage}[h]{0.49\linewidth}
		\center{\includegraphics[scale=0.6]{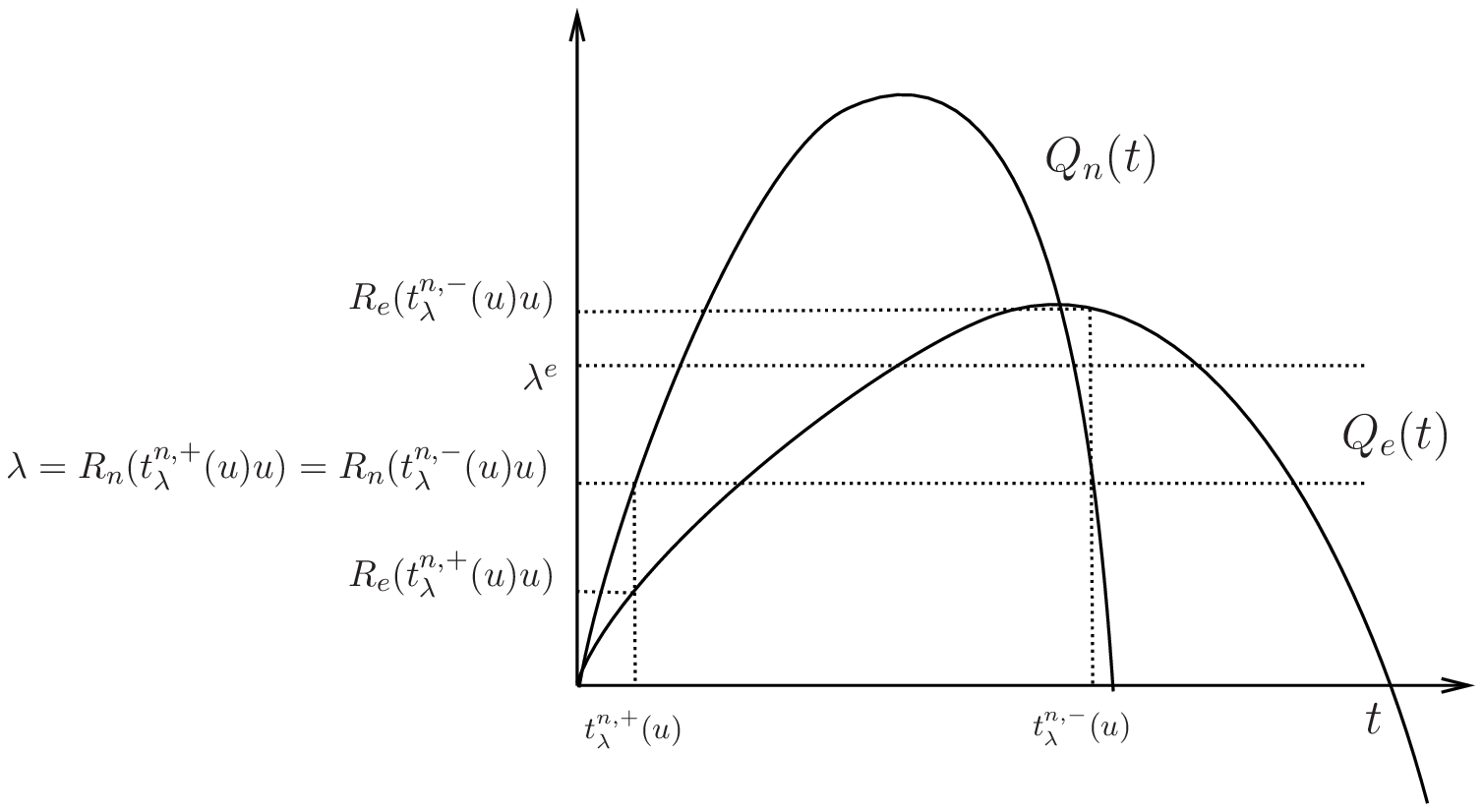}}
		\caption{$\lambda\in(0,\lambda_e)$}
		\label{QnQemen-lam-e}
	\end{minipage}
	\hfill
	\begin{minipage}[h]{0.49\linewidth}
		\center{\includegraphics[scale=0.6]{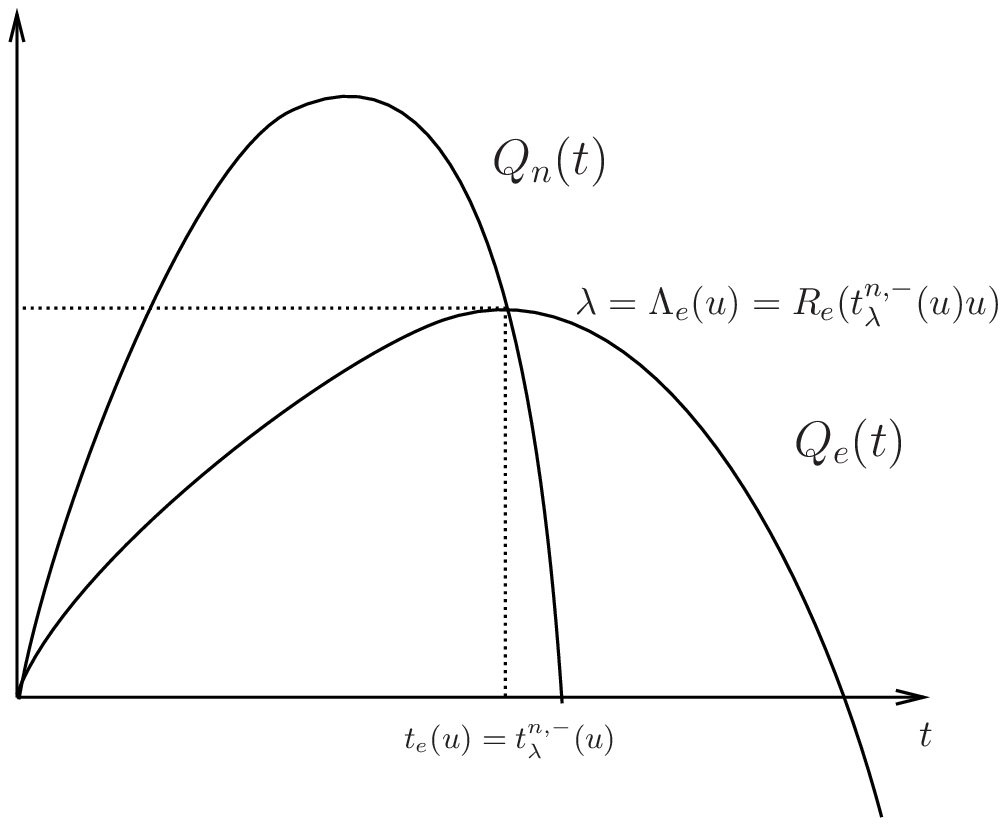}}
		\caption{ $\lambda=\lambda_e$}
		\label{QnQe-igu-lam-e}
	\end{minipage}
\end{figure}

On the other hand, we observe that $R_n(tv_\lambda)<R_e(tv_\lambda)$ holds for each $t>t_e(u)$. In particular, for $t=t_\lambda^{n,-}(v_\lambda)=1$, we get 
$\lambda=R_n(v_\lambda)=R_n(t_\lambda^{n,-}(v_\lambda)v_\lambda)<R_e(t_\lambda^{n,-}(v_\lambda)v_\lambda)=R_e(v_\lambda).$
The last assertion implies that $\mathcal{E}_\lambda^2=E_\lambda(v_\lambda)>0$, see Remark \ref{rmk11}.

\nd{\bf The proof of Theorem \ref{theorm1} (ii)}. Firstly, we put $\lambda=\lambda_e$. As was done in the proof of the previous item we obtain that Problem \eqref{eq1} admits at least two weak solutions $u_\lambda, v_\lambda \in X$, see Proposition \ref{exx}.   It follows from Proposition \ref{lambda-e} that $\lambda_e=\Lambda_e(w_{\lambda_e})$ for some $w_{\lambda_e} \in X$ where $w_{\lambda_e}$ is a critical point for the functional $E_\lambda$. In fact, the function $w_{\lambda_e}$ belongs to $\mathcal{N}_{\lambda}^-$, see Figure 6.  Now, we claim that $E_{\lambda_e}(w_{\lambda_e})=0$. Indeed, we observe that  
$t_{\lambda_e}^{n,-}(w_{\lambda_e})=t_e(w_{\lambda_e})=t_{\lambda_e}^{e,+}(w_{\lambda_e})=t_{\lambda_e}^{e,-}(w_{\lambda_e})=1$, see Figure \ref{QnQe-igu-lam-e}. Thus, we obtain the following identities 
$$\lambda_e=\Lambda_e(w_{\lambda_e})=R_e(w_{\lambda_e})=R_n(w_{\lambda_e})=R_e(t_{\lambda_e}^{n,-}(w_{\lambda_e})w_{\lambda_e}).$$
Therefore, the last assertion says that  $E_{\lambda_e}(w_{\lambda_e})=0$, see Remark \ref{rmk11}. In other words, we can find a critical point $w_{\lambda_e} \in X$ for the energy functional $E_{\lambda}$ with zero energy. In view of Proposition \ref{exx} we know also that 
\begin{equation}
E_{\lambda}(v_{\lambda}) = \inf_{w \in \mathcal{N}_\lambda^-} E_\lambda(w) \leq E_\lambda(w_{\lambda_e})= 0.
\end{equation}
Therefore $E_\lambda(v_\lambda) \leq 0$ holds true. The last estimate says that $R_e(v_\lambda) \leq \lambda_e$, see Remark \ref{rmk11}. Notice also that 
$
\lambda_e \leq \Lambda_e(v_\lambda) = R_e(t_e(v_\lambda )v_\lambda)$ holds true for some $t_e(v_\lambda) > 0$. Furthermore, by using the fact that $v_\lambda \in \mathcal{N}_\lambda^-$, one has $t_n(v_\lambda) <  t_{e}(v_\lambda)  \leq t_\lambda^{n,-}(v_\lambda) = 1$. As a consequence, we obtain that $\lambda = R_n(v_\lambda) \leq R_e(v_\lambda)$, see Figures 5 and 6. Here was used the fact that $R_n(t v) \leq R_e(t v)$ for each $t \geq t_e(v)$ with $v \in X \setminus \{0\}$. According to Remark \ref{rmk11} we obtain that $E_\lambda(v_\lambda) \geq 0$. The last estimate together with Remark \ref{rmk11} once more imply also that $$E_\lambda(v_\lambda) = 0, \quad \Lambda(v_\lambda) = \lambda_e$$ 
Hence we know that $E_{\lambda}(v_\lambda) = E_{\lambda}(w_{\lambda_e}) = 0$. As a consequence, the minimizer in $\mathcal{N}_\lambda^-$ has zero energy.   Notice also that $u_\lambda$ has negative energy, see Proposition \ref{sing-u}. This ends the proof.

\nd{\bf The proof of Theorem \ref{theorm1} iii).} As was mentioned before for each $\lambda \in (\lambda_e, \lambda_n)$ we obtain that Problem \eqref{eq1} admits at least two positive solutions $u_\lambda, v_\lambda \in X$, see Proposition \ref{exx}. Let $u\in X\setminus\{0\}$ be fixed such that 
$$\lambda_e\leq \Lambda_e(u)=R_e(t_e(u)u)<\lambda.$$
Using the last estimates we deduce that $t_\lambda^{n,-}(u)\in (0,t_e(u))$, see Figure \ref{QnQe-maior-lambda-e}. However, for each $t\in(0,t_e(u))$, we infer also that $R_e(tu)<R_n(tu)$. In particular, assuming that $t=t_\lambda^{n,-}(u)$ we get
$$R_e(t_\lambda^{n,-}(u)u)<R_n(t_\lambda^{n,-}(u)u)=\lambda.$$
The last assertion implies that $E_\lambda(t_\lambda^{n,-}(u)u)<0$, see Remark \ref{rmk11}. Furthermore, by using the fact that $t_\lambda^{n,-}(u)u\in \mathcal{N}_\lambda^-$, one has 
$\mathcal{E}_\lambda^2\leq E_\lambda(t_\lambda^{n,-}(u)u)<0.$
This ends the proof. 
\begin{figure}[!ht]
	\begin{minipage}[h]{0.49\linewidth}
		\center{\includegraphics[scale=0.6]{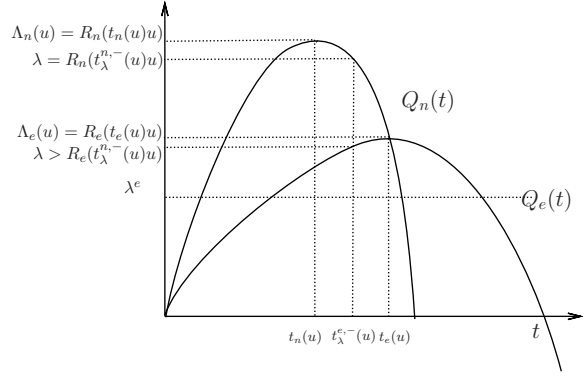}}
		\caption{$\lambda\in(\lambda_e,\lambda_n]$}
		\label{QnQe-maior-lambda-e}
	\end{minipage}
\end{figure}

\section{Proof of Theorems \ref{theorem2},  \ref{theorem3} and \ref{theorem4}}	
In this section we shall prove Theorems \ref{theorem2},  \ref{theorem3} and \ref{theorem4}. Firstly we consider the proof of Theorem \ref{theorem2}. Assume that $(\lambda_j)\subset (0,\lambda_n)$ such that  $\lambda_j\to\widetilde\lambda \in (0, \lambda_n)$ as $j \rightarrow \infty$. Recall that 
$\mathcal{E}^1_{\lambda}:=E_\lambda(u_\lambda)$ and $\mathcal{E}^2_{\lambda}:=E_\lambda(v_\lambda)$. 
Here we shall consider the proof for the function $\mathcal{E}^1_{\lambda}$. A similar proof can be done for the function $\mathcal{E}^2_{\lambda}$.

\begin{prop}\label{F}
Suppose $(Q)$ and $(V_1)-(V_2)$. Let $u_{\widetilde\lambda} \in \mathcal{N}_{\widetilde \lambda}^+$ be the weak solution for the Problem \eqref{eq1} for some $\widetilde \lambda \in (0, \lambda_n)$. Then $t_{\lambda_j}^{n,+}(u_{\widetilde\lambda}) \rightarrow t_{\widetilde\lambda}^{n,+}(u_{\widetilde\lambda})=1$ as $j\to\infty$ where $t_{\lambda_j}^{n,+}(u_{\widetilde\lambda})$ and $t_{\widetilde\lambda}^{n,+}(u_{\widetilde\lambda})$ are given by Proposition \ref{compar}. 	
\end{prop}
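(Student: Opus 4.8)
The statement asserts continuity of the fibering projection $t^{n,+}_\lambda(u)$ in the parameter $\lambda$ at a fixed point $u=u_{\widetilde\lambda}$, together with the normalization $t^{n,+}_{\widetilde\lambda}(u_{\widetilde\lambda})=1$. The normalization is immediate: since $u_{\widetilde\lambda}\in\mathcal N_{\widetilde\lambda}^+$ is a minimizer, the fibering map $\phi(t)=E_{\widetilde\lambda}(tu_{\widetilde\lambda})$ has its $\mathcal N^+$-critical point at $t=1$, so by uniqueness of the projection (Proposition \ref{compar}) we get $t^{n,+}_{\widetilde\lambda}(u_{\widetilde\lambda})=1$. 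For the convergence, the clean route is to recall from Proposition \ref{compar} that $t^{n,+}_{\lambda}(u_{\widetilde\lambda})$ is the unique root of $Q_n(t)=\lambda$ lying in $(0,t_n(u_{\widetilde\lambda}))$, where $Q_n(t)=R_n(tu_{\widetilde\lambda})$ is a fixed function of $t$ (it does not depend on $\lambda$), strictly increasing on $(0,t_n(u_{\widetilde\lambda}))$ with $Q_n'>0$ there. Thus $t^{n,+}_\lambda(u_{\widetilde\lambda}) = (Q_n|_{(0,t_n)})^{-1}(\lambda)$, and since the inverse of a strictly monotone $C^1$ function with nonvanishing derivative is continuous, $\lambda_j\to\widetilde\lambda$ forces $t^{n,+}_{\lambda_j}(u_{\widetilde\lambda})\to (Q_n|_{(0,t_n)})^{-1}(\widetilde\lambda)=t^{n,+}_{\widetilde\lambda}(u_{\widetilde\lambda})=1$.

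An equivalent and perhaps more self-contained way I would present it is a compactness-plus-uniqueness argument. First note $t^{n,+}_{\lambda_j}(u_{\widetilde\lambda})\in(0,t_n(u_{\widetilde\lambda}))$ for all $j$, so the sequence is bounded; moreover it is bounded away from $0$, because $Q_n(t)/t^{2-q}\to c>0$ as $t\to 0$ while $Q_n(t^{n,+}_{\lambda_j})=\lambda_j\to\widetilde\lambda>0$, which prevents $t^{n,+}_{\lambda_j}\to 0$. Hence, passing to a subsequence, $t^{n,+}_{\lambda_j}(u_{\widetilde\lambda})\to t_\ast\in(0,t_n(u_{\widetilde\lambda})]$. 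By continuity of $Q_n$ we get $Q_n(t_\ast)=\lim Q_n(t^{n,+}_{\lambda_j})=\lim\lambda_j=\widetilde\lambda$, and since $\widetilde\lambda<\lambda_n\le\Lambda_n(u_{\widetilde\lambda})=Q_n(t_n(u_{\widetilde\lambda}))=\max Q_n$, the value $\widetilde\lambda$ is strictly below the maximum, so $t_\ast\neq t_n(u_{\widetilde\lambda})$, i.e. $t_\ast\in(0,t_n(u_{\widetilde\lambda}))$. But on that interval $Q_n$ is strictly increasing, hence injective, so $t_\ast$ is the unique root there, namely $t_\ast=t^{n,+}_{\widetilde\lambda}(u_{\widetilde\lambda})=1$. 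Since every subsequence has a further subsequence converging to the same limit $1$, the whole sequence converges: $t^{n,+}_{\lambda_j}(u_{\widetilde\lambda})\to 1$.

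Alternatively, one can simply invoke the $C^1$-regularity of $u\mapsto t^{n,+}_\lambda(u)$ from Proposition \ref{compar}(ii) upgraded to joint continuity in $(\lambda,u)$ via the Implicit Function Theorem applied to $F(\lambda,t)=Q_n(t)-\lambda$ at $(\widetilde\lambda,1)$: one has $\partial_t F(\widetilde\lambda,1)=Q_n'(1)>0$ because $1=t^{n,+}_{\widetilde\lambda}(u_{\widetilde\lambda})<t_n(u_{\widetilde\lambda})$ and $Q_n'>0$ on $(0,t_n(u_{\widetilde\lambda}))$ (equivalently $E''_{\widetilde\lambda}(u_{\widetilde\lambda})(u_{\widetilde\lambda},u_{\widetilde\lambda})>0$ since $u_{\widetilde\lambda}\in\mathcal N_{\widetilde\lambda}^+$, see Proposition \ref{der-Rn}), so the implicit solution $\lambda\mapsto t^{n,+}_\lambda(u_{\widetilde\lambda})$ is $C^1$ near $\widetilde\lambda$, in particular continuous, giving the conclusion.

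The only subtlety — and the single place one must be careful — is ensuring the limit $t_\ast$ does not escape to the endpoints of $(0,t_n(u_{\widetilde\lambda}))$: the lower endpoint is excluded by the behavior $Q_n(t)\sim c\,t^{2-q}\to 0$ near $0$ together with $\lambda_j\to\widetilde\lambda>0$, and the upper endpoint is excluded by the strict inequality $\widetilde\lambda<\lambda_n\le\Lambda_n(u_{\widetilde\lambda})=\max_{t>0}Q_n(t)$, which is exactly where the hypothesis $\widetilde\lambda\in(0,\lambda_n)$ is used. Everything else is elementary monotonicity and continuity of the one-variable function $Q_n$.
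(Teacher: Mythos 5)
Your proof is correct. Your primary argument is genuinely more elementary than the paper's: you exploit that $Q_n(t)=R_n(tu_{\widetilde\lambda})$ is a fixed one-variable function, independent of $\lambda$, strictly increasing on $(0,t_n(u_{\widetilde\lambda}))$, so that $t^{n,+}_\lambda(u_{\widetilde\lambda})$ is just the continuous inverse of $Q_n$ restricted to that interval evaluated at $\lambda$; your compactness-plus-uniqueness variant is likewise sound, and you correctly identify and handle the only delicate points (the limit cannot drift to $0$ because $Q_n(t)\to 0$ as $t\to 0$ while $\lambda_j\to\widetilde\lambda>0$, and it cannot hit $t_n(u_{\widetilde\lambda})$ because $\widetilde\lambda<\lambda_n\leq \Lambda_n(u_{\widetilde\lambda})=\max_{t>0}Q_n(t)$), and the normalization $t^{n,+}_{\widetilde\lambda}(u_{\widetilde\lambda})=1$ follows from uniqueness of the projection as you say (membership in $\mathcal{N}_{\widetilde\lambda}^+$ already suffices; minimality is not needed). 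The paper instead defines $F(\lambda,t)=R_n(tu_{\widetilde\lambda})-\lambda$, checks $F(\widetilde\lambda,1)=0$ and $\partial_t F(\widetilde\lambda,1)=(R_n)'(u_{\widetilde\lambda})u_{\widetilde\lambda}>0$ (via Proposition \ref{der-Rn}, since $u_{\widetilde\lambda}\in\mathcal{N}_{\widetilde\lambda}^+$), applies the Implicit Function Theorem to get a $C^1$ branch $\lambda\mapsto t(\lambda)$, and identifies $t(\lambda)=t^{n,+}_\lambda(u_{\widetilde\lambda})$ by uniqueness of the projection onto $\mathcal{N}_\lambda^+$ — which is precisely the third alternative you sketch. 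What the paper's route buys is local $C^1$ dependence of $t^{n,+}_\lambda(u_{\widetilde\lambda})$ on $\lambda$, slightly more than is needed; what your monotonicity/compactness route buys is that it needs only continuity and strict monotonicity of $Q_n$ on $(0,t_n(u_{\widetilde\lambda}))$, avoiding the Implicit Function Theorem altogether, and it makes explicit where the hypothesis $\widetilde\lambda\in(0,\lambda_n)$ enters.
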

\begin{proof}
Since $u_{\widetilde\lambda} \in \mathcal{N}_{\widetilde \lambda}^+$ is a weak solution for the our main problem for some $ \widetilde \lambda \in (0, \lambda_n)$ we know that $t_{\widetilde\lambda}^{n,+}(u_{\widetilde\lambda}) =1$. The main idea here is to consider an auxiliary function $F:(0,\lambda_n)\times(0,\infty)\to \mathbb{R}$ defined by 
	$$F(\lambda,t)={R}_n(tu_{\widetilde\lambda})-\lambda, \lambda \in (0, \lambda_n), t > 0.$$
	Notice also that $F \in C^1((0, \lambda_n) \times (0, \infty), \mathbb{R})$. 
	According to Proposition \ref{importante} we observe that 
	\begin{equation}
		F(\widetilde\lambda,1)=0
		\,\,\mbox{and} \,\, \frac{d}{dt}F(\widetilde\lambda,t)_{|_{t=1}}=({R}_n)'(u_{\lambda})(u_{\lambda})>0.
	\end{equation}
Now, we apply the Implicit Function Theorem \cite{drabek} proving that there exist $\delta>0$ and a function in $C^1$ class denoted by $t(.):(\widetilde\lambda-\delta,\widetilde\lambda+\delta)\to\mathbb{R}$ in such way that 
	\begin{itemize}
		\item[(i)] $F(\lambda,t(\lambda))=0, \lambda \in (\widetilde\lambda-\delta,\widetilde\lambda+\delta) $;
		\item[(ii)] $\frac{d}{dt}F(\lambda,t)_{|_{t=t(\lambda)}}=({R}_n)'(t(\lambda)u_{\widetilde\lambda})(t(\lambda)u_{\widetilde\lambda})>0, {\lambda}\in(\widetilde\lambda-\delta,\widetilde\lambda+\delta)$.
	\end{itemize}
	As a consequence, we infer that $t(\lambda)=t_{\lambda}^{n,+}(u_{\widetilde\lambda})$.  In fact, we observe that
	$$F(\lambda,t_{\lambda}^{n,+}(u_{\widetilde\lambda}))={R}_n(t_{\lambda}^{n,+}(u_{\widetilde\lambda})u_{\widetilde\lambda})-\lambda=0, \lambda \in (\widetilde\lambda-\delta,\widetilde\lambda+\delta).$$
	Moreover, we see that 
	\begin{equation}  \frac{d}{dt}F(\lambda,t)_{|_{t=t_{\lambda}^{n,+}(u_{\widetilde\lambda})}}=({R}_n)'(t_{\lambda}^{n,+}(u_{\widetilde\lambda})u_{\widetilde\lambda})(t_{\lambda}^{n,+}(u_{\widetilde\lambda})u_{\widetilde\lambda})>0
	\end{equation}
	The last assertion implies that   $t_{\lambda}^{n,+}(u_{\widetilde\lambda})u_{\widetilde\lambda}\in \mathcal{N}_{\widetilde\lambda}^+$ holds for each  $\lambda\in(\widetilde\lambda-\delta,\widetilde\lambda+\delta)$.
	As a consequence, by using the uniqueness of the projection in the Nehari manifold $\mathcal{N}_{\widetilde\lambda}^+$ given by Proposition \ref{compar},  we deduce that $ t(\lambda) = t_{\lambda}^{n,+}(u_{\widetilde\lambda})u_{\widetilde\lambda}\in \mathcal{N}_{\widetilde\lambda}^+$ holds. As a product, we obtain $t_{\lambda_j}^{n,+}(u_{\widetilde\lambda}) \to t_{\widetilde\lambda}^{n,+}(u_{\widetilde\lambda})=1$ as $j\to\infty$. 	This ends the proof. 
\end{proof}

It is worthwhile to mention that there exists a weak solution $u_{\lambda_j}$ for each $j \in \mathbb{N}$ where $\lambda_j \in (0, \lambda_n)$ satisfies $\lambda_j \rightarrow \widetilde\lambda$. Consider $\mathcal{E}^1_{\lambda_j} = E^1_{\lambda_j}(u_{\lambda_j})$ and $\mathcal{E}^2_{\lambda_j} = E^2_{\lambda_j}(v_{\lambda_j})$ where $u_{\lambda_j}$ and $v_{\lambda_j}$ are minimizers in the Nehari manifolds $\mathcal{N}_{\lambda}^{+}$ and $\mathcal{N}_{\lambda}^{-}$, respectively.
The main feature here is to consider how the weak solutions $u_{\lambda_j}$ and $v_{\lambda_j}$ behave as $j \rightarrow \infty$. In this direction we shall prove the following result:

\begin{prop}\label{Q}
Suppose $(Q)$ and $(V_1)-(V_2)$. Then $(u_{\lambda_j})$ and $(v_{\lambda_j})$ are bounded sequences in $X$.
\end{prop}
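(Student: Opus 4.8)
The plan is to combine the uniform coercivity of the energy functionals on the Nehari sets with a uniform upper bound for the minimal levels $\mathcal{E}^1_{\lambda_j} = E_{\lambda_j}(u_{\lambda_j})$ and $\mathcal{E}^2_{\lambda_j} = E_{\lambda_j}(v_{\lambda_j})$. First I would record two elementary facts. Since $\lambda_j \to \widetilde\lambda \in (0,\lambda_n)$ and $(\lambda_j) \subset (0,\lambda_n)$, the numbers $a := \inf_j \lambda_j$ and $M := \sup_j \lambda_j$ satisfy $0 < a \le M < \lambda_n$. Next, inspecting the proof of Lemma \ref{coercive} one sees that the constants $C_1, C_2 > 0$ appearing there do not depend on $\lambda$; hence, for every $w \in \mathcal{N}_{\lambda_j}$,
\[
E_{\lambda_j}(w) \ \ge\ C_1\|w\|^2 - \lambda_j C_2 \|w\|^q \ \ge\ C_1\|w\|^2 - M C_2 \|w\|^q .
\]
Since $1 < q < 2$, the real function $s \mapsto C_1 s^2 - M C_2 s^q$ is bounded below on $[0,\infty)$ and tends to $+\infty$ as $s \to \infty$; therefore it suffices to bound $E_{\lambda_j}(u_{\lambda_j})$ and $E_{\lambda_j}(v_{\lambda_j})$ from above by a constant independent of $j$.

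For $(u_{\lambda_j})$ this is immediate: $E_{\lambda_j}(u_{\lambda_j}) = \mathcal{E}^1_{\lambda_j} < 0$ for every $j$ by Lemma \ref{sing-u}. Substituting into the coercivity inequality yields $C_1\|u_{\lambda_j}\|^2 < M C_2 \|u_{\lambda_j}\|^q$, and dividing by $\|u_{\lambda_j}\|^q > 0$ gives $C_1 \|u_{\lambda_j}\|^{2-q} < M C_2$, so $\|u_{\lambda_j}\|$ is bounded because $2 - q > 0$.

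For $(v_{\lambda_j})$ I would produce the required upper bound by testing the infimum $\mathcal{E}^2_{\lambda_j} = \inf_{\mathcal{N}_{\lambda_j}^-} E_{\lambda_j}$ against a single fixed function. Fix any $w \in X \setminus \{0\}$ and recall, from the analysis of $Q_n(t) = R_n(tw)$ carried out before Lemma \ref{Lambda}, that $Q_n$ vanishes at $t = 0^+$, increases on $(0, t_n(w))$, decreases on $(t_n(w), \infty)$ with $Q_n(t) \to -\infty$ as $t \to \infty$, and $\max_{t>0} Q_n(t) = \Lambda_n(w) \ge \lambda_n$. Since $0 < a \le \lambda_j < \lambda_n \le \Lambda_n(w)$, the larger root $t_{\lambda_j}^{n,-}(w)$ of $Q_n(t) = \lambda_j$ (which exists by Proposition \ref{compar}) satisfies $t_n(w) < t_{\lambda_j}^{n,-}(w) \le T_0$, where $T_0 = T_0(w,a) > t_n(w)$ is the unique point with $Q_n(T_0) = a$; crucially $T_0$ does not depend on $j$. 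Using that $t_{\lambda_j}^{n,-}(w)\,w \in \mathcal{N}_{\lambda_j}^-$ and that $E_{\lambda_j}(tw) \le \tfrac{t^2}{2}\|w\|^2$ for every $t > 0$ (the convolution term and the concave term being nonpositive), we obtain
\[
\mathcal{E}^2_{\lambda_j} \ \le\ E_{\lambda_j}\!\left(t_{\lambda_j}^{n,-}(w)\,w\right) \ \le\ \frac{\big(t_{\lambda_j}^{n,-}(w)\big)^2}{2}\,\|w\|^2 \ \le\ \frac{T_0^2}{2}\,\|w\|^2 \ =:\ K ,
\]
with $K$ independent of $j$. Plugging this into the coercivity inequality gives $C_1\|v_{\lambda_j}\|^2 - M C_2 \|v_{\lambda_j}\|^q \le K$ for all $j$, hence $\|v_{\lambda_j}\|$ is bounded.

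I expect the only delicate point to be the uniform-in-$j$ control of the projection $t_{\lambda_j}^{n,-}(w)$, that is, the fact that as $\lambda$ ranges over the compact set $[a, M] \subset (0,\lambda_n)$ the larger root of $Q_n(\cdot) = \lambda$ remains in a bounded set. This, however, is purely a consequence of the unimodal shape of $Q_n$ recalled above (alternatively, one may invoke the continuity of $\lambda \mapsto t_\lambda^{n,-}(w)$, obtained exactly as in Proposition \ref{F}). Everything else reduces to the standard coercivity estimate on the Nehari set together with the subquadratic exponent $1 < q < 2$.
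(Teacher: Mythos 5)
Your proof is correct, but it obtains the uniform upper bound on the minimal levels by a different route than the paper. The paper's argument hinges on Proposition \ref{F}: since $t_{\lambda_j}^{n,+}(u_{\widetilde\lambda})\to t_{\widetilde\lambda}^{n,+}(u_{\widetilde\lambda})=1$, testing $\mathcal{E}^1_{\lambda_j}\le E_{\lambda_j}\bigl(t_{\lambda_j}^{n,+}(u_{\widetilde\lambda})u_{\widetilde\lambda}\bigr)$ yields $\limsup_{j}\mathcal{E}^1_{\lambda_j}\le\mathcal{E}^1_{\widetilde\lambda}$, and then the coercivity estimate on $\mathcal{N}_{\lambda_j}$ together with $1<q<2$ gives boundedness, the case of $(v_{\lambda_j})$ being declared analogous (it requires the corresponding continuity statement for $t_\lambda^{n,-}$). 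You use the same $\lambda$-independent coercivity inequality from Lemma \ref{coercive}, but for $(u_{\lambda_j})$ you replace Proposition \ref{F} by the simpler observation $\mathcal{E}^1_{\lambda_j}<0$ from Lemma \ref{sing-u}, and for $(v_{\lambda_j})$ you bound $\mathcal{E}^2_{\lambda_j}$ by projecting a single fixed $w$ onto $\mathcal{N}_{\lambda_j}^-$ and controlling $t_{\lambda_j}^{n,-}(w)$ uniformly through the unimodal shape of $Q_n$ and $\inf_j\lambda_j>0$; this monotonicity argument is sound. Your route is more elementary, avoiding the implicit-function-theorem continuity of the projections, while the paper's route produces the sharper quantitative inequality \eqref{ed1}, $\limsup_j\mathcal{E}^1_{\lambda_j}\le\mathcal{E}^1_{\widetilde\lambda}$, which is reused later in the convergence and continuity arguments, so it is not merely a boundedness device. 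Incidentally, the ``delicate point'' you flag can be bypassed altogether: dropping only the concave term gives
\begin{equation*}
E_{\lambda_j}(tw)\le \frac{t^2}{2}\|w\|^2-\frac{t^{2p}}{2p}\int_{\mathbb{R}^N}\left(I_\alpha*|w|^p\right)|w|^p\,dx\le \max_{s>0}\left[\frac{s^2}{2}\|w\|^2-\frac{s^{2p}}{2p}\int_{\mathbb{R}^N}\left(I_\alpha*|w|^p\right)|w|^p\,dx\right],
\end{equation*}
a finite bound independent of $j$, so no control of $t_{\lambda_j}^{n,-}(w)$ is actually needed.
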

\begin{proof}
Firstly, we shall consider the proof for the sequence $(u_{\lambda_j})$. The proof for the sequence $(v_{\lambda_j})$ is analogous. In view of Proposition \ref{F} we observe that
\begin{equation}\label{ed1}
\limsup_{j\to\infty}\mathcal{E}^1_{\lambda_j}\leq\limsup_{j\to\infty} {E}_{\lambda_j}(t_{\lambda_j}^{n,+}(u_{\widetilde\lambda})u_{\widetilde\lambda})={E}_{\widetilde\lambda}(t_{\widetilde\lambda}^{n,+}(u_{\widetilde\lambda})u_{\widetilde\lambda})=\mathcal{E}^1_{\widetilde\lambda}.
\end{equation}
Notice also that $u_{\lambda_j}\in \mathcal{N}_{\lambda_j}$. It follows from the embedding $X\hookrightarrow L^q(\mathbb{R}^N)$ and \eqref{ed1} that 
$$\mathcal{E}^1_{\widetilde\lambda}\geq\limsup_{j\to\infty} E_{\lambda_j}(u_{\lambda_j})\geq \limsup_{j\to\infty}\left[\left(\frac{1}{2}-\frac{1}{2p}\right)\|u_{\lambda_j}\|^2-\lambda_jC\left(\frac{1}{q}-\frac{1}{2p}\right)\|u_{\lambda_j}\|^q\right]$$
holds for some $C>0$. Since $1 < q < 2$ the last inequality says that $(u_{\lambda_j})$ is bounded in $X$. The proof for the sequence $(v_{\lambda_j})$ follows using the same ideas.  This ends the proof. 	
\end{proof}

\begin{prop}\label{convergence}
Suppose $(Q)$ and $(V_1)-(V_2)$. Then, up to a subsequence, there exist $u_{\widetilde\lambda}, v_{\widetilde\lambda} \in X$ in such way that $u_{\lambda_j} \rightarrow u_{\widetilde\lambda}$ and $v_{\lambda_j} \rightarrow v_{\widetilde\lambda}$ in $X$ as $j \rightarrow \infty$.
\end{prop}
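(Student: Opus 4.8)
The plan is to extract subsequential weak limits from the bounds of Proposition \ref{Q}, pass to the strong topology on the subcritical and nonlocal terms, and then upgrade weak to strong convergence in $X$ by the fibering-map/minimality argument already used in Lemmas \ref{converg} and \ref{converg1}. First I would invoke Proposition \ref{Q}: since $(u_{\lambda_j})$ and $(v_{\lambda_j})$ are bounded in $X$, after passing to a subsequence we may assume $u_{\lambda_j}\rightharpoonup u_{\widetilde\lambda}$ and $v_{\lambda_j}\rightharpoonup v_{\widetilde\lambda}$ in $X$. By the compact embedding $X\hookrightarrow L^r(\mathbb{R}^N)$, $r\in[1,2^*)$, we get $u_{\lambda_j}\to u_{\widetilde\lambda}$, $v_{\lambda_j}\to v_{\widetilde\lambda}$ in $L^r(\mathbb{R}^N)$ and a.e.\ in $\mathbb{R}^N$, hence $\int_{\mathbb{R}^N}|u_{\lambda_j}|^q\,dx\to\int_{\mathbb{R}^N}|u_{\widetilde\lambda}|^q\,dx$ and likewise for $v$; and, arguing as for \eqref{limites} via Lemma \ref{lemaHardy} (using $p\in(2_\alpha,2^*_\alpha)$ and the $X$-boundedness), $\int_{\mathbb{R}^N}(I_\alpha*|u_{\lambda_j}|^p)|u_{\lambda_j}|^p\,dx\to\int_{\mathbb{R}^N}(I_\alpha*|u_{\widetilde\lambda}|^p)|u_{\widetilde\lambda}|^p\,dx$ and the analogous limit for $v$.

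Next I would check that the limits are nontrivial, using different arguments in the two cases. For $u_{\widetilde\lambda}$: since $\lambda_j\to\widetilde\lambda\in(0,\lambda_n)$ we may assume $\lambda_j\ge\widetilde\lambda/2>0$, while by \eqref{ed1} and Lemma \ref{sing-u} one has $\limsup_{j\to\infty}\mathcal{E}^1_{\lambda_j}\le\mathcal{E}^1_{\widetilde\lambda}<0$; writing $E_{\lambda_j}(u_{\lambda_j})$ with the Nehari identity as in Lemma \ref{coercive} gives, for $j$ large,
\[
0>\mathcal{E}^1_{\widetilde\lambda}\ge E_{\lambda_j}(u_{\lambda_j})\ge \Big(\frac12-\frac1{2p}\Big)\|u_{\lambda_j}\|^2-\lambda_j\Big(\frac1q-\frac1{2p}\Big)\int_{\mathbb{R}^N}|u_{\lambda_j}|^q\,dx,
\]
so $\int_{\mathbb{R}^N}|u_{\lambda_j}|^q\,dx$ is bounded below by a positive constant and therefore $\int_{\mathbb{R}^N}|u_{\widetilde\lambda}|^q\,dx>0$, i.e.\ $u_{\widetilde\lambda}\neq0$. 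For $v_{\widetilde\lambda}$: by Lemma \ref{E-} there is $c>0$, independent of $\lambda$, with $\|v_{\lambda_j}\|\ge c$; if $v_{\widetilde\lambda}=0$ then both $\int_{\mathbb{R}^N}(I_\alpha*|v_{\lambda_j}|^p)|v_{\lambda_j}|^p\,dx$ and $\int_{\mathbb{R}^N}|v_{\lambda_j}|^q\,dx$ tend to $0$, and since $v_{\lambda_j}\in\mathcal{N}_{\lambda_j}$ the relation $\|v_{\lambda_j}\|^2=\int_{\mathbb{R}^N}(I_\alpha*|v_{\lambda_j}|^p)|v_{\lambda_j}|^p\,dx+\lambda_j\int_{\mathbb{R}^N}|v_{\lambda_j}|^q\,dx$ forces $\|v_{\lambda_j}\|\to0$, contradicting $\|v_{\lambda_j}\|\ge c$; hence $v_{\widetilde\lambda}\neq0$.

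Finally I would promote weak to strong convergence by contradiction, mimicking Lemmas \ref{converg1} and \ref{converg}. Suppose $u_{\lambda_j}\not\to u_{\widetilde\lambda}$ in $X$; then $\|u_{\widetilde\lambda}\|<\liminf_{j\to\infty}\|u_{\lambda_j}\|$. Because $u_{\lambda_j}\in\mathcal{N}^+_{\lambda_j}$ we have $\frac{d}{dt}E_{\lambda_j}(tu_{\lambda_j})\le0$ for $t\in(0,1]$; combining the (strong) convergence of the Choquard and the $L^q$ terms with the weak lower semicontinuity of $\|\cdot\|$ yields, for each $t\in(0,1]$,
\[
\frac{d}{dt}E_{\widetilde\lambda}(tu_{\widetilde\lambda})<\liminf_{j\to\infty}\frac{d}{dt}E_{\lambda_j}(tu_{\lambda_j})\le0 .
\]
In particular $E'_{\widetilde\lambda}(u_{\widetilde\lambda})u_{\widetilde\lambda}<0$, so by Proposition \ref{compar} the unique projection satisfies $t^{n,+}_{\widetilde\lambda}(u_{\widetilde\lambda})>1$ and $t\mapsto E_{\widetilde\lambda}(tu_{\widetilde\lambda})$ is decreasing on $(0,t^{n,+}_{\widetilde\lambda}(u_{\widetilde\lambda}))$, whence $E_{\widetilde\lambda}(t^{n,+}_{\widetilde\lambda}(u_{\widetilde\lambda})u_{\widetilde\lambda})<E_{\widetilde\lambda}(u_{\widetilde\lambda})\le\liminf_{j\to\infty}E_{\lambda_j}(u_{\lambda_j})=\liminf_{j\to\infty}\mathcal{E}^1_{\lambda_j}\le\mathcal{E}^1_{\widetilde\lambda}$ by \eqref{ed1}; this contradicts $t^{n,+}_{\widetilde\lambda}(u_{\widetilde\lambda})u_{\widetilde\lambda}\in\mathcal{N}^+_{\widetilde\lambda}$ and the definition of $\mathcal{E}^1_{\widetilde\lambda}$, so $u_{\lambda_j}\to u_{\widetilde\lambda}$ in $X$. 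The argument for $v_{\lambda_j}$ is analogous: using $v_{\lambda_j}\in\mathcal{N}^-_{\lambda_j}$ and the weak lower semicontinuity of $v\mapsto E'_{\widetilde\lambda}(v)v$ one shows $t^{n,-}_{\widetilde\lambda}(v_{\widetilde\lambda})\in(t^{n,+}_{\lambda_j}(v_{\lambda_j}),1)$ for $j$ large (so $t\mapsto E_{\lambda_j}(tv_{\lambda_j})$ increases up to $t=1$), and then the same chain of strict inequalities together with the $\mathcal{N}^-$-analogue of \eqref{ed1} contradicts minimality of $\mathcal{E}^2_{\widetilde\lambda}$. As a by‑product the limits satisfy $u_{\widetilde\lambda}\in\mathcal{N}^+_{\widetilde\lambda}$, $v_{\widetilde\lambda}\in\mathcal{N}^-_{\widetilde\lambda}$ with $E_{\widetilde\lambda}(u_{\widetilde\lambda})=\lim_j\mathcal{E}^1_{\lambda_j}$ and $E_{\widetilde\lambda}(v_{\widetilde\lambda})=\lim_j\mathcal{E}^2_{\lambda_j}$.

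I expect the strong‑convergence step to be the main obstacle. It hinges on the uniqueness of the fibering projections and their continuous dependence on $\lambda$ (Propositions \ref{compar} and \ref{F}, together with their $\mathcal{N}^-$ counterparts), on the upper bound $\limsup_{j\to\infty}\mathcal{E}^i_{\lambda_j}\le\mathcal{E}^i_{\widetilde\lambda}$ for $i=1,2$ — which for $i=2$ first requires establishing the $\mathcal{N}^-$-version of Proposition \ref{F} — and on a careful interplay between the weak lower semicontinuity of the norm and the strong convergence of the nonlocal and concave terms, which is exactly what produces the strict inequalities closing the contradiction.
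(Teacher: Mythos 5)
Your proposal is correct in substance, but it follows a genuinely different route from the paper. The paper's proof is much shorter: it uses that $u_{\lambda_j}$ and $v_{\lambda_j}$ are \emph{free} critical points of $E_{\lambda_j}$ (Lemma \ref{criticalpoint}), tests the identity $E'_{\lambda_j}(u_{\lambda_j})\psi=0$ with $\psi=u_{\lambda_j}-u_{\widetilde\lambda}$, uses weak convergence to get $\langle u_{\widetilde\lambda},u_{\lambda_j}-u_{\widetilde\lambda}\rangle\to0$, and controls the Choquard and concave terms by the compact embedding together with Lemma \ref{lemaHardy} (estimates \eqref{ed2}--\eqref{ed3}); subtracting gives $\|u_{\lambda_j}-u_{\widetilde\lambda}\|^2\to0$ directly, with no contradiction argument, no nontriviality of the limits, and no fibering analysis. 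You instead never use the equation satisfied by $u_{\lambda_j}$, only the constrained-minimizer property, and re-run the scheme of Lemmas \ref{converg}--\ref{converg1} with moving parameter $\lambda_j$; this works, but it forces you to supply extra ingredients: nontriviality of $u_{\widetilde\lambda}$ and $v_{\widetilde\lambda}$ (your arguments for both are fine), the bound \eqref{ed1}, and its $\mathcal{N}^-$ analogue, which you correctly flag as requiring an $\mathcal{N}^-$ version of Proposition \ref{F} (straightforward via the Implicit Function Theorem with $(R_n)'(v_{\widetilde\lambda})v_{\widetilde\lambda}<0$; the paper states the analogue as \eqref{ed4} only in the subsequent proposition). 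Two small points to tighten: the chain $0>\mathcal{E}^1_{\widetilde\lambda}\ge E_{\lambda_j}(u_{\lambda_j})$ is not valid for every $j$ but only in the limit, so argue with $\limsup_j\mathcal{E}^1_{\lambda_j}\le\mathcal{E}^1_{\widetilde\lambda}<0$ and a fixed margin for large $j$; and the inference $t^{n,+}_{\widetilde\lambda}(u_{\widetilde\lambda})>1$ needs the strict negativity of $\frac{d}{dt}E_{\widetilde\lambda}(tu_{\widetilde\lambda})$ on all of $(0,1]$ (which you do have), not merely at $t=1$. What your longer route buys is that the conclusions $u_{\widetilde\lambda}\in\mathcal{N}^+_{\widetilde\lambda}$, $v_{\widetilde\lambda}\in\mathcal{N}^-_{\widetilde\lambda}$ and $\mathcal{E}^i_{\lambda_j}\to\mathcal{E}^i_{\widetilde\lambda}$ come out as by-products, whereas the paper establishes them separately afterwards; what the paper's route buys is brevity and independence from the nontriviality and projection-continuity machinery.
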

\begin{proof}
Recall that $(u_{\lambda_j})$ and $(v_{\lambda_j})$ are bounded sequences in $X$, see Proposition \ref{Q}. As a consequence, up to a subsequence, there exist $u_{\widetilde\lambda}, v_{\widetilde\lambda} \in X$ such that $u_{\lambda_j}\rightharpoonup u_{\widetilde\lambda}$ and $u_{\lambda_j}\rightharpoonup v_{\widetilde\lambda}$ in $X$.
Now we shall consider the proof for the sequence $(u_{\lambda_j})$. The proof for the sequence $(v_{\lambda_j})$ follows using the same ideas. Notice that $(u_{\lambda_j})$ is a sequence of solutions to the following minimization problem   
$$\mathcal{E}^1_{\lambda_j}=\inf_{w\in \mathcal{N}^+_{\lambda_j}}E_{\lambda_j}(w).$$ 
Furthermore, $u_{\lambda_j}$ is a free critical point for the functional $E_{\lambda_j}$ for each $j \in \mathbb{N}$, that is, we know that  $\langle E'_{\lambda_j}(u_{\lambda_j}), \psi \rangle= 0$ for every $\psi \in X$, see Proposition \ref{criticalpoint}. Hence $\langle E'_{\widetilde\lambda}(u_{\widetilde\lambda}), \psi \rangle= 0$ holds for every $\psi \in X$. In particular, we obtain 
\begin{equation}
\langle E'_{\lambda_j}(u_{\lambda_j}),u_{\lambda_j}- u_{\widetilde\lambda} \rangle= 0 \, \,\mbox{and} \,\, \langle E'_{\lambda_j}(u_{\widetilde\lambda}),u_{\lambda_j}- u_{\widetilde\lambda}\rangle= 0.
\end{equation}
As a consequence, we deduce the following estimates 
\begin{equation}\label{ed2}
|\langle u_{\lambda_j},u_{\lambda_j}- u_{\widetilde\lambda}\rangle|\leq \int_{\mathbb{R}^N} \left(\lambda_j|u_{\lambda_j}|^{q-1} +(I_\alpha*|u_{\lambda_j}|^p)|u_{\lambda_j}|^{p-1}\right) \left|u_{\lambda_j}- u\right|=o_j(1).
\end{equation}
and 
\begin{equation}\label{ed3}
|\langle u_{\widetilde\lambda},u_{\lambda_j}- u_{\widetilde\lambda}\rangle|\leq \int_{\mathbb{R}^N} \left(\widetilde\lambda |u_{\widetilde\lambda}|^{q-1} +(I_\alpha*|u_{\widetilde\lambda}|^p)|u_{\widetilde\lambda}|^{p-1}\right) \left|u_{\lambda_j}- u\right| = o_j(1).
\end{equation}
According to \eqref{ed2} and \eqref{ed3} we obtain that $u_{\lambda_j} \rightarrow u$ in $X$. This same argument can be applied for the sequence $(v_{\lambda_j})$.
This finishes the proof. 
\end{proof}

\begin{prop}
Suppose $(Q)$ and $(V_1)-(V_2)$. Then $ u_{\widetilde\lambda} \in \mathcal{N}^+_{\widetilde\lambda}$ and $v_{\widetilde\lambda} \in \mathcal{N}^-_{\widetilde\lambda}$ where $u_{\widetilde\lambda}$ and $v_{\widetilde\lambda}$ was obtained by Proposition \ref{convergence}.
\end{prop}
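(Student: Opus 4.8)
The idea is simply to pass to the limit in the defining relations for $\mathcal{N}_{\lambda_j}^+$ and $\mathcal{N}_{\lambda_j}^-$, using the strong convergence $u_{\lambda_j}\to u_{\widetilde\lambda}$, $v_{\lambda_j}\to v_{\widetilde\lambda}$ in $X$ obtained in Proposition \ref{convergence}, together with the continuity under strong convergence of the three basic functionals $u\mapsto\|u\|^2$, $u\mapsto\int_{\mathbb{R}^N}(I_\alpha*|u|^p)|u|^p\,dx$ (continuity here follows from Lemma \ref{lemaHardy} and Remark \ref{rmHardy} since $p\in(2_\alpha,2^*_\alpha)$) and $u\mapsto\int_{\mathbb{R}^N}|u|^q\,dx$, and the fact $\widetilde\lambda\in(0,\lambda_n)$.

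\textbf{Step 1: $u_{\widetilde\lambda}\neq 0$ and $v_{\widetilde\lambda}\neq 0$.} For $u_{\widetilde\lambda}$, combine the strong convergence with \eqref{ed1} and Lemma \ref{sing-u}: $E_{\widetilde\lambda}(u_{\widetilde\lambda})=\lim_{j\to\infty}E_{\lambda_j}(u_{\lambda_j})=\lim_{j\to\infty}\mathcal{E}^1_{\lambda_j}\le\mathcal{E}^1_{\widetilde\lambda}<0$, so $u_{\widetilde\lambda}$ cannot be the zero function. For $v_{\widetilde\lambda}$, recall that $v_{\lambda_j}\in\mathcal{N}^-_{\lambda_j}$ with $\lambda_j\in(0,\lambda_n)$, so by Lemma \ref{E-} there is a constant $c=c(N,p,q)>0$, independent of $j$, with $\|v_{\lambda_j}\|\ge c$; passing to the limit gives $\|v_{\widetilde\lambda}\|\ge c>0$.

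\textbf{Step 2: $u_{\widetilde\lambda},v_{\widetilde\lambda}\in\mathcal{N}_{\widetilde\lambda}$.} Since $u_{\lambda_j}\in\mathcal{N}_{\lambda_j}$ we have $\|u_{\lambda_j}\|^2-\int_{\mathbb{R}^N}(I_\alpha*|u_{\lambda_j}|^p)|u_{\lambda_j}|^p\,dx=\lambda_j\int_{\mathbb{R}^N}|u_{\lambda_j}|^q\,dx$; letting $j\to\infty$, using $\lambda_j\to\widetilde\lambda$ and the continuity recalled above, we obtain $\|u_{\widetilde\lambda}\|^2-\int_{\mathbb{R}^N}(I_\alpha*|u_{\widetilde\lambda}|^p)|u_{\widetilde\lambda}|^p\,dx=\widetilde\lambda\int_{\mathbb{R}^N}|u_{\widetilde\lambda}|^q\,dx$, which together with Step 1 gives $u_{\widetilde\lambda}\in\mathcal{N}_{\widetilde\lambda}$. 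The same computation with $v_{\lambda_j}$ yields $v_{\widetilde\lambda}\in\mathcal{N}_{\widetilde\lambda}$.

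\textbf{Step 3: the sign of $E''_{\widetilde\lambda}$.} Passing the expression \eqref{segunda} to the limit along $u_{\lambda_j}\in\mathcal{N}^+_{\lambda_j}$ gives $E''_{\widetilde\lambda}(u_{\widetilde\lambda})(u_{\widetilde\lambda},u_{\widetilde\lambda})=\lim_{j\to\infty}E''_{\lambda_j}(u_{\lambda_j})(u_{\lambda_j},u_{\lambda_j})\ge 0$, and likewise $E''_{\widetilde\lambda}(v_{\widetilde\lambda})(v_{\widetilde\lambda},v_{\widetilde\lambda})\le 0$ along $v_{\lambda_j}\in\mathcal{N}^-_{\lambda_j}$. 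The strict inequalities — the only genuinely delicate point — follow because $\widetilde\lambda\in(0,\lambda_n)$, for which $\mathcal{N}^0_{\widetilde\lambda}=\emptyset$ (see the Remark after Proposition \ref{compar}): since by Step 2 both $u_{\widetilde\lambda}$ and $v_{\widetilde\lambda}$ lie in $\mathcal{N}_{\widetilde\lambda}=\mathcal{N}^+_{\widetilde\lambda}\cup\mathcal{N}^-_{\widetilde\lambda}$, neither second derivative can vanish, so $E''_{\widetilde\lambda}(u_{\widetilde\lambda})(u_{\widetilde\lambda},u_{\widetilde\lambda})>0$ and $E''_{\widetilde\lambda}(v_{\widetilde\lambda})(v_{\widetilde\lambda},v_{\widetilde\lambda})<0$, i.e. $u_{\widetilde\lambda}\in\mathcal{N}^+_{\widetilde\lambda}$ and $v_{\widetilde\lambda}\in\mathcal{N}^-_{\widetilde\lambda}$. (Alternatively one can reach the same conclusion through the uniqueness and $C^1$-dependence of the projections $t^{n,\pm}_\lambda$ of Proposition \ref{compar}, which prevents the limit functions from landing on the boundary stratum $\mathcal{N}^0_{\widetilde\lambda}$; in either approach it is precisely the emptiness of $\mathcal{N}^0_{\widetilde\lambda}$ on $(0,\lambda_n)$ that closes the argument.)
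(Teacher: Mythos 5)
Your proposal is correct and follows essentially the same route as the paper: nontriviality of $u_{\widetilde\lambda}$ from the negative energy bound via \eqref{ed1} and Lemma \ref{sing-u}, nontriviality of $v_{\widetilde\lambda}$ from the uniform lower bound on $\|v_{\lambda_j}\|$, passage to the limit in the Nehari/second-derivative relations (the paper phrases these equivalently through $R_n$ and $R_n'$, cf. Remark \ref{rmk1} and Proposition \ref{der-Rn}), and exclusion of the degenerate case via $\mathcal{N}^0_{\widetilde\lambda}=\emptyset$ for $\widetilde\lambda\in(0,\lambda_n)$. If anything, you make explicit the final step (emptiness of $\mathcal{N}^0_{\widetilde\lambda}$ forcing the strict signs) that the paper leaves largely implicit.
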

\begin{proof}
	Recall that $u_{\lambda_j} \rightarrow u_{\widetilde\lambda}$ and $v_{\lambda_j} \rightarrow v_{\widetilde\lambda}$ in $X$ for some $u_{\widetilde\lambda}, v_{\widetilde\lambda} \in X$, see Proposition \ref{convergence}. Under these conditions, by using \eqref{ed1}, we infer that 
	\begin{equation} \displaystyle\lim_{j\to\infty}\mathcal{E}^1_{\lambda_j}=E_{\widetilde\lambda}(u_{\widetilde\lambda})\leq \mathcal{E}^1_{\widetilde\lambda}.
	\end{equation} 
	
	Now we shall split the proof into parts. In the first one we consider the sequence $(u_{\lambda_j})$. In particular, we observe that  
		$$R_n(u_{\lambda_j})=\lambda_j\,\,\mbox{and}\,\, (R_n)'(u_{\lambda_j})(u_{\lambda_j})>0.$$
Taking the limit in the last expression we deduce that 
		$$R_n(u_{\widetilde\lambda})=\widetilde\lambda\,\,\mbox{and}\,\, (R_n)'(u_{\widetilde\lambda})(u_{\widetilde\lambda})\geq0.$$
		Now we claim that $u_{\widetilde\lambda} \neq 0$. In fact, by using \eqref{ed1} and the fact that $E_{\widetilde\lambda}$ is continuous, we infer that 
		$$\displaystyle\lim_{j\to\infty}\mathcal{E}^1_{\lambda_j}=E_{\widetilde\lambda}({u_{\widetilde\lambda}})\leq \mathcal{E}^1_{\widetilde\lambda}<0.$$
		The last assertion ensures that $u_{\widetilde\lambda} \neq 0$ proving the desired claim. Using one more time the strong convergence and due the fact that $E'_{\lambda_j}(u_{\lambda_j}) w = 0, w \in X$ we obtain that $E'_{\widetilde\lambda}(u_{\widetilde\lambda}) w = 0, w \in X$. In other words, $u$ is critical point for $E_{\widetilde\lambda}$ satisfying $u_{\widetilde\lambda} \neq 0$. The last assertion implies that  $u \in \mathcal{N}^+_{\widetilde\lambda}$.

		Now we shall consider the second part involving the sequence $(v_{\lambda_j})$. It is important to point out that the similar argument used in \eqref{ed1} still $\mathcal{E}_\lambda^2$. In this case we obtain that
		\begin{equation}\label{ed4}
		\limsup_{j\to\infty}\mathcal{E}^2_{\lambda_j}\leq\limsup_{j\to\infty} {E}_{\lambda_j}(t_{\lambda_j}^{n,+}(u_{\widetilde\lambda})u_{\widetilde\lambda})={E}_{\widetilde\lambda}(t_{\widetilde\lambda}^{n,+}(u_{\widetilde\lambda})u_{\widetilde\lambda})=\mathcal{E}^2_{\widetilde\lambda}.
		\end{equation}
		Now, by using \eqref{tn} and \eqref{P}, there exists $C > 0$ such that $t_n(u) \geq C \|u\|^{(2 -2p)/(2p -2)}$ holds true for any $u \in X \setminus \{0\}$.
		Furthermore, by using Remark 1.1 and  Proposition \ref{compar}, we mention also that
		\begin{equation}
		C\|v_{\lambda_j}\|^{\frac{2-2p}{2p-2}}\leq t_n(v_{\lambda_j})\leq t_{\lambda_j}^{n,-}(v_{\lambda_j})=1\nonumber.
		\end{equation}
		The last inequality implies that 
		\begin{equation}\label{est-N-}
			\|v_{\lambda_j}\| \geq C > 0
		\end{equation}
		holds true for any $j \in \mathbb{N}$ with $C > 0$. Taking the limit in the expression just above we obtain that $C\leq \|v_{\widetilde\lambda}\|$ is satisfied. In particular, we see also that $v_{\widetilde\lambda} \neq 0$ is now verified.
		This ends the proof. 	
	\end{proof}	
	\begin{prop}\label{monot}
	Suppose $(Q)$ and $(V_1)-(V_2)$. Then we obtain that $\lambda\mapsto\mathcal{E}_\lambda^1$ and $\lambda\mapsto\mathcal{E}_\lambda^2$ are  decreasing functions in $(0,\lambda_n)$.
	\end{prop}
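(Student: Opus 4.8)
The plan is to fix $0<\lambda_1<\lambda_2<\lambda_n$ and prove $\mathcal{E}_{\lambda_2}^i<\mathcal{E}_{\lambda_1}^i$ for $i=1,2$, using the ground and bound state minimizers $u_{\lambda_1}\in\mathcal{N}_{\lambda_1}^+$ and $v_{\lambda_1}\in\mathcal{N}_{\lambda_1}^-$ produced in Proposition \ref{exx} as comparison directions for the parameter $\lambda_2$, together with the uniqueness of the fibering projections from Proposition \ref{compar}.

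I would first isolate two ingredients. (a) For every $w\in X\setminus\{0\}$ and $t>0$,
\[
E_{\lambda_2}(tw)=E_{\lambda_1}(tw)-\frac{\lambda_2-\lambda_1}{q}\int_{\mathbb{R}^N}|tw|^q\,dx<E_{\lambda_1}(tw),
\]
since $\int_{\mathbb{R}^N}|w|^q\,dx>0$. (b) The shape of $\phi_w(t)=E_\lambda(tw)$ coming from the analysis of $Q_n$ preceding Proposition \ref{compar} together with Remark \ref{rmk1}: $\phi_w$ is decreasing on $(0,t_\lambda^{n,+}(w))$, increasing on $(t_\lambda^{n,+}(w),t_\lambda^{n,-}(w))$ and decreasing on $(t_\lambda^{n,-}(w),\infty)$, with $0<t_\lambda^{n,+}(w)<t_n(w)<t_\lambda^{n,-}(w)$. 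Consequently $E_\lambda(t_\lambda^{n,+}(w)w)=\min_{t\in[0,t_n(w)]}E_\lambda(tw)$ and $E_\lambda(t_\lambda^{n,-}(w)w)=\max_{t\ge t_n(w)}E_\lambda(tw)$ for every $\lambda\in(0,\lambda_n)$.

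For $\mathcal{E}^1$: since $u_{\lambda_1}\in\mathcal{N}_{\lambda_1}^+$ one has $t_{\lambda_1}^{n,+}(u_{\lambda_1})=1<t_n(u_{\lambda_1})$ and $E_{\lambda_1}(u_{\lambda_1})=\mathcal{E}_{\lambda_1}^1$; moreover $t_{\lambda_2}^{n,+}(u_{\lambda_1})u_{\lambda_1}\in\mathcal{N}_{\lambda_2}^+$ by Proposition \ref{compar}, so $\mathcal{E}_{\lambda_2}^1\le E_{\lambda_2}(t_{\lambda_2}^{n,+}(u_{\lambda_1})u_{\lambda_1})$. Since $t_{\lambda_2}^{n,+}(u_{\lambda_1})$ realizes $\min_{t\in[0,t_n(u_{\lambda_1})]}E_{\lambda_2}(tu_{\lambda_1})$ and $1$ lies in that interval, ingredient (a) gives
\[
\mathcal{E}_{\lambda_2}^1\le E_{\lambda_2}(t_{\lambda_2}^{n,+}(u_{\lambda_1})u_{\lambda_1})\le E_{\lambda_2}(u_{\lambda_1})<E_{\lambda_1}(u_{\lambda_1})=\mathcal{E}_{\lambda_1}^1.
\]
For $\mathcal{E}^2$: since $v_{\lambda_1}\in\mathcal{N}_{\lambda_1}^-$ one has $t_{\lambda_1}^{n,-}(v_{\lambda_1})=1$ and $E_{\lambda_1}(v_{\lambda_1})=\mathcal{E}_{\lambda_1}^2$, while $t_{\lambda_2}^{n,-}(v_{\lambda_1})v_{\lambda_1}\in\mathcal{N}_{\lambda_2}^-$ and $t_{\lambda_2}^{n,-}(v_{\lambda_1})>t_n(v_{\lambda_1})$. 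Using $\mathcal{E}_{\lambda_2}^2\le E_{\lambda_2}(t_{\lambda_2}^{n,-}(v_{\lambda_1})v_{\lambda_1})$, then (a) at $t=t_{\lambda_2}^{n,-}(v_{\lambda_1})$, and finally $E_{\lambda_1}(t_{\lambda_2}^{n,-}(v_{\lambda_1})v_{\lambda_1})\le\max_{t\ge t_n(v_{\lambda_1})}E_{\lambda_1}(tv_{\lambda_1})=E_{\lambda_1}(v_{\lambda_1})$, I obtain
\[
\mathcal{E}_{\lambda_2}^2\le E_{\lambda_2}(t_{\lambda_2}^{n,-}(v_{\lambda_1})v_{\lambda_1})<E_{\lambda_1}(t_{\lambda_2}^{n,-}(v_{\lambda_1})v_{\lambda_1})\le E_{\lambda_1}(v_{\lambda_1})=\mathcal{E}_{\lambda_1}^2.
\]
Since $\lambda_1<\lambda_2$ were arbitrary in $(0,\lambda_n)$, both $\lambda\mapsto\mathcal{E}_\lambda^1$ and $\lambda\mapsto\mathcal{E}_\lambda^2$ are strictly decreasing, hence in particular decreasing.

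I do not expect a substantial obstacle: the entire argument is a comparison along fibers. The one point demanding care is the bookkeeping of inequalities, which differs for $\mathcal{N}^+$ and $\mathcal{N}^-$ because $E_\lambda(u_\lambda)$ is a minimum along the fiber whereas $E_\lambda(v_\lambda)$ is a maximum along the relevant part of the fiber. In the $+$ case one must evaluate the $\lambda_2$-minimization at the $\lambda_1$-optimal scaling $t_{\lambda_1}^{n,+}(u_{\lambda_1})=1$; in the $-$ case one must keep the $\lambda_2$-optimal scaling $t_{\lambda_2}^{n,-}(v_{\lambda_1})$ and only afterwards invoke the $\lambda_1$-maximality at $t_{\lambda_1}^{n,-}(v_{\lambda_1})=1$; reversing the order of these steps breaks the chain. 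One also relies on Proposition \ref{compar} to guarantee $0<t_{\lambda_2}^{n,+}(w)<t_n(w)<t_{\lambda_2}^{n,-}(w)$ and the strict positivity of all these scalings, so that the strict inequality in (a) is available.
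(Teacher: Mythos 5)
Your proof is correct and follows essentially the same route as the paper: project the $\lambda_1$-minimizers $u_{\lambda_1}$, $v_{\lambda_1}$ onto $\mathcal{N}_{\lambda_2}^{+}$ and $\mathcal{N}_{\lambda_2}^{-}$ via Proposition \ref{compar} and combine the fiber-map shape with the strict inequality $E_{\lambda_2}(tw)<E_{\lambda_1}(tw)$. The only cosmetic difference is that in the $\mathcal{N}^{-}$ case you invoke maximality of $t_{\lambda_1}^{n,-}(v_{\lambda_1})=1$ over $[t_n(v_{\lambda_1}),\infty)$, whereas the paper first shows $t_{\lambda_2}^{n,-}(v_{\lambda_1})<1$ and then uses monotonicity of $t\mapsto E_{\lambda_1}(tv_{\lambda_1})$ on $[t_{\lambda_1}^{n,+}(v_{\lambda_1}),1]$; both are the same comparison along the fiber.
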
	
	\begin{proof}
		Let $\lambda_1,\lambda_2\in(0,\lambda_n)$ be fixed numbers satisfying $\lambda_1<\lambda_2$. Now, we observe that $t_{\lambda_2}^{n,-}(v_{\lambda_1})<t_{\lambda_1}^{n,-}(v_{\lambda_1})=1$, see Figure \ref{mono1}. In fact, we mention that  $$\lambda_1=R_n(t_{\lambda_1}^{n,-}(v_{\lambda_1})v_{\lambda_1})<R_n(t_{\lambda_2}^{n,-}(v_{\lambda_1})v_{\lambda_1})=\lambda_2.$$
		Since $t_{\lambda_2}^{n,-}(v_{\lambda_1}),t_{\lambda_1}^{n,-}(v_{\lambda_1})\in (t_{n}(v_{\lambda_1}),+\infty)$ and $t\mapsto R_n(tv_{\lambda_1})$ is a decreasing  function in $(0,\lambda_n)$ we obtain that $t_{\lambda_2}^{n,-}(v_{\lambda_1})<t_{\lambda_1}^{n,-}(v_{\lambda_1})=1$. 
		
		Now we claim that $t\mapsto E_{\lambda_1}(tv_{\lambda_1})$ is increasing in $[t_{\lambda_1}^{n,+}(v_{\lambda_1}),t_{\lambda_1}^{n,-}(v_{\lambda_1})]$, see Figure \ref{mono2}. The proof for this claim follows using that $t\mapsto E_{\lambda_1}(tv_{\lambda})$ has exactly two critical points given by  $t_{\lambda_1}^{n,+}(v_{\lambda_1})$ and $t_{\lambda_1}^{n,-}(v_{\lambda_1})$. Furthermore, we know that $t_{\lambda_1}^{n,+}(v_{\lambda_1})$  is a local maximum point an $t_{\lambda_1}^{n,-}(v_{\lambda_1})$ is a local minimum point for the fibering function $t \mapsto \phi(t) = E_\lambda(tu)$. In particular, we observe that 
		$E_{\lambda_1}(t_{\lambda_2}^{n,-}(v_{\lambda_1})v_{\lambda_1})<E_{\lambda_1}(t_{\lambda_1}^{n,-}(v_{\lambda_1})v_{\lambda_1})=E_{\lambda_1}(v_{\lambda_1})$. Indeed, by using the fact that  $t_{\lambda_2}^{n,-}(v_{\lambda_1}),t_{\lambda_1}^{n,-}(v_{\lambda_1})\in [t_{\lambda_1}^{n,+}(v_{\lambda_1}),t_{\lambda_1}^{n,-}(v_{\lambda_1})]$ we obtain the desired result. Moreover, using the definition of $\mathcal{E}_{\lambda_1}^2$ together with the fact that $t_{\lambda_2}^{n,-}(v_{\lambda_1}) v_{\lambda_1}$ belongs to $\mathcal{N}_\lambda^-$, we deduce the following estimates 
		\begin{eqnarray}
			\mathcal{E}_{\lambda_2}^2&=&E_{\lambda_2}(v_{\lambda_2})\leq E_{\lambda_2}(t_{\lambda_2}^{n,-}(v_{\lambda_1}) v_{\lambda_1})\nonumber\\
			&=& E_{\lambda_1}(t_{\lambda_2}^{n,-}(v_{\lambda_1}) v_{\lambda_1})-(\lambda_2-\lambda_1)\|t_{\lambda_2}^{n,-}(v_{\lambda_1}) v_{\lambda_1}\|_q^q\nonumber\\
			&<& E_{\lambda_1}(t_{\lambda_2}^{n,-}(v_{\lambda_1}) v_{\lambda_1})<E_{\lambda_1}( v_{\lambda_1})=\mathcal{E}_{\lambda_1}^2.\nonumber
		\end{eqnarray} 
		\begin{figure}[!ht]
			\begin{minipage}[h]{0.49\linewidth}
				\center{\includegraphics[scale=0.7]{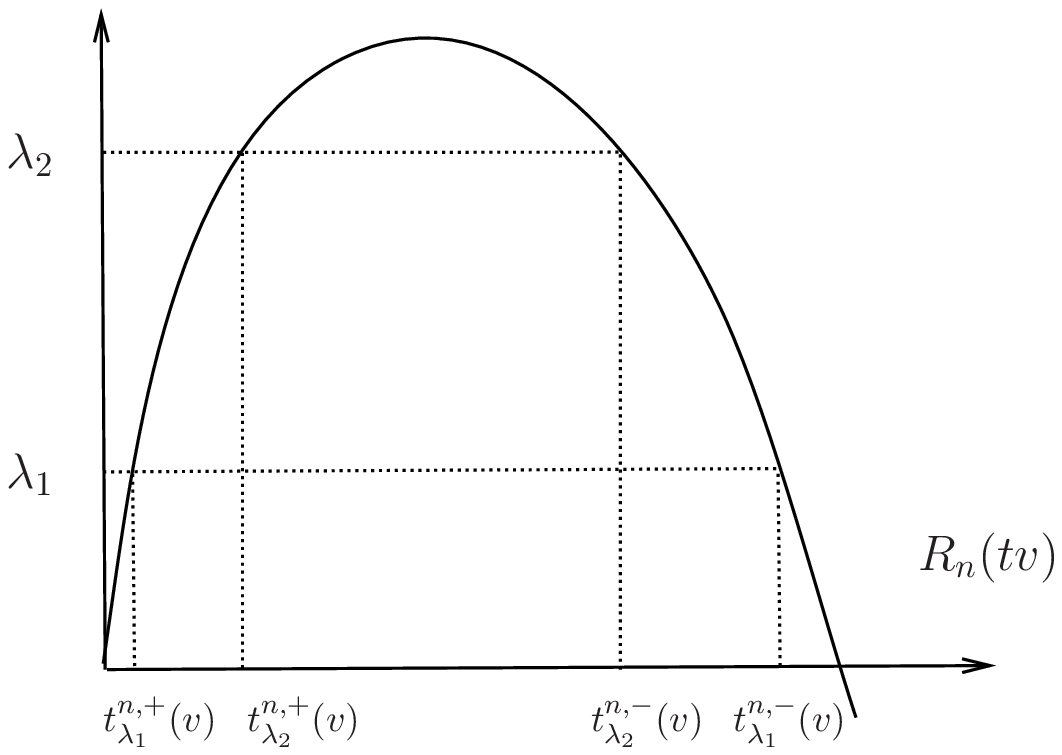}}
				\caption{$\lambda_1<\lambda_2$}
				\label{mono1}
			\end{minipage}
			\begin{minipage}[h]{0.49\linewidth}
				\center{\includegraphics[scale=0.7]{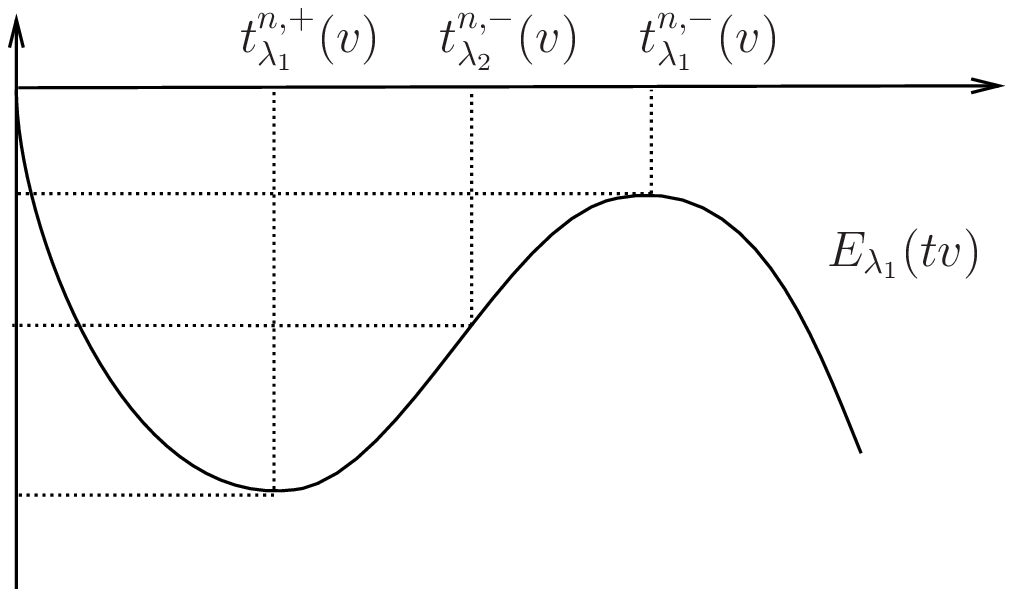}}
				\caption{$\lambda_1<\lambda_2$}
				\label{mono2}
			\end{minipage}
		\end{figure}

	Using the same ideas discussed just above we obtain some analogous properties for the Nehari manifold $\mathcal{N}_\lambda^+$. Namely, we can show the following properties:
		\begin{itemize}
			\item[(i)] $1=t_{\lambda_1}^{n,+}(u_{\lambda_1})<t_{\lambda_2}^{n,+}(u_{\lambda_1})$; 
			\item[(ii)] $t\mapsto E_{\lambda_2}(tu_{\lambda_1})$ is decreasing in $[0,t_{\lambda_2}^{n,+}(u_{\lambda_1})]$; 
			\item[(iii)] $E_{\lambda_2}(t_{\lambda_2}^{n,+}(u_{\lambda_1})u_{\lambda_1})<E_{\lambda_2}(t_{\lambda_1}^{n,+}(u_{\lambda_1})u_{\lambda_1})=E_{\lambda_2}(u_{\lambda_1})$;
			\item[(iv)] $\mathcal{E}_{\lambda_2}^1<\mathcal{E}_{\lambda_1}^1, \lambda_1 < \lambda_2$.
		\end{itemize}
The proof of items $(i)-(iv)$ follows arguing as was done just above. This ends the proof. 
	\end{proof}

\nd{\bf The proof of Theorem \ref{theorem2} completed.} Firstly, by using Proposition \ref{monot}, we know that the functions $\lambda\mapsto\mathcal{E}_\lambda^1$ and $\lambda\mapsto\mathcal{E}_\lambda^2$ are decreasing. Furthermore, we also mention that  
	\begin{eqnarray}\label{compa}
		\mathcal{E}_\lambda^1=E_\lambda(u_\lambda)\leq E_\lambda(t_\lambda^{n,+}(v_\lambda) v_\lambda)<E_\lambda(v_\lambda)=\mathcal{E}_\lambda^2.
	\end{eqnarray}
	These facts ensure that the item $i)$ is now verified.


Now we shall prove the item $ii)$. It follows from the previous item and the definition of $\mathcal{E}^1_{\widetilde\lambda}$ that 
	$E_{\widetilde{\lambda}}(u)=\mathcal{E}^1_{\widetilde\lambda}:=E_{\widetilde{\lambda}}(u_{\widetilde\lambda})$ holds true for some $u \in \mathcal{N}_{\widetilde\lambda}^+$.
	The last assertion implies that $u= u_{\widetilde\lambda}$. Here was used the fact that  $u,u_{\widetilde\lambda} \in \mathcal{N}_{\widetilde{\lambda}}^+$, see Proposition \ref{compar}. In particular, we obtain that $u_{\lambda_j} \rightarrow u_{\widetilde\lambda}$ as $j \rightarrow \infty$. The same argument can be applied for the sequence $v_{\lambda_j}$ proving that $v_{\lambda_j} \rightarrow v_{\widetilde\lambda}$ as $j \rightarrow \infty$. This ends the proof of item $ii)$.
	
	Now we shall prove that $\lambda\mapsto\mathcal{E}_\lambda^{i},~\lambda\in(0,\lambda_n),~i=1,2,$ is continuous. Indeed, define the function $E:(0, + \infty)\times X\rightarrow \mathbb{R}$ given by $E(\lambda,u):=E_\lambda(u)$. It is easy ensure that $E$ is a continuous function. In particular, given any sequence $\lambda_j\subset (0,\lambda_n)$ such that $\lambda_j\to \lambda\in(0,\lambda_n)$ we infer that $u_{\lambda_j}\to u_\lambda$, see Proposition \ref{convergence}. Therefore, we obtain 
		$\mathcal{E}_{\lambda_j}^1=E(\lambda_j,u_{\lambda_j})\to E(\lambda,u_{\lambda})=\mathcal{E}_{\lambda}^1.$
		The last statement implies that $\lambda\mapsto\mathcal{E}_\lambda^1$ is a continuous function for each $\lambda\in(0,\lambda_n)$. The same argument can be applied for the function $\mathcal{E}^2_{\widetilde\lambda}$ showing that item $iii)$ is now verified. This finishes the proof.  
	
\subsection{Proof of Theorem \ref{theorem3}}
In this subsection we shall prove Theorem \ref{theorem3}. Firstly, we consider some auxiliary results. In the first one we consider the following result:  
\begin{lem}\label{vanish}
	Suppose $(Q)$ and $(V_1)-(V_2)$. Then $\mathcal{N}_0^+$ and $\mathcal{N}_0^0$ are empty sets.
\end{lem}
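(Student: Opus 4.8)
The plan is to reduce the statement to a one-line sign computation based on formula \eqref{segunda}. First I would observe that for $\lambda=0$ the defining relation of the Nehari set collapses, since the concave term $\lambda\int|u|^qdx$ is absent, to
$$\mathcal{N}_0=\left\{u\in X\setminus\{0\}:\|u\|^2=\Int\left(I_\alpha*|u|^p\right)|u|^pdx\right\}.$$
Next, for an arbitrary $u\in\mathcal{N}_0$ I would substitute this identity into \eqref{segunda} with $\lambda=0$, obtaining
$$E''_0(u)(u,u)=\|u\|^2-(2p-1)\Int\left(I_\alpha*|u|^p\right)|u|^pdx=\bigl(1-(2p-1)\bigr)\|u\|^2=-2(p-1)\|u\|^2.$$
By hypothesis $(Q)$ one has $p>2_\alpha=(N+\alpha)/N>1$, and $u\neq 0$ forces $\|u\|>0$; hence $E''_0(u)(u,u)<0$ for every $u\in\mathcal{N}_0$. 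In other words $\mathcal{N}_0=\mathcal{N}_0^-$, which is precisely the assertion that $\mathcal{N}_0^+=\mathcal{N}_0^0=\emptyset$.

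An equivalent route, closer to the fibering-map language used earlier in the paper, is to note that for $\lambda=0$ the map $\phi(t)=E_0(tu)=\tfrac{t^2}{2}\|u\|^2-\tfrac{t^{2p}}{2p}\Int(I_\alpha*|u|^p)|u|^pdx$ has, for each $u\in X\setminus\{0\}$, a unique critical point $t_0(u)>0$ which is a strict global maximum (because the exponent $2p$ is strictly larger than $2$), so that $\phi''(t_0(u))<0$. Since $tu\in\mathcal{N}_0$ if and only if $\phi'(t)=0$, i.e. $t=t_0(u)$, the whole Nehari set is $\{t_0(u)u:u\in X\setminus\{0\}\}$ and, via Proposition \ref{der-Rn} applied with $\lambda=0$, carries only points of ``$\mathcal{N}^-$-type''.

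I do not expect any genuine obstacle here: the only points requiring a small amount of care are that the $\lambda=0$ constraint is exactly $\|u\|^2=\int(I_\alpha*|u|^p)|u|^pdx$ and that $p>1$ (which gives the strict negativity), and both are immediate consequences of $(Q)$. This lemma will then serve to locate the limiting solution $v_0$ on $\mathcal{N}_0^-$ in the proof of Theorem \ref{theorem3}.
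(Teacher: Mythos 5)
Your proof is correct and rests on the same ingredients as the paper's: the second-derivative formula \eqref{segunda} combined with the $\lambda=0$ Nehari constraint $\|u\|^2=\int_{\mathbb{R}^N}(I_\alpha*|u|^p)|u|^p\,dx$; the paper argues by contradiction, solving the resulting $2\times 2$ system to force $u\equiv 0$, while your direct substitution yields $E''_0(u)(u,u)=-2(p-1)\|u\|^2<0$ for every $u\in\mathcal{N}_0$, which is the same algebra and even handles $\mathcal{N}_0^+$ and $\mathcal{N}_0^0$ simultaneously.
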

\begin{proof}
	The proof follows arguing by contradiction. Assume that there exists $u_0\in \mathcal{N}_0^0$. Hence the function $u_0$ satisfies the following identities 
	$$E'_0(u_0)(u_0)=0 \quad \mbox{and} \quad E''_0(u_0)(u_0,u_0)=0, u_0 \neq 0.$$
Now, solving the system just above in the variables $\|u_0\|^2$ and {$\int_{\Omega}(I_\alpha*|u_0|^p)|u_0|^pdx$} and taking into account \eqref{segunda}, we obtain the following identities  
		$$\|u_0\|^2=\displaystyle\int_{\Omega}(I_\alpha*|u_0|^p)|u_0|^pdx=0.$$
As a consequence, $u_0\equiv 0$ which is absurd. The last assertion implies that $\mathcal{N}_0^0=\emptyset$. The proof for $\mathcal{N}_0^+$ follows the same ideas discussed just above. This ends the proof.	
\end{proof}	

\begin{prop}\label{convforte}
	Suppose $(Q)$ and $(V_1)-(V_2)$. Let $(\lambda_j)\subset (0,\lambda_n)$ be a sequence such that $\lambda_j\to 0$. Then $u_{\lambda_j} \to 0$ and $v_{\lambda_j} \to v_0$ as $j\to\infty$ where $v_0$ is a weak solution for the Problem \eqref{eq1} with $\lambda=0$.
\end{prop}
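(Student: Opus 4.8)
The plan is to treat the two convergences separately: the one for $u_{\lambda_j}$ is elementary, while the one for $v_{\lambda_j}$ requires a compactness argument together with the uniform lower bound already available from Lemma \ref{E-}.

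For $u_{\lambda_j}\to0$, I would start from $u_{\lambda_j}\in\mathcal{N}_{\lambda_j}$ together with $\mathcal{E}^1_{\lambda_j}=E_{\lambda_j}(u_{\lambda_j})<0$, which holds by Lemma \ref{sing-u}. Eliminating the nonlocal term via the Nehari identity, exactly as in the proof of Lemma \ref{coercive}, gives
\begin{equation*}
E_{\lambda_j}(u_{\lambda_j})=\frac{p-1}{2p}\|u_{\lambda_j}\|^2-\lambda_j\,\frac{2p-q}{2pq}\,\|u_{\lambda_j}\|_q^q .
\end{equation*}
Since $E_{\lambda_j}(u_{\lambda_j})<0$ and $\|u_{\lambda_j}\|_q^q\le C\|u_{\lambda_j}\|^q$ by the embedding $X\hookrightarrow L^q(\mathbb{R}^N)$, this forces $\|u_{\lambda_j}\|^{2-q}\le C\lambda_j$ with $C$ independent of $j$; as $1<q<2$ and $\lambda_j\to0$ we conclude $u_{\lambda_j}\to0$ in $X$ (and hence $\mathcal{E}^1_{\lambda_j}\to0$).

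For the sequence $(v_{\lambda_j})$ I would first prove boundedness in $X$. Fixing any $w\in X\setminus\{0\}$, the comparison $\mathcal{E}^2_{\lambda_j}\le E_{\lambda_j}(t_{\lambda_j}^{n,-}(w)w)$ together with $t_{\lambda_j}^{n,-}(w)\to t^{*}(w)$, where $t^{*}(w)>0$ is the unique root with $t^{2-q}\|w\|^2-t^{2p-q}\int_{\mathbb{R}^N}(I_\alpha*|w|^p)|w|^p dx=0$ lying beyond $t_n(w)$ (the larger root of $Q_n(\cdot)=\lambda_j$ as $\lambda_j\downarrow0$), yields $\limsup_{j}\mathcal{E}^2_{\lambda_j}\le E_0(t^{*}(w)w)<\infty$. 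Combining this with the identity above applied to $v_{\lambda_j}$, i.e. $E_{\lambda_j}(v_{\lambda_j})\ge\frac{p-1}{2p}\|v_{\lambda_j}\|^2-C\lambda_j\|v_{\lambda_j}\|^q$ and $1<q<2$, $\lambda_j\to0$, rules out $\|v_{\lambda_j}\|\to\infty$, so $(v_{\lambda_j})$ is bounded. Passing to a subsequence, $v_{\lambda_j}\rightharpoonup v_0$ in $X$, and the compact embeddings $X\hookrightarrow L^r(\mathbb{R}^N)$, $r\in[1,2^*)$, give $v_{\lambda_j}\to v_0$ in $L^r$ and a.e.; since $2_\alpha<p<2^*_\alpha$ makes $\tfrac{2Np}{N+\alpha}\in(2,2^*)$, Lemma \ref{lemaHardy} yields $\int_{\mathbb{R}^N}(I_\alpha*|v_{\lambda_j}|^p)|v_{\lambda_j}|^p dx\to\int_{\mathbb{R}^N}(I_\alpha*|v_0|^p)|v_0|^p dx$ and $\|v_{\lambda_j}\|_q^q\to\|v_0\|_q^q$, exactly as in \eqref{limites}.

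Next I would identify $v_0$ and upgrade to strong convergence. If $v_0\equiv0$, the two limits above are zero, and $v_{\lambda_j}\in\mathcal{N}_{\lambda_j}$ gives $\|v_{\lambda_j}\|^2=\int_{\mathbb{R}^N}(I_\alpha*|v_{\lambda_j}|^p)|v_{\lambda_j}|^p dx+\lambda_j\|v_{\lambda_j}\|_q^q\to0$, contradicting $\|v_{\lambda_j}\|\ge c>0$ from Lemma \ref{E-}; hence $v_0\ne0$. Since each $v_{\lambda_j}$ is a free critical point of $E_{\lambda_j}$ (Lemma \ref{criticalpoint}), letting $j\to\infty$ in $E'_{\lambda_j}(v_{\lambda_j})\psi=0$, with $\lambda_j\to0$ and the above convergences, gives $E'_0(v_0)\psi=0$ for all $\psi\in X$; by the argument of Proposition \ref{ahead} with $\lambda=0$ this makes $v_0$ a weak solution of \eqref{eq1} with $\lambda=0$, and $v_0\in\mathcal{N}_0=\mathcal{N}_0^-$ by Lemma \ref{vanish}, with positivity obtained as in Proposition \ref{exx} from the regularity result and the strong maximum principle. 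Finally, $E'_0(v_0)v_0=0$ reads $\|v_0\|^2=\int_{\mathbb{R}^N}(I_\alpha*|v_0|^p)|v_0|^p dx$, while $E'_{\lambda_j}(v_{\lambda_j})v_{\lambda_j}=0$ together with the limits above gives $\|v_{\lambda_j}\|^2\to\int_{\mathbb{R}^N}(I_\alpha*|v_0|^p)|v_0|^p dx=\|v_0\|^2$; combined with $v_{\lambda_j}\rightharpoonup v_0$ in the Hilbert space $X$, this forces $v_{\lambda_j}\to v_0$ in $X$.

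The main obstacle is the passage to the limit and the verification of strong convergence in the presence of the nonlocal Choquard term; it is handled by the Hardy–Littlewood–Sobolev inequality (Lemma \ref{lemaHardy}) together with the compact embeddings granted by $(V_1)$–$(V_2)$, which make $u\mapsto\int_{\mathbb{R}^N}(I_\alpha*|u|^p)|u|^p dx$ weakly continuous along bounded sequences, exactly as exploited in Lemmas \ref{converg} and \ref{converg1}. A secondary point — keeping $\|v_{\lambda_j}\|\ge c>0$ along the whole sequence — is already guaranteed by Lemma \ref{E-}, whose constant does not depend on $\lambda$.
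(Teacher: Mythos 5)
Your argument is correct, and it diverges from the paper mainly in the treatment of $u_{\lambda_j}$. The paper does not estimate $\|u_{\lambda_j}\|$ directly: it first shows boundedness, extracts a strong limit $u_0$ (via the argument of Proposition \ref{convergence}), observes that $u_0$ inherits $E_0'(u_0)u_0=0$ and $E_0''(u_0)(u_0,u_0)\geq 0$, and then kills $u_0$ by Lemma \ref{vanish} ($\mathcal{N}_0^+=\mathcal{N}_0^0=\emptyset$). Your route is more elementary and quantitative: on $\mathcal{N}_{\lambda_j}$ the reduced energy identity $E_{\lambda_j}(u_{\lambda_j})=\frac{p-1}{2p}\|u_{\lambda_j}\|^2-\lambda_j\frac{2p-q}{2pq}\|u_{\lambda_j}\|_q^q$ together with $\mathcal{E}^1_{\lambda_j}<0$ (Lemma \ref{sing-u}) and the embedding $X\hookrightarrow L^q$ gives $\|u_{\lambda_j}\|^{2-q}\leq C\lambda_j$, which yields $u_{\lambda_j}\to 0$ with an explicit rate and without invoking Lemma \ref{vanish} at all for this half; the paper's route, by contrast, is the one that generalizes when a sign or energy bound is not available. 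For $(v_{\lambda_j})$ your scheme is structurally the same as the paper's (upper bound for $\mathcal{E}^2_{\lambda_j}$ by projecting a fixed $w$ onto $\mathcal{N}^-_{\lambda_j}$, coercivity of the reduced functional to get boundedness, nontriviality of the weak limit from the $\lambda$-independent lower bound of Lemma \ref{E-} combined with the Nehari identity, passage to the limit in $E'_{\lambda_j}(v_{\lambda_j})\psi=0$, and membership in $\mathcal{N}_0^-$ via Lemma \ref{vanish}); the only real difference is how strong convergence is obtained: the paper reuses the test-function argument of Proposition \ref{convergence} (pairing $E'$ with $v_{\lambda_j}-v_0$), whereas you pass to the limit in the Nehari identity to get $\|v_{\lambda_j}\|\to\|v_0\|$ and conclude from weak plus norm convergence in the Hilbert space $X$ — both are valid, and yours is slightly shorter once the limit equation is in hand. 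Two cosmetic remarks: the convergence is, as in the paper, along a subsequence (the limit $v_0$ is not claimed to be unique), and the positivity of $v_0$ you sketch via Proposition \ref{exx} is not required by the statement of the proposition itself.
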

\begin{proof}
	Initially, we shall consider the sequence $(u_{\lambda_j})$. Remember that
$\mathcal{E}^1_{\lambda_j}=E_{\lambda_j}(u_{\lambda_j})$ and $u_{\lambda_j}$ satisfies the following conditions:
\begin{itemize}
	\item[(i)] $u_{\lambda_j}$ is a weak solution for Problem \eqref{eq1} with $\lambda=\lambda_j$;
	\item[(ii)] $u_{\lambda_j}\in \mathcal{N}_{\lambda_j}^+$ and $(u_{\lambda_j})$ is a minimizer in $\mathcal{N}_{\lambda_j}^+$.
\end{itemize}
	Now we claim that $(u_{\lambda_j})$ is bounded. In fact, using that $u_{\lambda_j}\in \mathcal{N}_{\lambda_j}$ and Lemma \ref{sing-u} together with  the compact embedding $X\hookrightarrow L^r(\mathbb{R}^N)$ for each $r\in [1,2^*)$, we obtain
	$$\left(\frac{1}{2}-\frac{1}{2p}\right)\|u_{\lambda_j}\|^2-\lambda_j\left(\frac{1}{q}-\frac{1}{2p}\right)C\|u_{\lambda_j}\|^q_q\leq \mathcal{E}_{\lambda_j}^1<0.$$
	Hence the sequence $(u_{\lambda_j})$ is now bounded. Moreover, using the same ideas discussed in the proof of Proposition \ref{convergence}, we also deduce that $u_{\lambda_j}\to u_0$ in $X$. It follows from  items $(i)$ and $(ii)$ given just above that
	\begin{itemize}
		\item[(a)] $u_0$ is a solution of \eqref{eq1} with $\lambda=0$;
		\item[(b)] $u_0$ satisfies $E'_0(u_0)(u_0)=0$ and $E''_0(u_0)(u_0,u_0)\geq0$.		
	\end{itemize}
	According to item $(b)$ we observe that $u_0\in \mathcal{N}_0\cup \{0\}$. In view of Lemma \ref{vanish} the set $\mathcal{N}_0^+=\emptyset$. This assertion implies that $E''_0(u_0)(u_0,u_0)=0$. As a consequence, applying Lemma \ref{vanish} once more, we deduce $u_0\equiv 0$. The last assertion says that  $u_{\lambda_j} \to 0$ as $j\to\infty$.	
	
	Now we shall consider the sequence $(v_{\lambda_j})$. Notice that $\mathcal{E}^2_{\lambda_j}=E_{\lambda_j}(v_{\lambda_j})$
	and $v_{\lambda_j}$ satisfies the following properties: 
	\begin{itemize}
		\item[(iii)] $v_{\lambda_j}$ is a nontrivial weak solution for the Problem \eqref{eq1} with $\lambda=\lambda_j$;
		\item[(iv)] $v_{\lambda_j}\in \mathcal{N}_{\lambda_j}^-$ and $(u_{\lambda_j})$ is a minimizer in $\mathcal{N}_{\lambda_j}^-$.
	\end{itemize}
	Note that $(v_{\lambda_j})$ is also bounded sequence in $X$. In fact, we observe that  $\mathcal{E}^2_{\lambda_j}=E_{\lambda_j}(v_{\lambda_j})\leq E_0(u)$ holds for all $u\in X$. In particular, we infer that 
	$$\frac12\|v_{\lambda_j}\|^2-\frac{1}{2p}\int_{\mathbb{R}^N}(I_\alpha*|v_{\lambda_j}|^p)|v_{\lambda_j}|^{p}dx-\frac1q\|v_{\lambda_j}\|_q^q\leq \mathcal{E}^2_{0}.$$
	Furthermore, using that $v_{\lambda_j}\in\mathcal{N}_{\lambda_j}$ and $\lambda_j\in(0,\lambda_n)$, one has
	$$\left(\frac{1}{2}-\frac{1}{2p}\right)\|v_{\lambda_j}\|^2-\lambda_n\left(\frac{1}{q}-\frac{1}{2p}\right)C\|v_{\lambda_j}\|^q_q\leq \mathcal{E}_{0}^2.$$
	Hence $(v_{\lambda_j})$ is now bounded sequence in $X$. Using the ideas employed just above we deduce that $v_{\lambda_j}\to v_0$ in $X$ as $j\to \infty$. Moreover, by using the same ideas discussed in the proof of \eqref{est-N-}, we conclude that there exists a constant $C>0$ which does not depend on $\lambda$ such that $C \leq \|v_{\lambda_j}\|$. The last statement and the strong converge given just above show that $v_0\neq 0$. Under these conditions, taking the limit $j\to \infty$, it follows from items $(iii)$ and $(iv)$ the following statements
	\begin{itemize}
		\item[(c)] $v_0$ is a weak solution for the Problem \eqref{eq1} with $\lambda=0$;
		\item[(d)] $v_0$ satisfies $E_0'(v_0)(v_0)=0$ and $E''_0(v_0)(v_0,v_0)\leq0$.		
	\end{itemize} 
	Now we shall argue by contradiction. Assume that the function $v_0$ satisfies 
	$E''_0(v_0)(v_0,v_0)=0.$
	As a consequence, by using the same strategy employed in the proof of Lemma \ref{vanish}, we also see that  $v_0\equiv 0$.  This is a contradiction proving that the function $v_0$ satisfies the following conditions
	$$E'_0(v_0)(v_0)=0\quad\mbox{and}\quad E''_0(v_0)(v_0,v_0) < 0.$$
	Hence $v_0 \in \mathcal{N}_0^{-}$. It follows from item $(c)$ that $v_0$ is a weak solution of Problem \eqref{eq1}. This ends the proof. 	
\end{proof}

\subsection{The proof of Theorem \ref{theorem4}}
In this subsection the main objective is to find existence of weak solutions for Problem \eqref{eq1} assuming that $\lambda= \lambda_n$. In order to do that in the present section we shall consider a sequence $\lambda_j\in(\lambda_e,\lambda_n)$ such that $\lambda_j\uparrow \lambda_n$. It follows from Theorem \ref{theorm1} that there exist weak solutions $u_{\lambda_j}$ and $v_{\lambda_j}$ for Problem \eqref{eq1} for each $j \in \mathbb{N}$. Now, we consider some auxiliary results describing how the Nehari manifold behaves assuming that $\lambda= \lambda_n$. Initially, we consider the following result: 
\begin{prop}
	Suppose $(Q)$ and $(V_1)-(V_2)$. Then $\mathcal{N}_{\lambda_n}^0\neq\emptyset$.
\end{prop}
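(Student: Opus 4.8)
The plan is to produce an explicit element of $\mathcal{N}_{\lambda_n}^0$ out of the minimizer of $\Lambda_n$ furnished by Lemma \ref{Lambda}. First I would invoke Lemma \ref{Lambda} ii) to fix $u \in X\setminus\{0\}$ with $\lambda_n = \Lambda_n(u) = \inf_{v\in X\setminus\{0\}}\Lambda_n(v)$, and then set $w := t_n(u)\,u$, where $t_n(u)$ is the unique critical point of $Q_n(t)=R_n(tu)$ given by \eqref{tn}. Since $t_n(u)$ is the global maximum of $Q_n$, we have
\begin{equation*}
R_n(w) = R_n(t_n(u)u) = Q_n(t_n(u)) = \max_{t>0}Q_n(t) = \Lambda_n(u) = \lambda_n .
\end{equation*}
By Remark \ref{rmk1} i) this gives $E'_{\lambda_n}(w)w = 0$, that is, $w \in \mathcal{N}_{\lambda_n}$ (this is exactly the function appearing in Lemma \ref{Lambda} iii)).

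Next I would check that $w$ lies in $\mathcal{N}_{\lambda_n}^0$, i.e. that $E''_{\lambda_n}(w)(w,w)=0$. The point is to compute $\frac{d}{dt}R_n(tw)$ at $t=1$. Writing $R_n(tw) = R_n\big(t\,t_n(u)\,u\big) = Q_n\big(t\,t_n(u)\big)$ and differentiating, one gets
\begin{equation*}
\left.\frac{d}{dt}R_n(tw)\right|_{t=1} = t_n(u)\, Q_n'\big(t_n(u)\big) = 0,
\end{equation*}
because $t_n(u)$ is a critical point of $Q_n$. Since $R_n(w)=\lambda_n$, Proposition \ref{der-Rn} iii) (formula \eqref{save}) then yields $E''_{\lambda_n}(w)(w,w)=0$. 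Hence $w \in \mathcal{N}_{\lambda_n}^0$, and in particular $\mathcal{N}_{\lambda_n}^0 \neq \emptyset$.

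I do not expect a serious obstacle here: the whole argument is a direct reading of Lemma \ref{Lambda} together with Proposition \ref{der-Rn}. The only mild subtlety worth spelling out is that one must use that $t_n(u)$ is a \emph{maximum} point of $Q_n$ (so that $Q_n(t_n(u))$ equals the infimum value $\lambda_n$ and $Q_n'(t_n(u))=0$ simultaneously), rather than an arbitrary point; this is precisely what forces the rescaled function $w = t_n(u)u$ to sit on the degenerate part $\mathcal{N}_{\lambda_n}^0$ of the Nehari set instead of on $\mathcal{N}_{\lambda_n}^+$ or $\mathcal{N}_{\lambda_n}^-$. (Alternatively, one can argue by contradiction: if $\mathcal{N}_{\lambda_n}^0=\emptyset$ then Proposition \ref{compar} would extend to $\lambda=\lambda_n$, contradicting $R_n(t_n(u)u)=\lambda_n$ admitting only a single root; but the constructive argument above is cleaner.)
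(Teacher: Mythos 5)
Your proposal is correct and is essentially the paper's own argument: the paper also takes the minimizer of $\Lambda_n$ furnished by Lemma \ref{Lambda} (already rescaled so that $t=1$ is the maximum point of $Q_n$, i.e. exactly your $w=t_n(u)u$), notes $R_n$ equals $\lambda_n$ there to get membership in $\mathcal{N}_{\lambda_n}$, and uses $Q_n'=0$ at the maximum together with the identity \eqref{save} (Proposition \ref{der-Rn}) to conclude $E''_{\lambda_n}(w)(w,w)=0$. The only cosmetic difference is that you perform the rescaling explicitly rather than naming the rescaled minimizer from the start.
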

\begin{proof}
	In view of Lemma \ref{Lambda} there exists $u_{\lambda_n}\in X\setminus\{0\}$ such that
	$$\lambda_n=\Lambda_n(u_{\lambda_n })=\inf_{u\in X\setminus\{0\}}\Lambda_n(u)=R_n(u_{\lambda_n}).$$
	The last equation ensures that $u_{\lambda_n}\in \mathcal{N}_{\lambda_n}$. Furthermore,  by using the fact that $t=1$ is a maximum point for $Q_n(t):=R_n(tu_{\lambda_n})$, we observe that
	\begin{eqnarray}\label{equiv}
		0=Q_n'(1)=(R_n)'(u_{\lambda_n})u_{\lambda_n}= \|u_{\lambda_n}\|_q^{-q} E_{\lambda_n}''(u_{\lambda_n})(u_{\lambda_n},u_{\lambda_n}).
	\end{eqnarray}
The last assertion says that $u_{\lambda_n}\in \mathcal{N}_{\lambda_n}^0$. This ends the proof. 
\end{proof}

\begin{prop}\label{salva}
Suppose $(Q)$ and $(V_1)-(V_2)$. Then does not exist weak solutions $u \in X$ for the Problem \eqref{eq1} such that $u \in \mathcal{N}_{\lambda_n}^0$.	
\end{prop}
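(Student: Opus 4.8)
The plan is to argue by contradiction: assume that $u\in X\setminus\{0\}$ is a weak solution of \eqref{eq1} with $\lambda=\lambda_n$ and that $u\in\mathcal{N}_{\lambda_n}^0$. First I would show that $u$ is forced to solve a \emph{second} equation. Since $u\in\mathcal{N}_{\lambda_n}$ we have $R_n(u)=\lambda_n$, and since $E''_{\lambda_n}(u)(u,u)=0$, Proposition \ref{der-Rn} (at $t=1$) gives $\tfrac{d}{dt}R_n(tu)|_{t=1}=0$; as $Q_n(t)=R_n(tu)$ has a unique critical point $t_n(u)$, which is its maximum, this forces $t_n(u)=1$ and hence $\Lambda_n(u)=Q_n(t_n(u))=R_n(u)=\lambda_n=\inf_{v\in X\setminus\{0\}}\Lambda_n(v)$. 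So $u$ is a minimizer for $\Lambda_n$, and Lemma \ref{Lambda}\,(iii) applied with $v:=t_n(u)u=u$ shows that $u$ is also a weak solution of the auxiliary problem \eqref{eq-lambda-n}.

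Next I would subtract the two equations. Multiplying the weak formulation of \eqref{eq1} (with $\lambda=\lambda_n$) by $2$ and subtracting it from \eqref{eq-lambda-n}, the terms $-\Delta u$ and $V(x)u$ cancel, leaving
\begin{equation*}
(2p-2)\left(I_\alpha*|u|^p\right)|u|^{p-2}u=(2-q)\,\lambda_n\,|u|^{q-2}u\qquad\text{a.e. in }\mathbb{R}^N .
\end{equation*}
Substituting this back into \eqref{eq1} shows that $u$ in fact solves the purely local, \emph{sublinear} problem $-\Delta u+V(x)u=\mu\,|u|^{q-2}u$ with $\mu:=\lambda_n\tfrac{2p-q}{2p-2}>0$, while simultaneously $I_\alpha*|u|^p=\tfrac{(2-q)\lambda_n}{2p-2}\,|u|^{q-p}$ on $\{u\neq0\}$. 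Now I would invoke the regularity theory: by Theorem \ref{regular} in the Appendix, $u\in C^{1,\beta}_{loc}(\mathbb{R}^N)$, and together with the fine estimates for the Choquard term there, $u\in L^\infty(\mathbb{R}^N)$ and $|u(x)|\to0$ as $|x|\to\infty$; moreover $I_\alpha*|u|^p$ is continuous, strictly positive on all of $\mathbb{R}^N$ (because $|u|^p\not\equiv0$ and $I_\alpha>0$) and $\to0$ at infinity. Since on $\{u\neq0\}$ the modulus $|u(x)|$ equals $\big(\tfrac{2p-2}{(2-q)\lambda_n}I_\alpha*|u|^p(x)\big)^{1/(q-p)}$, which is continuous and strictly positive on the \emph{whole} of $\mathbb{R}^N$, a boundary point of $\{u\neq0\}$ would satisfy $u=0$ with strictly positive modulus; hence $\{u=0\}=\emptyset$, so (by connectedness of $\mathbb{R}^N$) $u$ has constant sign and we may assume $u>0$.

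It remains to reach a contradiction. If $p\ge q$, the relation $(2p-2)\big(I_\alpha*|u|^p\big)(x)=(2-q)\lambda_n\,u(x)^{q-p}$ has left-hand side tending to $0$ as $|x|\to\infty$, while the right-hand side does not tend to $0$ (for $p>q$ it blows up since $u(x)\to0$; for $p=q$ it is a positive constant), a contradiction. If $p<q$, set $\Psi:=I_\alpha*|u|^p>0$; since $u^p=\big(\tfrac{2p-2}{(2-q)\lambda_n}\big)^{\sigma}\Psi^{\sigma}$ with $\sigma:=\tfrac{p}{q-p}>1$ (note $2p>q$), the function $\Psi=I_\alpha*u^p$ is a positive entire solution of the fractional Lane--Emden equation $(-\Delta)^{\alpha/2}\Psi=c\,\Psi^{\sigma}$. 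One then uses the Liouville/classification theory: for $1<\sigma<\tfrac{N+\alpha}{N-\alpha}$ there is no positive entire solution; the critical case $\sigma=\tfrac{N+\alpha}{N-\alpha}$ cannot occur, since it would force $q=\tfrac{2pN}{N+\alpha}$, which together with $p>2_\alpha$ gives $q>2$, contradicting $(Q)$; and in the supercritical case every positive entire solution decays like $|x|^{-\alpha/(\sigma-1)}$, so $u^p\sim|x|^{-\alpha\sigma/(\sigma-1)}$ with $\alpha\sigma/(\sigma-1)<N$ (because $\sigma>\tfrac{N+\alpha}{N-\alpha}>\tfrac{N}{N-\alpha}$), i.e.\ $u\notin L^p(\mathbb{R}^N)$, contradicting $X\hookrightarrow L^p(\mathbb{R}^N)$.

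The step carrying all the weight is the last one in the range $2_\alpha<p<q<2$: there the naive observation $u(x)\to0$ no longer closes the argument, and one genuinely needs the precise behaviour of the nonlocal term $I_\alpha*|u|^p$ at infinity (e.g.\ $I_\alpha*|u|^p(x)\asymp|x|^{\alpha-N}$ coming from $u^p\in L^1$) together with a Liouville-type obstruction, or, alternatively, the additional structure available when $u$ is one of the minimizers $u_{\lambda_n}\in\mathcal{N}^+_{\lambda_n}$, $v_{\lambda_n}\in\mathcal{N}^-_{\lambda_n}$. This is exactly why the authors isolate a regularity theorem and a sharp analysis of the Choquard term in the Appendix, and I would lean on Theorem \ref{regular} and the accompanying decay estimates for it.
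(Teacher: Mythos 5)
Your derivation of the two equations and of the pointwise identity $(I_\alpha*|u|^p)\,|u|^{p-q}=\tfrac{(2-q)\lambda_n}{2p-2}$ on $\{u\neq0\}$ is exactly the paper's argument (you even justify $\Lambda_n(u)=\lambda_n$ through $t_n(u)=1$, a detail the paper only asserts before invoking Lemma \ref{Lambda}), and your contradiction in the regime $p\ge q$ — both factors of $(I_\alpha*|u|^p)\,|u|^{p-q}$ small at infinity, via Lemma \ref{esti-I} and $u(x)\to0$ — is the printed proof; replacing the maximum principle by your continuity argument showing $\{u=0\}=\emptyset$ is harmless, and the passage from $C^{1,\beta}_{loc}$ (Theorem \ref{regular}) to decay of $u$ at infinity is the same leap the paper itself makes, so I do not count it against you. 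Note that the paper's step ``$|u|^{p-q}<\tfrac{(2-q)\lambda_n}{2p-2}$ on $B_R(0)^c$'' is precisely where it tacitly uses $p\ge q$, so you have correctly identified the delicate regime $2_\alpha<p<q<2$ as the one not covered by that computation.

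However, your treatment of that regime contains a genuine gap, and most of it is vacuous. Since $q<2$ and $p>2_\alpha=(N+\alpha)/N$, one has $q<2<\tfrac{2pN}{N+\alpha}$, which is equivalent to $\sigma=\tfrac{p}{q-p}>\tfrac{N+\alpha}{N-\alpha}$; hence every admissible pair with $p<q$ lands automatically in the supercritical case, and the subcritical Liouville theorem as well as your critical-case exclusion never apply. In the supercritical case the only thing you offer is the assertion that every positive entire solution of $(-\Delta)^{\alpha/2}\Psi=c\,\Psi^{\sigma}$ decays like $|x|^{-\alpha/(\sigma-1)}$; this is not a citable theorem (all the more so for $\alpha\ge2$, where only the integral formulation $\Psi=c\,I_\alpha*\Psi^{\sigma}$ makes sense), and it is exactly what would have to be proved here, namely that no positive supercritical solution can have the fast decay $\Psi\asymp|x|^{\alpha-N}$ which your $\Psi=I_\alpha*|u|^p$ actually possesses by Lemma \ref{esti-I}. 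Nothing falls out of the asymptotics alone: $\Psi\asymp|x|^{\alpha-N}$ gives $\Psi^{\sigma}\in L^1(\mathbb{R}^N)$ and $I_\alpha*\Psi^{\sigma}\asymp|x|^{\alpha-N}$, so the integral equation is self-consistent at infinity, and your proposed contradiction $u\notin L^p(\mathbb{R}^N)$ presupposes the unproved decay classification. To close the case $p<q$ you would need an actual nonexistence result for fast-decaying positive solutions of the supercritical integral equation (for instance via a Pohozaev-type identity), or some additional structure of the minimizers; as written, the only part of your proposal that goes beyond the paper's proof is not established.
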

\begin{proof}
	The proof follows arguing by contradiction. Assume that there exist $u \in \mathcal{N}_{\lambda_n}^0$, i.e., $E_{\lambda_n}''(u)(u,u)=0$ in such way that $u$ is a weak solution for our main problem \eqref{eq1}. Hence $E'_{\lambda_n}(u) \psi = 0$ for every $\psi \in X$. Here we observe that $u$ satisfies $\lambda_n=\Lambda_n(u)$. As a consequence, the function $u$ is also a weak solution of \eqref{eq-lambda-n}. Under these conditions, we obtain
\begin{eqnarray}
(I_\alpha*|u|^p)|u|^{p-2}u=\frac{(2-q)\lambda_n}{2p-2}|u|^{q-2}u,~
\mbox{a.e. in }\mathbb{R}^N.\nonumber
\end{eqnarray}
The last assertion implies that 
\begin{eqnarray}
(I_\alpha*|u|^p)|u|^{p-q}=\frac{(2-q)\lambda_n}{2p-2},~\mbox{a.e. in }[u\neq 0].\nonumber
\end{eqnarray}
Notice also that $u \in  C^{1, \beta}_{loc}(\mathbb{R}^N)$ for some $\beta \in (0, 1)$
which can be done using Theorem \ref{regular} in Appendix. In particular, the last assertion implies
also that $u \in C^{0}(\mathbb{R}^N)$ showing that $u (x) \to 0$ as $|x| \to \infty$. In this case, by using the strong maximum principle and the same ideas
discussed in the proof of Proposition \ref{exx}, we mention that $u > 0$ in $\mathbb{R}^N$. Furthermore, by using Lemma \ref{esti-I} in
Appendix, there exists constants $C, R > 0$ such that
$$\left|\frac{(I_\alpha*|u|^p)(x)}{I_\alpha(x)}\right|\leq C\|u\|_p^p, \quad I_\alpha(x)<\frac{1}{C\|u\|_p^p}, x \in B_R(0)^c.$$
Notice also that $|u|^{p-q}<\frac{(2-q)\lambda_n}{2p-2}$ in $B_R(0)^c$ holds for any $R > 0$ large enough. As a product
\begin{eqnarray}
\frac{(2-q)\lambda_n}{2p-2}=\frac{(I_\alpha*|u|^p)(x)}{I_\alpha(x)}I_\alpha(x)|u(x)|^{p-q}(x)<\frac{(2-q)\lambda_n}{2p-2},~\mbox{a.e. in }B_R(0)^c.\nonumber
\end{eqnarray}
This is a contradiction proving that any weak solution $u$ for the Problem \eqref{eq1} satisfies $u \notin \mathcal{N}_{\lambda_n}^0$. This ends the proof.
\end{proof}

	 \begin{prop}\label{minimizer} 	Suppose $(Q)$ and $(V_1)-(V_2)$.	
				Let $(v_k)$ be a minimizer sequence for $E_{\lambda_n}$ restricted to $\mathcal{N}^{-}_{\lambda_n}$. Then there there exists $v_{\lambda_n} \in X$
				such that $v_k \to v_{\lambda_n}$ in $X$.
		\end{prop}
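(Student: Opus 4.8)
The plan is to run a constrained minimization on $\mathcal{N}_{\lambda_n}^-$ along the lines of Lemma~\ref{converg}, but at the critical value $\lambda=\lambda_n$, the extra care being needed because $\mathcal{N}_{\lambda_n}^0$ is no longer empty. First I would record that $\mathcal{E}_{\lambda_n}^2:=\inf_{\mathcal{N}_{\lambda_n}^-}E_{\lambda_n}<0$. Since $\lambda_e=\inf_{X\setminus\{0\}}\Lambda_e<\lambda_n$ by Remark~\ref{impor}, choose $u_0\in X\setminus\{0\}$ with $\lambda_e\le\Lambda_e(u_0)<\lambda_n$; arguing exactly as in the proof of Theorem~\ref{theorm1}~(iii), the larger root $t_{\lambda_n}^{n,-}(u_0)$ of $R_n(tu_0)=\lambda_n$ lies in $(0,t_e(u_0))$, so $R_e(t_{\lambda_n}^{n,-}(u_0)u_0)<R_n(t_{\lambda_n}^{n,-}(u_0)u_0)=\lambda_n$, and Remark~\ref{rmk11} gives $E_{\lambda_n}(t_{\lambda_n}^{n,-}(u_0)u_0)<0$ with $t_{\lambda_n}^{n,-}(u_0)u_0\in\mathcal{N}_{\lambda_n}^-$.

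Next, by Lemma~\ref{coercive} the functional $E_{\lambda_n}$ is coercive on $\mathcal{N}_{\lambda_n}\supset\mathcal{N}_{\lambda_n}^-$, so the minimizing sequence $(v_k)$ is bounded in $X$. Up to a subsequence, $v_k\rightharpoonup v_{\lambda_n}$ in $X$, $v_k\to v_{\lambda_n}$ in $L^r(\Rn)$ for every $r\in[1,2^*)$ and a.e.\ in $\Rn$; by Lemma~\ref{lemaHardy} and Remark~\ref{rmHardy} both $\Int(I_\alpha*|v_k|^p)|v_k|^p\,dx$ and $\Int|v_k|^q\,dx$ converge to the corresponding integrals of $v_{\lambda_n}$. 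If $v_{\lambda_n}=0$ these limits vanish, forcing $E_{\lambda_n}(v_k)=\tfrac12\|v_k\|^2+o_k(1)\ge o_k(1)$, which contradicts $E_{\lambda_n}(v_k)\to\mathcal{E}_{\lambda_n}^2<0$. Hence $v_{\lambda_n}\neq 0$, and $\|v_k\|\ge c>0$ for every $k$ by Lemma~\ref{E-}.

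The heart of the matter is the strong convergence, which I would prove by contradiction. If $v_k\not\to v_{\lambda_n}$, then along a further subsequence $d^2:=\lim_k\|v_k\|^2-\|v_{\lambda_n}\|^2>0$. Set $\psi(t):=\lim_k E_{\lambda_n}(tv_k)=E_{\lambda_n}(tv_{\lambda_n})+\tfrac{t^2}{2}d^2$, so $\psi(t)>\phi(t):=E_{\lambda_n}(tv_{\lambda_n})$ for all $t>0$. Passing to the limit in $R_n(v_k)=\lambda_n$ and in $E''_{\lambda_n}(v_k)(v_k,v_k)<0$ (recall \eqref{segunda}) yields $\psi'(1)=0$, $\psi''(1)\le 0$, and also $E'_{\lambda_n}(v_{\lambda_n})v_{\lambda_n}=-d^2<0$, hence $R_n(v_{\lambda_n})<\lambda_n$. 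A short computation shows that $\psi''(1)=0$ would force $R_n(tv_{\lambda_n})<\lambda_n$ for all $t>0$, i.e.\ $\Lambda_n(v_{\lambda_n})<\lambda_n$, which is impossible; so $\psi''(1)<0$, $t=1$ is a strict local maximum of $\psi$ with $\psi(1)=\lim_k E_{\lambda_n}(v_k)=\mathcal{E}_{\lambda_n}^2$, and $t\mapsto\psi(t)$ is increasing on the interval $(\hat t,1)$ delimited by the smaller root $\hat t$ of $\psi'=0$. If, moreover, $\Lambda_n(v_{\lambda_n})>\lambda_n$, the fibering $\phi$ has a genuine local maximum at some $t_{\lambda_n}^{n,-}(v_{\lambda_n})<1$ with $t_{\lambda_n}^{n,-}(v_{\lambda_n})v_{\lambda_n}\in\mathcal{N}_{\lambda_n}^-$; since $R_n(\,\cdot\,v_{\lambda_n})$ is strictly below the quotient built from $\lim_k\|v_k\|^2$, the point $t_{\lambda_n}^{n,-}(v_{\lambda_n})$ falls in $(\hat t,1)$, whence $\psi(t_{\lambda_n}^{n,-}(v_{\lambda_n}))<\mathcal{E}_{\lambda_n}^2$ and
\[
\mathcal{E}_{\lambda_n}^2\le E_{\lambda_n}\big(t_{\lambda_n}^{n,-}(v_{\lambda_n})v_{\lambda_n}\big)=\phi\big(t_{\lambda_n}^{n,-}(v_{\lambda_n})\big)<\psi\big(t_{\lambda_n}^{n,-}(v_{\lambda_n})\big)<\mathcal{E}_{\lambda_n}^2,
\]
a contradiction; hence $v_k\to v_{\lambda_n}$ in $X$.

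The step I expect to be the main obstacle is the one case left open above, namely $\Lambda_n(v_{\lambda_n})=\lambda_n$, where $v_{\lambda_n}$ minimizes $\Lambda_n$ and $t_n(v_{\lambda_n})v_{\lambda_n}\in\mathcal{N}_{\lambda_n}^0$ — exactly the degeneracy created by $\mathcal{N}_{\lambda_n}^0\neq\emptyset$. To rule it out I would choose $(v_k)$ from the start to be a Palais--Smale minimizing sequence provided by Ekeland's variational principle applied to $E_{\lambda_n}$ restricted to $\mathcal{N}_{\lambda_n}^-$ (a $C^1$ constraint away from $\mathcal{N}_{\lambda_n}^0$); then the strong limit $v_{\lambda_n}$ is a free critical point of $E_{\lambda_n}$, hence a weak solution of \eqref{eq1} by Proposition~\ref{ahead}, and Proposition~\ref{salva} — which rests on the regularity and decay estimates of the Appendix (Theorem~\ref{regular} and Lemma~\ref{esti-I}) — forbids $v_{\lambda_n}\in\mathcal{N}_{\lambda_n}^0$. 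Since $R_n(v_{\lambda_n})=\lambda_n$ and $t\mapsto R_n(tv_{\lambda_n})$ is unimodal, this gives $\Lambda_n(v_{\lambda_n})>\lambda_n$, returning us to the situation treated above; in fact, for a Palais--Smale sequence one may bypass the fibering comparison altogether and obtain $v_k\to v_{\lambda_n}$ directly by testing $E'_{\lambda_n}(v_k)-E'_{\lambda_n}(v_{\lambda_n})$ against $v_k-v_{\lambda_n}$ as in Proposition~\ref{convergence}, the nonlocal and subcritical terms being compact. The delicate structural point throughout is that $\mathcal{N}_{\lambda_n}^-$ is not closed, so one must prevent the minimizing (or Palais--Smale) sequence from drifting into $\mathcal{N}_{\lambda_n}^0$; this is precisely where information about \emph{solutions}, rather than merely about the variational level, becomes indispensable.
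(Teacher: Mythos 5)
Your first three paragraphs track the paper's own proof of Proposition \ref{minimizer} and, at one point, improve on it: the construction of an element of $\mathcal{N}_{\lambda_n}^-$ with negative energy is the same (choose $u_0$ with $\Lambda_e(u_0)<\lambda_n$, use $R_e<R_n$ on $(0,t_e)$ and Remark \ref{rmk11}; note that you should require the \emph{strict} inequality $\lambda_e<\Lambda_e(u_0)$, as the paper does, since $\Lambda_e(u_0)=\lambda_e$ forces $\Lambda_n(u_0)=\lambda_n$ and then the large root lands in $\mathcal{N}_{\lambda_n}^0$, not $\mathcal{N}_{\lambda_n}^-$). Your contradiction argument via the deficit $d^2$ and the limit fibering $\psi(t)=E_{\lambda_n}(tv_{\lambda_n})+\tfrac{t^2}{2}d^2$ reproduces the paper's strict inequalities for $E_{\lambda_n}$, $R_n$ and $R_n'$ at the weak limit, and your localization $t_{\lambda_n}^{n,-}(v_{\lambda_n})\in(\hat t,1)$ with $\psi$ increasing there actually justifies the chain $\mathcal{E}_{\lambda_n}^2\le E_{\lambda_n}(t_{\lambda_n}^{n,-}(v_{\lambda_n})v_{\lambda_n})<\liminf E_{\lambda_n}(t_{\lambda_n}^{n,-}(v_{\lambda_n})v_k)\le\mathcal{E}_{\lambda_n}^2$ more carefully than the paper does (the paper's last inequality is asserted without checking where $t_{\lambda_n}^{n,-}(v_{\lambda_n})$ sits relative to the fibering of $v_k$). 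So in the nondegenerate case $\Lambda_n(v_{\lambda_n})>\lambda_n$ your argument is correct and is essentially the paper's.

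The genuine gap is exactly where you put it, in the residual case $\Lambda_n(v_{\lambda_n})=\lambda_n$ (then every fibering critical point of $v_{\lambda_n}$ lies in $\mathcal{N}_{\lambda_n}^0$ and no multiple of $v_{\lambda_n}$ belongs to $\mathcal{N}_{\lambda_n}^-$, so the comparison above cannot even start); the paper's proof silently assumes this case away, so you have located a real delicacy, but your proposed repair does not close it. First, replacing the given minimizing sequence by an Ekeland/Palais--Smale sequence proves a different statement: Proposition \ref{minimizer} asserts convergence of the \emph{given} sequence $(v_k)$, and the later Propositions use it for the specific sequences at hand. Second, and more seriously, the Ekeland route is circular as sketched: $\mathcal{N}_{\lambda_n}^-$ is not complete (only $\mathcal{N}_{\lambda_n}^-\cup\mathcal{N}_{\lambda_n}^0$ is closed, Lemma \ref{E-}), so Ekeland must be run on the closure, and then passing from the constrained almost-criticality to free criticality of the limit (the analogue of Lemma \ref{criticalpoint}, i.e.\ killing the Lagrange multiplier) requires $E''_{\lambda_n}(\cdot)(\cdot,\cdot)$ to stay away from zero along the sequence and at the limit --- precisely the nondegeneracy you are trying to establish; likewise the convergence ``by testing $E'_{\lambda_n}(v_k)-E'_{\lambda_n}(v_{\lambda_n})$ against $v_k-v_{\lambda_n}$'' presupposes $E'_{\lambda_n}(v_k)\to0$ in $X'$, which is not known for a constrained sequence near $\mathcal{N}_{\lambda_n}^0$. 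Proposition \ref{salva} can only be invoked \emph{after} one knows $v_{\lambda_n}$ solves \eqref{eq1}, so it cannot be the tool that rules the degenerate case out at this stage; and the alternative of using Lemma \ref{Lambda}(iii) (a minimizer of $\Lambda_n$ solves the auxiliary problem \eqref{eq-lambda-n}, not \eqref{eq1}) does not feed into Proposition \ref{salva} either. You acknowledge this obstruction in your last sentence, but acknowledging it is not the same as resolving it, so as written the proof is incomplete in the case $\Lambda_n(v_{\lambda_n})=\lambda_n$.
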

\begin{proof}
	Initially, we observe that there exists $\psi \in\mathcal{N}_{\lambda_n}^-$ such that $E_{\lambda_n}(\psi)<0$. In fact, by using the definition of $\lambda_e$ there exists $u\in X$ such that $\lambda_e<\Lambda_e(u)<\lambda_n$. Here we also mention that $\Lambda_n(u)>\lambda_n$. Otherwise, we must obtain $\Lambda_n(u)=\lambda_n$. Taking into account Remark \ref{impor} we also obtain $\Lambda_e(u)=\lambda_e$ which is a contradiction with $\lambda_e < \Lambda_e(u)$. In this way, $\Lambda_n(u)>\lambda_n$ is now verified proving that $t_{\lambda_n}^{n,-}(u)$ is well defined. Furthermore, we observe that $\Lambda_e(u)<\lambda_n$. As a consequence, we deduce that $t_{\lambda_n}^{n,-}(u)<t_e(u)$, see  Figure \ref{QnQe-maior-lambda-e} with $\lambda=\lambda_n$. Once more using the fact that ${R}_n(tu)>{R}_e(tu)$ for any $t\in(0,t_e(u))$ the last inequality says that 
		$$\lambda_n={R}_n(t_{\lambda_n}^{n,-}(u)u)>{R}_e(t_{\lambda_n}^{n,-}(u)u).$$
	Therefore, by applying Remark \ref{rmk11}-iii), we conclude that $E_{\lambda_n}(t_{\lambda_n}^{n,-}(u)u)<0$. For our purposes we define $\psi=t_{\lambda_n}^{n,-}(u)u \in \mathcal{N}_\lambda^-$.
	
	Let $(v_k)$ be a minimizer sequence for $E_{\lambda_n}{{\Large{|}}_{\mathcal{N}^-_{\lambda_n}}}$. Since  $E_{\lambda_n}{\Large{|}}_{{\mathcal{N}^-_{\lambda_n}}}$ is coercive we observe that $(v_k)$ is bounded in $X$. Hence, up to a subsequence,  there exists $v_{\lambda_n}\in X$ such that $v_k\rightharpoonup v_{\lambda_n}$ in $X$. Notice also that the functional $u \mapsto E_{\lambda_n}(u)$ is weakly lower semicontinuous. As a consequence,  we obtain
	$$E_{\lambda_n}(v_{\lambda_n})\leq\liminf E_{\lambda_n}(v_k) = \mathcal{E}_{\lambda_n}^2 \leq E_{\lambda_n}(\psi)< 0$$
	holds true for $\psi \in \mathcal{N}_{\lambda_n}^-$ defined above. In particular, the last estimate ensures that $v_{\lambda_n}\neq 0$. From now on the proof follows arguing by contradiction. Assume that $(v_k)$ does not converge to $v_{\lambda_n}$ in $X$. The last assertion implies that
\begin{eqnarray}
	E_{\lambda_n}(v_{\lambda_n})&<&\liminf E_{\lambda_n}(v_k)\label{e1}\\
	R_{n}(v_{\lambda_n})&<&\liminf R_{n}(v_k)\label{e2}=\lambda_n\\
	R'_{n}(v_{\lambda_n})v_{\lambda_n}&<&\liminf R'_{n}(v_k)v_k\leq 0\label{e3}.
	\end{eqnarray}
In view of \eqref{e2} we infer that $E'_{\lambda_n}(v_{\lambda_n})(v_{\lambda_n}) < 0$, see Remark \ref{rmk1}. Recall also that $E''_{\lambda}(v_k)(v_k, v_k) < 0$ which together with \eqref{e3} implies that $E''_{\lambda}(v_{\lambda_n})(v_{\lambda_n}, v_{\lambda_n}) < 0$, see Proposition \ref{der-Rn}.  As a product we obtain that $t^{n,-}_{\lambda_n}(v_{\lambda_n}) < 1 = t_{\lambda_n}^{n,-}(v_{k})$ holds for any $k \in \mathbb{N}$.
Since $(v_k)$ does not converge to $v_{\lambda_n}$ in $X$  we deduce the following estimates 
		$$
		\mathcal{E}_{\lambda_n}^2 \leq E_{\lambda_n}(t^{n,-}_{\lambda_n}(v_{\lambda_n})v_{\lambda_n}) < \liminf E_{\lambda_n}(t^{n,-}_{\lambda_n}(v_{\lambda_n})v_k)\leq \liminf E_{\lambda_n}(v_k)=\mathcal{E}_{\lambda_n}^2.$$ 
		This is a contradiction proving that $v_k\to v_{\lambda_n}$. This ends the proof. 
	\end{proof}

Now, recalling that  $(u_{\lambda_j})$ and $(v_{\lambda_j})$  are weak solutions for Problem \eqref{eq1} where $\lambda_j\in(\lambda_e,\lambda_n)$ is a sequence in such way that $\lambda_j\uparrow \lambda_n$. Hence we can prove the following result:
\begin{prop}\label{incial}
	Suppose $(Q)$ and $(V_1)-(V_2)$.  Assume also that $\lambda_j\uparrow \lambda_n$ as $j \to \infty$. Then, up to subsequences, there exist $u_{\lambda_n}\in X$ and $v_{\lambda_n}\in X\setminus\{0\}$ such that
	\begin{itemize}
		\item[(i)] $u_{\lambda_j}\to u_{\lambda_n}$ and $v_{\lambda_j}\to v_{\lambda_n}$ in $X$;
		\item[(ii)] $u_{\lambda_n}\in \mathcal{N}_{\lambda_n}^+$ and $v_{\lambda_n}\in \mathcal{N}_{\lambda_n}^-$;
		\item[(iii)] $u_{\lambda_n}$ and $v_{\lambda_n}$ are distinct weak solutions of Problem \eqref{eq1} where $\lambda=\lambda_n$;
		\item[(iv)] $\mathcal{E}_{\lambda_n}^1<\mathcal{E}_{\lambda_n}^2$.
	\end{itemize}
\end{prop}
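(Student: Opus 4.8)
\textbf{Proof strategy for Proposition \ref{incial}.} The plan is to run a compactness argument along the sequence $\lambda_j\uparrow\lambda_n$, in the spirit of the proofs of Propositions \ref{Q}, \ref{convergence} and \ref{monot}, and then to invoke the regularity theory of the Appendix, through Proposition \ref{salva}, in order to prevent the limits from falling into the degenerate set $\mathcal{N}_{\lambda_n}^0$.

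First I would prove that $(u_{\lambda_j})$ and $(v_{\lambda_j})$ are bounded in $X$. For $(u_{\lambda_j})$ this follows from $\mathcal{E}^1_{\lambda_j}=E_{\lambda_j}(u_{\lambda_j})<0$ (Lemma \ref{sing-u}) together with the coercivity estimate of Lemma \ref{coercive}, using $\lambda_j<\lambda_n$ and $1<q<2$. For $(v_{\lambda_j})$ one first bounds $\mathcal{E}^2_{\lambda_j}$ from above by projecting a fixed $u_0$ with $\Lambda_n(u_0)>\lambda_n$ onto $\mathcal{N}_{\lambda_j}^-$ and letting $j\to\infty$ (the projection being continuous in $\lambda$ by the argument of Proposition \ref{F}), and then applies Lemma \ref{coercive} again. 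Passing to subsequences we get $u_{\lambda_j}\rightharpoonup u_{\lambda_n}$ and $v_{\lambda_j}\rightharpoonup v_{\lambda_n}$ in $X$, hence strong convergence in $L^r(\mathbb{R}^N)$ for each $r\in[1,2^*)$ by the compact embedding. Since each $u_{\lambda_j}$ and $v_{\lambda_j}$ is a free critical point of $E_{\lambda_j}$ (Lemma \ref{criticalpoint}), letting $j\to\infty$ in the weak formulation — the convolution term passing to the limit by Lemma \ref{lemaHardy} and the compact embedding, and the concave term because $\lambda_j\to\lambda_n$ and $u_{\lambda_j}\to u_{\lambda_n}$ in $L^q(\mathbb{R}^N)$ — shows that $u_{\lambda_n}$ and $v_{\lambda_n}$ are weak solutions of \eqref{eq1} with $\lambda=\lambda_n$. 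The strong convergence $u_{\lambda_j}\to u_{\lambda_n}$ and $v_{\lambda_j}\to v_{\lambda_n}$ in $X$ then follows by testing $E'_{\lambda_j}(u_{\lambda_j})-E'_{\lambda_n}(u_{\lambda_n})$ against $u_{\lambda_j}-u_{\lambda_n}$, exactly as in Proposition \ref{convergence}.

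Next I would check that the limits are nontrivial and locate them. For $u_{\lambda_n}$ note that $E_{\lambda_n}(u_{\lambda_n})=\lim_j\mathcal{E}^1_{\lambda_j}\le\mathcal{E}^1_{\lambda_{j_0}}<0$ for any fixed $j_0$, by the monotonicity of Proposition \ref{monot}, so $u_{\lambda_n}\neq 0$; for $v_{\lambda_n}$ the uniform bound $\|v_{\lambda_j}\|\ge C>0$ from \eqref{est-N-} (equivalently Lemma \ref{E-}) passes to the limit, so $v_{\lambda_n}\neq 0$. Passing to the limit in the relations $R_n(u_{\lambda_j})=\lambda_j$ with $(R_n)'(u_{\lambda_j})u_{\lambda_j}>0$, and $R_n(v_{\lambda_j})=\lambda_j$ with $(R_n)'(v_{\lambda_j})v_{\lambda_j}<0$, gives $R_n(u_{\lambda_n})=\lambda_n$, $(R_n)'(u_{\lambda_n})u_{\lambda_n}\ge 0$ and $R_n(v_{\lambda_n})=\lambda_n$, $(R_n)'(v_{\lambda_n})v_{\lambda_n}\le 0$; by Proposition \ref{der-Rn} this means $u_{\lambda_n}\in\mathcal{N}_{\lambda_n}^+\cup\mathcal{N}_{\lambda_n}^0$ and $v_{\lambda_n}\in\mathcal{N}_{\lambda_n}^-\cup\mathcal{N}_{\lambda_n}^0$.

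The main obstacle is to rule out the degenerate alternative, that is, to exclude $u_{\lambda_n},v_{\lambda_n}\in\mathcal{N}_{\lambda_n}^0$; here the weak limits could a priori land on the critical branch, and this is precisely what the regularity and decay estimates are for. Since $u_{\lambda_n}$ and $v_{\lambda_n}$ are weak solutions of \eqref{eq1} with $\lambda=\lambda_n$, Proposition \ref{salva} (which rests on Theorem \ref{regular} and Lemma \ref{esti-I}) forbids a weak solution from lying in $\mathcal{N}_{\lambda_n}^0$, so $u_{\lambda_n}\in\mathcal{N}_{\lambda_n}^+$ and $v_{\lambda_n}\in\mathcal{N}_{\lambda_n}^-$; this establishes (i) and (ii), and since $\mathcal{N}_{\lambda_n}^+\cap\mathcal{N}_{\lambda_n}^-=\emptyset$ we get $u_{\lambda_n}\neq v_{\lambda_n}$, hence (iii). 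Finally, for (iv): $\mathcal{N}_{\lambda_n}^-$ is nonempty (it contains $v_{\lambda_n}$) and $\mathcal{E}_{\lambda_n}^2$ is attained at some $\widetilde v\in\mathcal{N}_{\lambda_n}^-$ by Proposition \ref{minimizer} together with Proposition \ref{salva}; every $w\in\mathcal{N}_{\lambda_n}^-$ satisfies $\Lambda_n(w)>\lambda_n$, because $w\in\mathcal{N}_{\lambda_n}$ with $E''_{\lambda_n}(w)(w,w)<0$ gives $Q_n'(1)<0$, hence $t_n(w)<1$ and $\Lambda_n(w)=Q_n(t_n(w))>Q_n(1)=\lambda_n$; therefore the projection $t_{\lambda_n}^{n,+}(\widetilde v)\widetilde v\in\mathcal{N}_{\lambda_n}^+$ is well defined, and since $t\mapsto E_{\lambda_n}(t\widetilde v)$ is strictly increasing on $(t_{\lambda_n}^{n,+}(\widetilde v),1)$ one gets $\mathcal{E}_{\lambda_n}^1\le E_{\lambda_n}(t_{\lambda_n}^{n,+}(\widetilde v)\widetilde v)<E_{\lambda_n}(\widetilde v)=\mathcal{E}_{\lambda_n}^2$. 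The delicate point throughout is the passage across $\mathcal{N}_{\lambda_n}^0$, which is handled only by the Appendix estimates.
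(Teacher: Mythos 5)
Your proposal is correct and follows essentially the same route as the paper: boundedness and strong convergence of $(u_{\lambda_j})$, $(v_{\lambda_j})$, nontriviality via the monotone negative energies and the uniform bound \eqref{est-N-}, passage to the limit placing the limits in $\mathcal{N}_{\lambda_n}^{\pm}\cup\mathcal{N}_{\lambda_n}^{0}$, exclusion of $\mathcal{N}_{\lambda_n}^{0}$ by Proposition \ref{salva}, and the projection $t_{\lambda_n}^{n,+}$ argument with Proposition \ref{minimizer} for $\mathcal{E}_{\lambda_n}^1<\mathcal{E}_{\lambda_n}^2$. Your explicit verification that $\Lambda_n(w)>\lambda_n$ for $w\in\mathcal{N}_{\lambda_n}^-$, so that the $\mathcal{N}_{\lambda_n}^+$ projection is well defined at $\lambda=\lambda_n$, is a welcome detail the paper leaves implicit.
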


\begin{proof}
Initially we shall prove the item $(i)$. Using the same ideas discussed in the proof of Proposition \ref{convforte} we infer that $(u_{\lambda_j})$ and $(v_{\lambda_j})$ are bounded sequences. Now, using once more the same ideas employed in the proof of Proposition \ref{convergence}, we see that $u_{\lambda_j}\to u_{\lambda_n}$ and $v_{\lambda_j}\to v_{\lambda_n}$ in $X$. 
	
Now we shall prove the item $(ii)$. Notice that $E'_{\lambda_j}(u_{\lambda_j})u_{\lambda_j}=E'_{\lambda_j}(v_{\lambda_j})v_{\lambda_j}=0$ and $u_{\lambda_j}, v_{\lambda_j}$ are critical points for the functional $E_{\lambda_j}$. Furthermore, we also observe that
	$$E''_{\lambda_j}(u_{\lambda_j})(u_{\lambda_j},u_{\lambda_j})>0\qquad\mbox{and}\qquad E''_{\lambda_j}(v_{\lambda_j})(v_{\lambda_j},v_{\lambda_j})<0.$$
	At this stage, taking the limit and using the strong convergence quoted in the item $(i)$, we deduce that $E'_{\lambda_n}(u_{\lambda_n})u_{\lambda_n}=E'_{\lambda_n}(_{\lambda_n})v_{\lambda_n}=0$. In the same way, we also obtain 
	$$E''_{\lambda_n}(u_{\lambda_n})(u_{\lambda_n},u_{\lambda_n})\geq0\quad\mbox{and}\quad E''_{\lambda_n}(v_{\lambda_n})(v_{\lambda_n},v_{\lambda_n})\leq0.$$
	Moreover, by using Lemma \ref{E-}, we also obtain $v_{\lambda_n}\neq 0$. The last assertion says that $v_{\lambda_n}\in \mathcal{N}_{\lambda_n}^-\cup \mathcal{N}_{\lambda_n}^0$. However, there is no any weak solution $v \in X$ for the Problem \eqref{eq1} such that $v \in \mathcal{N}_\lambda^0$, see Proposition \ref{salva}. As a product we know that $v_{\lambda_n}$ is in $\mathcal{N}_{\lambda_n}^-$. According to Proposition \ref{monot} we know that $(\mathcal{E}_{\lambda_j}^1)$ is a decreasing sequence. Thus, using that $u_{\lambda_j}\to u_{\lambda_n}$ in $X$, we infer also that
	$$E_{\lambda_n}(u_{\lambda_n})=\lim_{j\to\infty} E_{\lambda_j}(u_{\lambda_n})< E_{\lambda_j}(u_{\lambda_j})=\mathcal{E}_{\lambda_j}^1<0,~\forall j\in\mathbb{N}.$$
	Hence $u_{\lambda_n}\neq 0$ showing that $u_{\lambda_n}\in \mathcal{N}_{\lambda_n}^+\cup \mathcal{N}_{\lambda_n}^0$ holds true. Using the same ideas discussed just above we also prove that $u_{\lambda_n}\in \mathcal{N}_{\lambda_n}^+$.	This ends the proof of item $(ii)$.	
	
	The proof of item $(iii)$ follows immediately from items $(i)$ and $(ii)$. The proof of item $(iv)$ follows from Proposition \ref{minimizer}. In fact, there exists $v_{\lambda_n} \in \mathcal{N}_\lambda^+$ such that $\mathcal{E}_{\lambda_n}^2=E_{\lambda_n}(v_{\lambda_n})$. Notice also that $t_{\lambda_n}^{n,+}(v_{\lambda_n}) > 0$ is well defined, see Proposition \ref{compar}. As a consequence, we mention that 
	$$\mathcal{E}_{\lambda_n}^1\leq {E}_{\lambda_n}(t_{\lambda_n}^{n,+}(v_{\lambda_n})v_{\lambda_n})<{E}_{\lambda_n}(v_{\lambda_n})=\mathcal{E}_{\lambda_n}^2.$$
This ends the proof.
\end{proof}

\nd{\bf The proof of Theorem \ref{theorem4} completed.}  As was mentioned before we consider a sequence $\lambda_j\in(\lambda_e,\lambda_n)$ such that $\lambda_j\uparrow \lambda_n$. It follows from Theorem \ref{theorm1} that there exist weak solutions $u_{\lambda_j}$ and $v_{\lambda_j}$ for Problem \eqref{eq1} for each $j \in \mathbb{N}$. Hence $u_{\lambda_j} \to u_{\lambda_n}$ and $v_{\lambda_j} \to v_{\lambda_n}$ in $X$, see Proposition \ref{incial}. The last assertion implies also that $u_{\lambda_n}$ and $v_{\lambda_n}$ are weak solutions for the Problem \eqref{eq1}. Furthermore, using the same ideas employed in the proof of Proposition \ref{exx}, we observe that $u_{\lambda_n} > 0$ and $v_{\lambda_n} > 0$ in $\mathbb{R}^N$.

\section{Appendix}
In this Appendix we collect some useful results for nonlocal elliptic problems involving Choquard equation. The next result was motivated by \cite[Lemma 6.2]{Muroz1}.
\begin{lem}\label{esti-I}
	Assume that ${N \geq 3}, \alpha \in (0, N)$ and $2_\alpha < p < 2^*_\alpha$ hold. Then there exists a constant $C>0$ such that
	$$\left|\frac{(I_\alpha*|u|^p)(x)}{I_\alpha(x)}-\|u\|_p^p\right|\leq C\|u\|_p^p, u \in X.$$
\end{lem}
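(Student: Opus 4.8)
The plan is to start from the elementary identity that makes the claim transparent. Since $I_\alpha(z)=A_\alpha(N)|z|^{-(N-\alpha)}$, for every $x\neq 0$ one has
\begin{equation*}
\frac{(I_\alpha*|u|^p)(x)}{I_\alpha(x)}-\|u\|_p^p=\Int\left(\frac{|x|^{N-\alpha}}{|x-y|^{N-\alpha}}-1\right)|u(y)|^p\,dy,
\end{equation*}
so everything reduces to estimating this last integral. Before doing that I would record the two integrability facts furnished by hypothesis $(Q)$: since $2_\alpha<p<2^*_\alpha\le 2^*$ and $X\hookrightarrow L^r(\Rn)$ for $r\in[1,2^*]$, we have $u\in L^p(\Rn)$ (so that $\|u\|_p<\infty$); and, because $2Np/(N+\alpha)\in[2,2^*]$ precisely when $p\in[2_\alpha,2^*_\alpha]$, we also have $|u|^p\in L^{2N/(N+\alpha)}(\Rn)$. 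The second fact is exactly what will let us bring the Hardy--Littlewood--Sobolev machinery of Lemma \ref{lemaHardy} and Remark \ref{rmHardy} to bear on the singular part of the kernel.

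Next I would split $\Rn$ into three regions according to the position of $y$ relative to $0$ and to $x$:
\begin{equation*}
A_1=\{\,|y|\le |x|/2\,\},\qquad A_2=\{\,|x-y|\le |x|/2\,\},\qquad A_3=\Rn\setminus(A_1\cup A_2).
\end{equation*}
On $A_1$ one has $|x-y|\ge |x|-|y|\ge |x|/2$, and on $A_3$ one has $|x-y|>|x|/2$ by definition, so on $A_1\cup A_3$ it holds $0\le |x|^{N-\alpha}/|x-y|^{N-\alpha}\le 2^{N-\alpha}$, whence $\big||x|^{N-\alpha}/|x-y|^{N-\alpha}-1\big|\le 2^{N-\alpha}$. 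Consequently the contributions of $A_1$ and $A_3$ are bounded by $2^{N-\alpha}\int_{A_1\cup A_3}|u|^p\,dy\le 2^{N-\alpha}\|u\|_p^p$, which already has the desired form; these two regions are entirely routine.

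It remains to control the contribution of $A_2=B_{|x|/2}(x)$, which is the ball carrying the singularity $y=x$ of the kernel; there $|x|^{N-\alpha}/|x-y|^{N-\alpha}\ge 2^{N-\alpha}\ge 1$, so this contribution is nonnegative and the task is to bound
\begin{equation*}
\int_{B_{|x|/2}(x)}\frac{|x|^{N-\alpha}}{|x-y|^{N-\alpha}}\,|u(y)|^p\,dy .
\end{equation*}
The idea is to use Hölder's inequality, pairing $|u|^p\in L^{2N/(N+\alpha)}$ with $|x-\cdot|^{-(N-\alpha)}$ over the ball, and to absorb the prefactor $|x|^{N-\alpha}$ by means of the inequality $|y|>|x|/2$ valid on $A_2$; this is the step where the full subcritical range $2_\alpha<p<2^*_\alpha$ is used in an essential way, following the scheme of \cite[Lemma 6.2]{Muroz1}. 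I expect this last estimate — taming the local singularity of $I_\alpha$ near $x$ while expressing the bound solely through global norms of $|u|^p$ — to be the main obstacle of the proof, the rest being elementary. Putting the three estimates together gives $\big|(I_\alpha*|u|^p)(x)/I_\alpha(x)-\|u\|_p^p\big|\le C\|u\|_p^p$ with $C=C(N,\alpha,p)>0$, which is the assertion.
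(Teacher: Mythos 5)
Your opening identity and the treatment of the regions $A_1$ and $A_3$ are correct, but the proof stops exactly where the content of the lemma lies: the singular region $A_2=B_{|x|/2}(x)$ is only announced, and the route you indicate for it does not work. If you pair $|u|^p\in L^{2N/(N+\alpha)}(\mathbb{R}^N)$ with the kernel by H\"older, the kernel has to be taken in $L^{2N/(N-\alpha)}(A_2)$, and
$|x-y|^{-(N-\alpha)\cdot\frac{2N}{N-\alpha}}=|x-y|^{-2N}$
is not integrable near $y=x$, so that pairing is void; the same exponent bookkeeping shows that an admissible Lebesgue exponent for $|u|^p$ coming from $X\hookrightarrow L^r(\mathbb{R}^N)$, $r\in[1,2^*]$, would require $p<2\alpha/(N-2)$, which is not guaranteed (and sometimes impossible) under $(Q)$. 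Absorbing the prefactor $|x|^{N-\alpha}$ through $|y|\geq |x|/2$ does not help either: it merely replaces it by the growing weight $|y|^{N-\alpha}$ inside the integral. In fact no bound of the $A_2$ contribution by $C\|u\|_p^p$ with $C$ independent of $u$ and $x$ can hold: take a fixed normalized bump translated to distance $d$ from the origin and evaluate at $x$ equal to its center; then the $A_2$ term is of order $d^{N-\alpha}\|u\|_p^p$, which is unbounded in $d$. So the step you yourself single out as ``the main obstacle'' is not only missing, it cannot be closed with the tools you list (global $L^p$ information on $u$ alone); some decay or local boundedness of $u$ together with largeness of $|x|$ must enter, which is precisely the regime in which the estimate is used later in the paper (for a regular solution and $x\in B_R(0)^c$).

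This is also where your argument diverges from the paper's. The paper does not decompose $\mathbb{R}^N$ at all: it quotes from \cite[Lemma 6.2]{Muroz1} the pointwise kernel estimate $\left|\,|x-y|^{-(N-\alpha)}-|x|^{-(N-\alpha)}\right|\leq C\,|x|^{-(N-\alpha)}$ and integrates it against $|u(y)|^p$, so the whole difficulty --- the behaviour of the kernel for $y$ close to $x$, i.e.\ exactly your region $A_2$ --- is packaged into that cited estimate and into the decay properties underlying it. In other words, what you defer is the entire substance of the lemma; to complete your proof you would have to either establish the quoted kernel estimate in the regime where it is applied, or keep your decomposition and control $A_2$ using decay/regularity of $u$ (as in the cited reference) rather than $\|u\|_p^p$ alone.
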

\begin{proof}
	Initially, we observe that there exists a constant $C>0$ such that
	$$\left|\frac{1}{|x-y|^{N-\alpha}}-\frac{1}{|x|^{N-\alpha}}\right|\leq \frac{C}{|x|^{N-\alpha}}.$$
	holds for all $x,y\in\mathbb{R}^N$, see \cite[Lemma 6.2]{Muroz1}.
	As a consequence, we obtain 
	\begin{eqnarray}
		\left|(I_\alpha*|u|^p)(x)-I_\alpha(x)\|u\|_p^p\right|&\leq&\Int |u|^p\left|\frac{1}{|x-y|^{N-\alpha}}-\frac{1}{|x|^{N-\alpha}}\right| dx \leq  C\frac{\|u\|_p^p}{|x|^{N-\alpha}}, u \in X.\nonumber
	\end{eqnarray}
This ends the proof.
\end{proof}
At this stage, we shall prove a regularity result for our main problem. More specifically, we consider the following result:
\begin{theorem} \label{regular} Suppose $V\in L^{\frac{N}{2}}_{loc}(\mathbb{R^N})$ and $1<q \leq 2^*$ and $p \in (2_\alpha, 2_\alpha^*)$.  Assume that $u\in X$ is a weak solution of Problem \eqref{eq1}. Then $u\in W^{2,r}_{loc}(\mathbb{R}^N)\cap C_{loc}^{1,\beta}(\mathbb{R}^N)$ for all $r \in (1, \infty)$ and for some $\beta \in (0, 1)$.
	\end{theorem}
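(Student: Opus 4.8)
The plan is to prove this by a bootstrap on the local integrability of $u$, using the Hardy--Littlewood--Sobolev inequality (Lemma \ref{lemaHardy}) to tame the nonlocal term and the classical Calderón--Zygmund and Sobolev embedding theorems to convert integrability of the right--hand side into $W^{2,r}$ and then $C^{1,\beta}$ regularity. Fix a ball $B=B_\rho(x_0)$ and work on $2B$. Write the equation as $-\Delta u=f$ in $\mathcal{D}'(\mathbb{R}^N)$ with
$$f=-V(x)u+\big(I_\alpha*|u|^p\big)|u|^{p-2}u+\lambda|u|^{q-2}u.$$
Several pieces are benign from the start. Since $u\in X\hookrightarrow L^r(\mathbb{R}^N)$ for every $r\in[1,2^*]$ and $p\in(2_\alpha,2^*_\alpha)\subset(1,2^*)$, one has $|u|^p\in L^1(\mathbb{R}^N)\cap L^{2^*/p}(\mathbb{R}^N)$. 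Splitting the kernel as $I_\alpha=I_\alpha\mathbf{1}_{B_1}+I_\alpha\mathbf{1}_{B_1^c}$ and using $I_\alpha\mathbf{1}_{B_1^c}\le A_\alpha(N)$ shows $\big(I_\alpha\mathbf{1}_{B_1^c}*|u|^p\big)\in L^\infty(\mathbb{R}^N)$; more quantitatively, Lemma \ref{esti-I} bounds $I_\alpha*|u|^p$ on $2B$ away from the singularity of the kernel, so the only delicate part of the nonlocal term is the near piece $I_\alpha\mathbf{1}_{B_1}*|u|^p$, which by Young/HLS lies in $L^{\theta}_{\mathrm{loc}}$ for an explicit $\theta=\theta(N,\alpha,p)$ (with $\theta=\infty$ when $p$ is small). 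The concave term is harmless: since $1<q\le 2^*$ one has $\big||u|^{q-2}u\big|\le 1+|u|^{2^*-1}$, so $\lambda|u|^{q-2}u\in L^\infty_{\mathrm{loc}}+L^{2N/(N+2)}_{\mathrm{loc}}$, and in the sublinear range $q<2$ it is simply dominated by $1+|u|$.

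The core is the iteration, and the obstruction is that $V(x)u$ with $V\in L^{N/2}_{\mathrm{loc}}$ --- and, when $p$ is close to $2^*_\alpha$, also the nonlocal term --- sits at the ``critical'' threshold, so a single application of $L^s$--elliptic estimates does not improve the exponent. I would therefore run a Brezis--Kato/Moser iteration: for $\beta\ge 1$, $M>0$, test the equation with $\varphi=\eta^2 u|u_M|^{2(\beta-1)}$, where $u_M=\min\{|u|,M\}$ and $\eta\in C_c^\infty(2B)$, $\eta\equiv 1$ on $B$. The gradient term dominates a constant multiple of $\|\nabla(\eta u|u_M|^{\beta-1})\|_2^2$; the term $\int\eta^2 V|u|^2|u_M|^{2(\beta-1)}$ is absorbed exactly as in the classical argument, splitting $V=V\mathbf{1}_{\{V>K\}}+V\mathbf{1}_{\{V\le K\}}$, using $\|V\mathbf{1}_{\{V>K\}}\|_{N/2}\to 0$ as $K\to\infty$ together with the Sobolev inequality applied to $\eta u|u_M|^{\beta-1}$; the concave contribution $\lambda\int\eta^2|u|^q|u_M|^{2(\beta-1)}$ has the favorable sign and is controlled the same way (its ``potential'' is bounded below $K$ when $q\ge 2$, and trivially when $q<2$); and the nonlocal contribution $\int\eta^2\big(I_\alpha*|u|^p\big)|u|^{p}|u_M|^{2(\beta-1)}$ is handled by first discarding the bounded far part (absorbed like an $L^\infty$ potential) and estimating the near part by Hardy--Littlewood--Sobolev, pairing one factor $|u|^p$ with the kernel and the other with $|u|^p|u_M|^{2(\beta-1)}$ and interpolating. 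The condition $p<2^*_\alpha$ is precisely what makes the resulting HLS exponents close, so that after choosing $K$ large the whole right--hand side is absorbed into the Dirichlet term with a genuine gain. Letting $M\to\infty$ and then iterating $\beta=\beta_0<\beta_1<\cdots\to\infty$ yields $u\in L^r_{\mathrm{loc}}(\mathbb{R}^N)$ for every $r<\infty$, and a final Moser step gives $u\in L^\infty_{\mathrm{loc}}(\mathbb{R}^N)$. I expect verifying that these HLS exponents genuinely leave room for absorption --- i.e.\ that the nonlocal term really behaves subcritically in the iteration in the full range $p\in(2_\alpha,2^*_\alpha)$ --- to be the main technical obstacle.

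Once $u\in L^\infty_{\mathrm{loc}}$, the rest is routine. Then $|u|^p\in L^\infty_{\mathrm{loc}}\cap L^1(\mathbb{R}^N)$, so splitting the kernel as before gives $I_\alpha*|u|^p\in L^\infty_{\mathrm{loc}}(\mathbb{R}^N)$ (indeed $C^{0,\gamma}_{\mathrm{loc}}$ for some $\gamma\in(0,1)$, by the standard modulus--of--continuity estimate for Riesz potentials of bounded compactly supported functions plus the bounded tail), whence $f\in L^r_{\mathrm{loc}}(\mathbb{R}^N)$ for every $r<\infty$. Writing $-\Delta u+u=f+u$ with $f+u\in L^r_{\mathrm{loc}}$ and applying interior Calderón--Zygmund estimates gives $u\in W^{2,r}_{\mathrm{loc}}(\mathbb{R}^N)$ for all $r\in(1,\infty)$; choosing $r>N$ and invoking the Sobolev embedding $W^{2,r}(B)\hookrightarrow C^{1,\beta}(\overline{B})$ with $\beta=1-N/r\in(0,1)$ yields $u\in C^{1,\beta}_{\mathrm{loc}}(\mathbb{R}^N)$, which completes the proof.
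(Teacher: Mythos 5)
Your overall route is essentially the paper's: a Brezis--Kato type scheme that pushes $u$ into $L^r_{loc}$ for every $r<\infty$, followed by interior Calder\'on--Zygmund estimates to get $W^{2,r}_{loc}$ and the embedding $W^{2,r}\hookrightarrow C^{1,\beta}$ for $r>N$. The difference is packaging. You re-run the Moser/Brezis--Kato iteration by hand with test functions $\eta^2u|u_M|^{2(\beta-1)}$ and propose to absorb the Choquard term inside the iteration by an HLS pairing, flagging precisely that exponent bookkeeping as the unresolved point. The paper avoids iterating on the nonlocal term altogether: it rewrites the equation as $-\Delta u=a(x)(1+|u|)$ with $a=g(\cdot,u)/(1+|u|)$, checks once that $a\in L^{N/2}_{loc}(\mathbb{R}^N)$, and then quotes the standard lemma (\cite[Lemma B.3]{struwe}) for the full $L^r_{loc}$ gain. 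The Choquard contribution to $a$ is the potential $W=(I_\alpha*|u|^p)|u|^{p-2}$, and a single H\"older plus Hardy--Littlewood--Sobolev computation (Lemma \ref{lemaHardy}), namely $\|W\|_{L^{N/2}(B_R)}\le\|I_\alpha*|u|^p\|_{L^{a}}\,\||u|^{p-2}\|_{L^{2^*/(p-2)}}$ with $\frac1a=\frac{p}{2^*}-\frac{\alpha}{N}$ (when $2^*/p<N/\alpha$; otherwise $I_\alpha*|u|^p$ is even better), shows that the admissibility condition $\frac1a+\frac{p-2}{2^*}\le\frac2N$ is exactly $p\le 2^*_\alpha$. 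So your expectation that $p<2^*_\alpha$ is precisely the subcriticality needed is correct, and you can discharge your ``main technical obstacle'' by this potential reduction (treat $W$ like $V$ in the Brezis--Kato splitting $W\mathbf{1}_{\{W>K\}}+W\mathbf{1}_{\{W\le K\}}$) rather than by interpolating the convolution term anew at each step of the iteration; after that, your $L^\infty_{loc}$, Calder\'on--Zygmund and Sobolev steps go through verbatim.

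One caveat, which affects your sketch and the paper's own write-up equally: the computation above uses the factor $|u|^{p-2}$ as a nonnegative power, i.e.\ it needs $p\ge2$, while the admissible range $(2_\alpha,2^*_\alpha)$ always contains exponents $p<2$ (since $2_\alpha<2$). For $p<2$ the pointwise bound $|u|^{p-1}/(1+|u|)\le\min(|u|^{p-1},|u|^{p-2})$ only gives $a\in L^{N/2}_{loc}$ from the a priori regularity $u\in L^2\cap L^{2^*}$ under the extra restriction $p(N-2)\le 4+2\alpha$, which holds automatically when $N\le 4+\alpha$ but can fail for large $N$ and small $\alpha$; in that regime one must either perform a preliminary Calder\'on--Zygmund bootstrap (the Choquard term then has low growth, so it raises the integrability of $u$ before the critical $V$-term is handled by Brezis--Kato), or let the integrability of $I_\alpha*|u|^p$ improve along the iteration, which your hand-run scheme in fact permits. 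This case is worth writing out explicitly, since the paper's H\"older exponent $r=4/((N-2)(p-2))$ is only meaningful for $p>2$.
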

\begin{proof} Let $u\in X $ be a weak solution of Problem \eqref{eq1}, that is, we have that
$$	-\Delta u+V(x)u=(I_\alpha*|u|^p)|u|^{p-2}u+\lambda|u|^{q-2}u,\ \  in\ \  \mathbb{R}^N.$$
Here we define 
\begin{equation}
a(x)=
\dfrac{g(x,u)}{1+|u|}.
\end{equation}
where
$g(x,u)= (I_\alpha*|u|^p)|u|^{p-2}u+\lambda|u|^{q-2}u-V(x)u, x \in \mathbb{R}^N$.
Now we rewrite the nonlocal problem given just above as follows
$$-\Delta u=a(x)(1+|u|), x \in \mathbb{R}^N.$$ 
It is not hard to see that 
\begin{eqnarray}\label{ee}
\lambda |u|^{q-1}&\leq& \left\{\begin{array}{rcl}\lambda(1+|u|), \, \mbox{for} \, \, q\in(1,2),\\[1ex]
\lambda|u|^{q-2}(1+|u|), \, \, \mbox{for} \,\, q\in [2,2^*],
\end{array}\right.\\[1ex]
V(x)|u|&\leq& V(x)(1+|u|),\\[1ex]
(I_\alpha*|u|^p)|u|^{p-1}&\leq& (I_\alpha*|u|^p)|u|^{p-2} (1+|u|).
\end{eqnarray}
As a consequence, we also see that 
\begin{equation}\label{ei}
|a(x)|^{\frac{N}{2}}\leq \left\{\begin{array}{rcl}
&[(I_\alpha*|u|^p)|u|^{p-2}+\lambda+V(x)]^{\frac{N}{2}},\, \, \mbox{for} \, \,  q \in (1,2),\\[1ex]
&[(I_\alpha*|u|^p)|u|^{p-2}+\lambda|u|^{q-2}+V(x)]^{\frac{N}{2}},\, \, \mbox{for} \, \,  q \in [2,2^*].
\end{array}
\right.
\end{equation}
Now we shall split the proof into three cases. In the first one we assume that that $1 \leq (q-2)N/2$. Notice that $(q-2)N/2 \leq 2^*$ where $2^* = 2 N /(N -2)$ for each $N \geq 3$. As a consequence, by using the Sobolev embedding $H^1(\mathbb{R}^N)\hookrightarrow L_{loc}^r(\mathbb{R}^N), r \in [1, 2^*], 2^* = 2 N/(N - 2)$, we deduce that $\lambda|u|^{q-2}\in L_{loc}^{N/2}(\mathbb{R}^N)$. In the second case we assume that 
$0 \leq (q-2)\frac{N}{2} < 1$. For this case we observe that  
\begin{eqnarray}\label{eee}
\lambda |u|^{(q-2)N/2}&\leq& \left\{\begin{array}{rcl} \lambda, \, \mbox{for} \, \, |u| \leq 1,\\[1ex]
\lambda |u|, \, \, \mbox{for} \,\, |u| \geq 1
\end{array}\right.
\end{eqnarray}
Therefore, by using the last estimate, we ensure that $\lambda|u|^{q-2}\in L_{loc}^{N/2}(\mathbb{R}^N)$ for each $q$ satisfying $0 \leq (q-2)\frac{N}{2} < 1$. It remains to consider the third case where  $1 < q < 2$ holds true. For this case, by using \eqref{ee}, we mention that $\lambda|u|^{q-1} \leq \lambda (1 + |u|)$.  Furthermore, we claim that 
$\ (I_\alpha*|u|^p)|u|^{p-2} \in L_{loc}^{N/2}(\mathbb{R}^N)$ holds. Assuming the claim given just above we infer that $a \in L_{loc}^{N/2}(\mathbb{R}^N)$. This can be done using the claim together with \eqref{ei} and \eqref{eee}. 
In view of \cite[Lemma B3]{struwe} we conclude that $u \in W_{loc}^{2,r}(\mathbb{R}^N)$ holds
true for any $r >  1$. Here was used the Calderon-Zygmund Theorem, see \cite[Theorem 9.9]{GT}. Now, by using the Sobolev embedding, we infer that $u\in C^{1,\beta}_{loc}(\mathbb{R}^N)$ for some $\beta \in (0,1)$.

It remains to prove that $\ (I_\alpha*|u|^p)|u|^{p-2} \in L_{loc}^{N/2}(\mathbb{R}^N)$. In order to do that we define $b: \mathbb{R}^N \to \mathbb{R}$ given by
\begin{equation}\label{b}
	b(x)=\left\{\begin{array}{rcl}&\Int(I_\alpha*|u|^p)|u|^{p-2} dx,&\, \, \mbox{for} \, \, u\neq 0,\\
&0&\, \,  \mbox{for} \,\, u=0.
\end{array}
\right.
\end{equation}
Hence, using the H\"older inequality, for any $B_R(0) \subset\mathbb{R}^N, R > 0,$ we obtain
$$
\displaystyle\int_{B_R(0)}|b(x)|^\frac{N}{2}dx\leq \left(\displaystyle\int_{B_R(0)}\left(I_\alpha*|u|^p\right)^{\frac{N}{2}r'}dx\right)^{\frac{1}{r'}}||u||_{2^*}^{\frac{(p -2)N}{2 \cdot 2^*}},\ \ r=\dfrac{4}{(N-2)(p-2)}> 1.
$$
The last estimate can be rewritten in the following form
$$
\displaystyle\int_{B_R(0)}|b(x)|^\frac{N}{2}dx\leq \left(\displaystyle\int_{B_R(0)}\left(I_\alpha*|u|^p\right)^{\frac{2*}{2^*-p}}dx\right)^{\frac{1}{r'}}||u||_{2^*}^{\frac{2^*}{2^*-p}}.
$$
For our purpose is enough to ensure that
$$
 \displaystyle\int_{B_R(0)}\left((I_\alpha*|u|^p)\right)^{\frac{2*}{2^*-p}}dx<+\infty
$$
In view of Lemma \ref{lemaHardy} the last integral is finite provided that $|u|^p\in L^s(\mathbb{R^N})$ where $s\in (1, N/\alpha)$ satisfies the following condition $\frac{Ns}{N-\alpha s}=\frac{2^*}{2^*-p}$. The last identity can be written in the following form $s=\frac{2N}{4+2\alpha-(N-2)(p-2)}.$ Under these conditions we infer that 
$$\displaystyle\int_{B_R(0)}|u|^{ps}dx=\displaystyle\int_{B_R(0)}|u|^{\frac{2Np}{4+2\alpha-(N-2)(p-2)}}dx<+\infty.$$ 
Here also used the fact that 
$$
\begin{array}{rcl}
1 \leq \dfrac{2Np}{4+2\alpha-(N-2)(p-2)}\leq 2^*=\dfrac{2N}{N-2}.
\end{array}$$
The last estimates are verified due to the fact that $2_\alpha < p < 2_\alpha^* < 2^*$. 
This finishes the proof.
\end{proof}

\end{document}